	\pgfplotsset{compat=newest}
\colorlet{bl}{blue!70}
\colorlet{or}{orange!70}
\newtheorem{theorem}{Theorem}[section]
\newtheorem{corollary}[theorem]{Corollary}
\newtheorem{lemma}[theorem]{Lemma}
\newtheorem{proposition}[theorem]{Proposition}
\newtheorem{definition}{Definition}[section]
\newtheorem{remark}{Remark}[section]
\newtheorem{conjecture}{Conjecture}[section]
\newtheoremstyle{example}{3pt}{3pt}{}{0pt}{\bfseries}{. }{ }{}
\theoremstyle{example}
\newtheorem{example}{Example}
\newtheorem{note}{Note}[section]
\let\emptyset\varnothing    %%with a nicer one.
\title{Accessibility of the Boundary of the Thurston Set}
\date{\today}
\address{ %
Butler University Department of Mathematics, Statistics, and Actuarial Science\\
4600 Sunset Avenue\\
Indianapolis, Indiana 46208\\
 United States }
\email{ssilvestri@butler.edu}
\address{ %
IUPUI Department of Mathematical Sciences\\
402 North Blackford Street\\
Indianapolis, Indiana 46202\\
 United States }
\email{rperez@iupui.edu}
\begin{document}

\begin{abstract}
    Consider two objects associated to the Iterated Function System (IFS) $\{1+\lambda z,-1+\lambda z\}$: the locus $\mathcal{M}$ of parameters $\lambda\in\mathbb{D}\setminus\{0\}$  for which the corresponding attractor is connected; and the locus $\mathcal{M}_0$ of parameters for which the related attractor contains $0$. The set $\mathcal{M}$ can also be characterized as the locus of parameters for which the attractor of the IFS $\{1+\lambda z, \lambda z, -1+\lambda z\}$ contains $\lambda^{-1}$. Exploiting the asymptotic similarity of $\mathcal{M}$ and $\mathcal{M}_0$ with the respective associated attractors, we give sufficient conditions on $\lambda\in\partial\mathcal{M}$ or $\partial\mathcal{M}_0$ to guarantee it is path accessible from the complement $\mathbb{D}\setminus\mathcal{M}$.
\end{abstract}

\keywords{Iterated Function System, Fractal Geometry, Self-Affine Fractals, Zero-Sets of Power Series}
\maketitle

\section{Introduction}
For any $\lambda$ in the punctured disk $\mathbb{D}^*:=\mathbb{D}\setminus\{0\}$ the maps
\[
    \mathfrak{s}_-(z):=-1+\lambda z, \qquad \mathfrak{s}_{\mathtt{O}}(z):=\lambda z, \qquad \mathfrak{s}_+(z):=1+\lambda z
\]
are contraction similarities in all of $\mathbb{C}$. We associate to $\lambda$ the compact sets 
\[\mathsf{A}_\lambda:=\left\{\sum_{n=0}^\infty a_n\lambda^n ~\bigg|~a_n\in\{-1,+1\}\right\}\text{ and }\widetilde{\mathsf{A}}_\lambda:=\left\{\sum_{n=0}^\infty a_n\lambda^n ~\bigg|~a_n\in\{-1,0,+1\}\right\}.\]
These sets are the attractors of the iterated function systems or IFS $\{\mathfrak{s}_-,\mathfrak{s}_+\}$ and $\{\mathfrak{s}_-,\mathfrak{s}_{\mathtt{O}},\mathfrak{s}_+\}$, respectively; that is, the unique non-empty compact sets satisfying \[\mathsf{A}_\lambda=\mathfrak{s}_-(\mathsf{A}_\lambda)\cup \mathfrak{s}_+(\mathsf{A}_\lambda)~\text{ and }~\widetilde{\mathsf{A}}_\lambda=\mathfrak{s}_-(\widetilde{\mathsf{A}}_\lambda)\cup\mathfrak{s}_{\mathtt{O}}(\widetilde{\mathsf{A}}_\lambda)\cup\mathfrak{s}_+(\widetilde{\mathsf{A}}_\lambda).\]

Both attractors are symmetric about $0$, and clearly $\mathsf{A}_\lambda\subseteq\widetilde{\mathsf{A}}_\lambda$. It is advantageous to study them together because, in contrast to what happens with the Mandelbrot set, the parameter regions 
\[
\mathcal{M}:=\{\lambda\in\mathbb{D}~|~\mathsf{A}_\lambda \mbox{ is connected} \}~\mbox{ and }~\mathcal{M}_0:=\{\lambda\in\mathbb{D}~|~0\in\mathsf{A}_\lambda \},
\]
associated to the $2$-map IFS $\{\mathfrak{s}_-,\mathfrak{s}_+\}$, do not coincide (see Figure~\ref{fig:MandM0}). However, there is an interesting connection between the two sets, because 
\[
  \mathcal{M} = \big\{\lambda\in\mathbb{D}~|~0\in\mathfrak{s}_-(\widetilde{\mathsf{A}}_\lambda)\cap\mathfrak{s}_{\mathtt{O}}(\widetilde{\mathsf{A}}_\lambda)\cap\mathfrak{s}_+(\widetilde{\mathsf{A}}_\lambda)\big\} \text{ while }
  \mathcal{M}_0 = \big\{\lambda\in\mathbb{D}~|~0\in\mathfrak{s}_-(\mathsf{A}_\lambda)\cap\mathfrak{s}_+(\mathsf{A}_\lambda)\big\}
\]
(cf. Note~\ref{note:2}). Following \cite{BDLW}, we refer to $\mathcal{M}_0$ as the {\em Thurston set}.
\begin{figure}
\centering
    \includegraphics[scale=0.282]{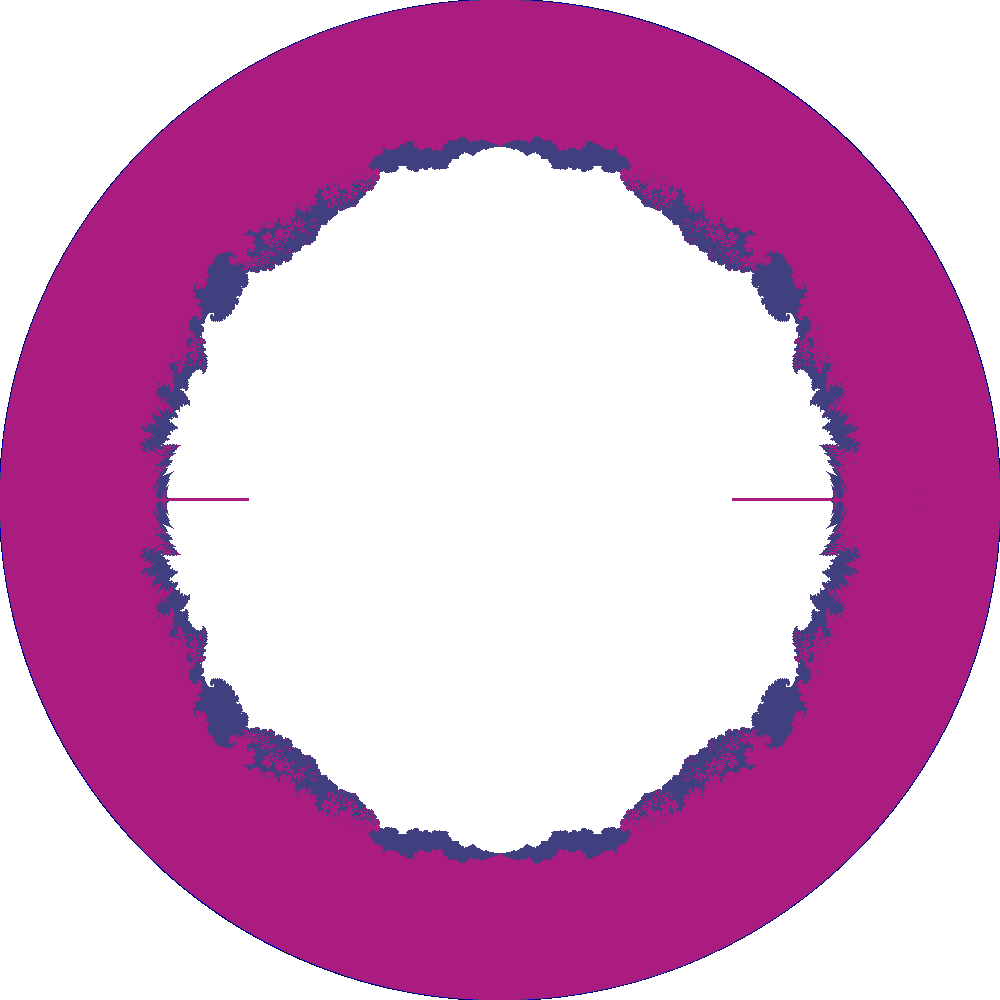}
\caption{The connectivity locus $\mathcal{M}$ and its subset $\mathcal{M}_0$, in blue and fuchsia respectively. The two spikes on the real line start at $\pm1/2$.}
\label{fig:MandM0}
\end{figure}

Interest in these IFS families spiked recently, after Tiozzo \cite{Ti} (inspired by a conjecture of Thurston \cite{Th}) proved that $\mathcal{M}_0$ equals the closure of the set of Galois conjugates of entropies of superattracting real quadratic polynomials. Both $\mathcal{M}$ and $\mathcal{M}_0$ were first introduced by Barnsley and Harrington \cite{BarHar} in the mid 1980s. In 1988--92, Bousch \cite{Bo1,Bo2} proved $\mathcal{M}$ and $\mathcal{M}_0$ are connected and locally connected. In 2002, Bandt \cite{Ba} proved the existence of a hole in $\mathcal{M}$, and conjectured that there are in fact infinitely many holes. A different point of view was presented in 1993, where Odlyzko and Poonen \cite{OdPo} proved connectedness of the closure of, and conjectured the presence of holes in, the related set of roots of polynomials with $0,1$ coefficients. In 2014 Calegari, Koch, and Walker \cite{CKW} gave a positive answer to the conjecture, for both $\mathcal{M}$ and $\mathcal{M}_0$, and in fact constructed infinite sequences of holes accumulating at certain parameters in $\partial\mathcal{M}$ (see Figure~\ref{fig:buried}). These are buried points of $\partial\mathcal{M}$, i.e. not path accessible from the complement $\mathbb{D}\setminus\mathcal{M}$.

The question of classifying these connected components of $\mathbb{D}\setminus\mathcal{M}$ is still open; however our results are a step in that direction. Let $\mathcal{P}$ denote the set of all normalized power series with coefficients in $\{-1,0,+1\}$, i.e. \[\mathcal{P}:=\left\{f(z)=\sum_{j=0}^\infty c_jz^j~\bigg|~c_j\in\{-1,0,+1\},~c_0=1\right\}.\]
With this notation our main results are as follows:
\begin{theorem}
Suppose $f$ is the unique power series in $\mathcal{P}$ that vanishes at $\lambda\in\mathcal{M}\setminus\mathbb{R}$ with $\left|\lambda\right|\leq2^{-1/2}$. If $f$ has finitely many zero coefficients and its Taylor polynomials satisfy certain conditions then $\lambda$ is on the boundary of a hole of $\mathcal{M}$.
\end{theorem}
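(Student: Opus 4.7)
The plan is to translate the boundary accessibility question into the existence of power series in $\mathcal{P}$ vanishing at $\lambda$, and then use the Taylor truncations $f_n$ of $f$ to produce nearby exterior parameters $\lambda_n\to\lambda$ which I would then connect by explicit analytic arcs.

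First, I would reformulate membership in $\mathcal{M}$ in terms of $\mathcal{P}$. From the identity $\mathcal{M}=\{\lambda\in\mathbb{D}\,|\,0\in\mathfrak{s}_-(\widetilde{\mathsf{A}}_\lambda)\cap\mathfrak{s}_{\mathtt{O}}(\widetilde{\mathsf{A}}_\lambda)\cap\mathfrak{s}_+(\widetilde{\mathsf{A}}_\lambda)\}$ recorded in the introduction, the condition $0\in\mathfrak{s}_-(\widetilde{\mathsf{A}}_\lambda)$ is equivalent to $1/\lambda=\sum_{j\ge 0}a_j\lambda^j$ for some choice of $a_j\in\{-1,0,+1\}$, which after rearranging produces a series $f(z)=1-\sum_{j\ge 1}a_{j-1}z^j\in\mathcal{P}$ with $f(\lambda)=0$; conversely, any zero in $\mathbb{D}$ of an element of $\mathcal{P}$ lies in $\mathcal{M}$. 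The hypothesis $|\lambda|\le 2^{-1/2}$, combined with the finiteness of zero coefficients, should then supply uniqueness of this $f$ via a Garsia-type separation estimate applied to the difference of two candidate series (which would have coefficients in $\{-2,-1,0,1,2\}$).

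Second, I would invoke the Taylor polynomials $f_n(z)=\sum_{j=0}^{n}c_jz^j$. Since $|f_n(\lambda)|=\bigl|\sum_{j>n}c_j\lambda^j\bigr|\le|\lambda|^{n+1}/(1-|\lambda|)$, Rouch\'e's theorem produces a simple root $\lambda_n$ of $f_n$ close to $\lambda$, with $\lambda_n\to\lambda$ geometrically. The ``certain conditions'' alluded to in the statement should be exactly the combinatorial restrictions on the prefix $(c_0,\dots,c_n)$ ensuring that no other series in $\mathcal{P}$ vanishes at $\lambda_n$: given such restrictions, the earlier reformulation forces $\lambda_n\notin\mathcal{M}$, providing a discrete sequence of exterior parameters limiting on $\lambda$.

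Third, I would promote this discrete sequence to a continuous arc by applying the implicit function theorem to the one-parameter family $t\mapsto f_n+t(f_{n+1}-f_n)$ near the simple root $\lambda_n$; this yields short analytic paths whose concatenation gives $\gamma\colon[0,1)\to\mathbb{D}$ with $\gamma(t)\to\lambda$. The main obstacle is showing that $\gamma$ never re-enters $\mathcal{M}$ through an intermediate hole. To certify this I would invoke the asymptotic similarity between $\mathcal{M}$ and $\widetilde{\mathsf{A}}_\lambda$ near $\lambda$ promised in the abstract: the constructed arcs have length decaying like $|\lambda|^n$, so once $n$ is large enough they fit inside a single hole of $\mathcal{M}$ accumulating on $\lambda$. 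Making this localization rigorous, and identifying the precise Taylor-polynomial conditions that isolate a single branch of exterior points, is in my view where the core of the argument will lie.
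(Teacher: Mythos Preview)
Your second step contains a fatal error: a root $\lambda_n$ of the Taylor polynomial $f_n$ is automatically \emph{in} $\mathcal{M}$, not outside it. Indeed, $f_n$ extended by trailing zeros is itself an element of $\mathcal{P}$ vanishing at $\lambda_n$, so by your own reformulation $\lambda_n\in\mathcal{M}$. No combinatorial condition on the prefix $(c_0,\dots,c_n)$ can change this. The third step compounds the problem: the interpolated polynomials $f_n+t(f_{n+1}-f_n)$ have a coefficient $tc_{n+1}\notin\{-1,0,1\}$, so their roots carry no information about membership in $\mathcal{M}$, and there is no reason the resulting arc avoids $\mathcal{M}$. More fundamentally, the Hausdorff-metric asymptotic similarity you plan to invoke at the end is too weak to certify that a given point near $\lambda$ lies \emph{outside} $\mathcal{M}$; as the paper remarks after Theorem~\ref{thm:asymSol}, points near $\zeta$ not in $\widetilde{\mathsf{A}}_\lambda$ need not correspond to points near $\lambda$ not in $\mathcal{M}$. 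One needs the sharper statement of Calegari--Koch--Walker (Theorem~\ref{thm:asymCKW}\textbf{(ii)}): if $\mathcal{F}_\lambda=\{f\}$ and $C$ is small and not in the scaled translate of $\widetilde{\mathsf{A}}_\lambda$, then $C\lambda^{pn}+\lambda\notin\mathcal{M}$ for all large $n$.

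The paper's argument exploits this by working entirely in the dynamical plane rather than in parameter space. Using the eventually periodic itinerary of $\zeta=-\lambda^{-(\ell+1)}f_\ell(\lambda)\in\widetilde{\mathsf{A}}_\lambda$, it builds for each $n\ge 0$ an open disk $B_n$ tangent to the level-$n$ instar $\widetilde{\mathsf{I}}^n$, centered at the reflection of the node $\zeta_n$ through $\zeta$, with radius governed by $|f_{\ell+1+n}(\lambda)|$. Conditions \textbf{(i)}, \textbf{(ii)}, \textbf{(iii)} of Theorem~\ref{thm:bdholem} are precisely what make $B_n$ nonempty, make $B_n\cap B_{n+1}\neq\emptyset$, and keep $B_n$ disjoint from every other nodal disk of $\widetilde{\mathsf{I}}^n$; the rational type $(\ell,p)$ then makes the chain $\lambda^p$-self-similar about $\zeta$, so only the first $p$ links need checking. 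The resulting connected chain $\bigcup_{n\ge 0}B_n\subset\mathbb{C}\setminus\widetilde{\mathsf{A}}_\lambda$ accumulates at $\zeta$, and Theorem~\ref{thm:asymCKW}\textbf{(ii)} transports it to a connected open subset of $\mathbb{D}\setminus\mathcal{M}$ accumulating at $\lambda$. Note that the Taylor polynomials enter only through their \emph{values} $f_{\ell+1+n}(\lambda)$, which measure node-to-$\zeta$ distances inside the attractor, never through their roots.
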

Furthermore, with minor adjustments to the conditions on the Taylor polynomials,
\begin{theorem}
Suppose $f$ is the unique power series in $\mathcal{P}$ that vanishes at $\lambda\in\mathcal{M}\setminus\mathbb{R}$ with $\left|\lambda\right|\leq2^{-1/2}$. If $f$ has no zero coefficients and its Taylor polynomials satisfy certain conditions then $\lambda$ is on the boundary of a hole of $\mathcal{M}_0$.
\end{theorem}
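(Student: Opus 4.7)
The plan is to parallel the proof of the preceding theorem, replacing the asymptotic similarity model for $\mathcal{M}$ (which involves $\widetilde{\mathsf{A}}_\lambda$ and therefore allows finitely many uses of the middle map $\mathfrak{s}_{\mathtt{O}}$) by the analogous model for $\mathcal{M}_0$, which is the attractor $\mathsf{A}_\lambda$ itself. The hypothesis that $f$ has no zero coefficients is exactly what keeps us in the ``pure $\pm 1$'' world of $\mathsf{A}_\lambda$: since $c_0=1$ and $c_j\in\{\pm 1\}$ for all $j\geq 1$, the identity $f(\lambda)=0$ immediately witnesses $0\in\mathsf{A}_\lambda$, so $\lambda\in\mathcal{M}_0$ in the first place.

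First I would invoke the asymptotic similarity established earlier in the paper: at a boundary parameter $\lambda\in\partial\mathcal{M}_0$, the rescalings $(\mathcal{M}_0-\lambda)/\lambda^n$ converge in the Hausdorff metric to a translate of $\mathsf{A}_\lambda$, with $0\in\mathsf{A}_\lambda$ corresponding to $\lambda\in\mathcal{M}_0$. This reduces the claim to showing that $0$ is path accessible from $\mathbb{C}\setminus\mathsf{A}_\lambda$, after which accessibility lifts to $\lambda$ being accessible from $\mathbb{D}\setminus\mathcal{M}_0$. To see the local geometry, I would exploit the Taylor polynomials $f_N(z)=\sum_{j=0}^N c_jz^j$; evaluating at $\lambda$ gives
\[
f_N(\lambda)=-\lambda^{N+1}\sum_{k=0}^{\infty}c_{N+1+k}\lambda^k \,\in\, -\lambda^{N+1}\mathsf{A}_\lambda,
\]
which supplies explicit nearby points of $\mathsf{A}_\lambda$ clustering at $0$ at rate $|\lambda|^N$. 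The ``certain conditions'' on the $f_N$ are to be formulated so that, at every level, the partial sums $f_N(\lambda)$ are confined to a fixed sector, while an open wedge on the opposite side of $0$ is provably disjoint from $\mathsf{A}_\lambda$. The contraction hypothesis $|\lambda|\leq 2^{-1/2}$ (equivalently $|\lambda|^2\leq 1/2$) is used here to ensure that the two sub-pieces $\mathfrak{s}_\pm(\mathsf{A}_\lambda)$ are sufficiently contracted for these wedge estimates to propagate to every iterate of the IFS.

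Next, I would use the self-similarity $\mathsf{A}_\lambda=\mathfrak{s}_-(\mathsf{A}_\lambda)\cup\mathfrak{s}_+(\mathsf{A}_\lambda)$ together with the sector estimates to produce an explicit arc $\gamma\subset\mathbb{C}\setminus\mathsf{A}_\lambda$ terminating at $0$, built as a telescoping chain of arcs living in successive images of the ``gap wedge'' under compositions of $\mathfrak{s}_-$ and $\mathfrak{s}_+$ prescribed by the address of $0$ given by $f$. Transporting $\gamma$ through the asymptotic similarity yields a path in $\mathbb{D}\setminus\mathcal{M}_0$ landing at $\lambda$.

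The main obstacle will be the transfer of complementary topology across the Hausdorff limit: a complementary component of the limit set need not arise as the limit of complementary components of the approximations, so accessibility in the model does not a priori imply accessibility in $\mathcal{M}_0$. The explicit open wedges supplied by the Taylor polynomial conditions are precisely what handles this issue; they contain fixed open balls that persist uniformly at all scales and hence survive pullback to the approximating parameter sets, guaranteeing not merely a buried accessibility but a genuine bounded component (hole) of $\mathbb{D}\setminus\mathcal{M}_0$ with $\lambda$ on its boundary.
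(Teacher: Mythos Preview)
Your broad outline---parallel the preceding proof, replacing $\widetilde{\mathsf{A}}_\lambda$ by $\mathsf{A}_\lambda$---is exactly what the paper does, but several of your concrete steps are off.

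First, the asymptotic model point is not $0$. Solomyak's theorem (and its $\mathcal{M}_0$ variant) says that $\mathcal{M}_0$ about $\lambda$ is modeled on $\mathsf{A}_\lambda$ about $\zeta=-\lambda^{-(\ell+1)}f_\ell(\lambda)$, the point with periodic itinerary $(c_{\ell+1}\cdots c_{\ell+p})^\infty$. Your attempt to access $0\in\mathsf{A}_\lambda$ targets the wrong point. (It is true that $0=\xi$ is the overlap point and $\zeta=\mathfrak{s}_{\mathtt{\bm a}|\ell}^{-1}(0)$, so the two are related by a similarity; but the chain has to be built at $\zeta$ and shown to avoid \emph{all} of $\mathsf{A}_\lambda$, not just the sub-piece $\mathsf{A}_\lambda^{\mathtt{\bm a}|\ell}$, in order to feed into the transfer below.)

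Second, the paper does not use wedges or sectors. It constructs an explicit chain of open disks $B_n$ centered at $\omega_n=2\zeta-\zeta_n$ (the reflection of the node $\zeta_n$ about $\zeta$) with radius $r_n=2|\zeta-\zeta_n|-|\lambda|^{n+1}/(1-|\lambda|)$, tangent to the nodal disk $\mathsf{D}^{\mathtt{\bm b}|n}$. The ``certain conditions'' are the specific inequalities \textbf{(i)}, \textbf{(ii)}, \textbf{(iii')} guaranteeing respectively that $r_n>0$, that $B_n\cap B_{n+1}\neq\emptyset$, and that $B_n$ misses every other nodal disk of the instar $\mathsf{I}^n$. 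The rational-type hypothesis (which you do not mention) is essential: it reduces these to finitely many checks for $0\le n\le p-1$, after which the identity $\lambda^p(\omega_n-\zeta)=\omega_{n+p}-\zeta$ propagates them to all $n$.

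Third, and most seriously, you correctly flag that Hausdorff convergence alone does not transfer complementary topology, but your proposed fix (``wedges contain fixed open balls that persist uniformly at all scales'') is not a proof. The paper instead invokes Theorem~\ref{thm:asymCKW}\textbf{(ii)} (Calegari--Koch--Walker), which is strictly stronger than Solomyak's asymptotic similarity: for every $C\notin\tfrac{\lambda^{\ell+1}}{f'(\lambda)}(\mathsf{A}_\lambda-\zeta)$ with $|C|$ small, the actual parameter $C\lambda^{pn}+\lambda$ lies outside $\mathcal{M}_0$ for all large $n$. The $\lambda^p$-forward-invariance of the translated chain $\bigcup_n(B_n-\zeta)$ is precisely what makes it compatible with this statement. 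Incidentally, the bound $|\lambda|\le 2^{-1/2}$ is not used for any wedge propagation; it is a standing hypothesis under which Theorems~\ref{thm:asymSol} and~\ref{thm:asymCKW} hold.
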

For the complete formulation, see Theorems~\ref{thm:bdholem} and \ref{thm:bdholem0}.

Further motivation to study points of $\partial\mathcal{M}_0\cap\partial\mathcal{M}$ is provided by the results in \cite{ERS,HaisP}. It is possible to derive a quadratic-like map $g$ from the maps $\mathfrak{s}_-$ and $\mathfrak{s}_+$, so that the dynamics of $g$ on the limit set $\mathsf{A}_\lambda$ is quasisymmetrically conjugate to the one of $z^2+c$ on the Julia set for some Misiurewicz parameter $c$. In fact, \begin{conjecture}
There is a set of parameters $\lambda$, dense in $(\partial\mathcal{M}\cap\partial\mathcal{M}_0)\setminus\mathbb{R}$, so that the dynamics of $g$ on the limit set $\mathsf{A}_\lambda$ is quasisymmetrically conjugate to that of $z^2+c$ on its Julia set for some Misiurewicz parameter $c$.
\end{conjecture}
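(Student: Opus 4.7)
My approach is to (i) build the quadratic-like map $g$ and reformulate the Misiurewicz-conjugacy condition as a combinatorial condition on the address of $0$, (ii) use Theorems~\ref{thm:bdholem} and \ref{thm:bdholem0} to certify that such parameters actually lie in $\partial\mathcal{M}\cap\partial\mathcal{M}_0$, and (iii) approximate a generic target $\lambda_0\in(\partial\mathcal{M}\cap\partial\mathcal{M}_0)\setminus\mathbb{R}$ by the special parameters produced in (i)--(ii).

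For step (i), pick a topological disk $U$ containing $\mathsf{A}_\lambda$ and mapped into itself by both $\mathfrak{s}_\pm$. When $\lambda\in\partial\mathcal{M}$ the images $\mathfrak{s}_-(U)$ and $\mathfrak{s}_+(U)$ just touch, and the common expanding inverse is a degree-two branched map $g\colon \mathfrak{s}_-(U)\cup\mathfrak{s}_+(U)\to U$ with a unique critical value. Following \cite{ERS,HaisP}, this is quadratic-like with connected Julia set $\mathsf{A}_\lambda$, and the address map $\{-1,+1\}^{\mathbb{N}}\to\mathsf{A}_\lambda$ intertwines $g$ with the shift. Hence quasisymmetric equivalence to some $z^2+c$ with $c$ Misiurewicz reduces to the critical point of $g$ having a strictly preperiodic address; since the critical orbit is carried by the orbit of $0$ under the IFS, this amounts to $\lambda$ being a root of some eventually periodic $f\in\mathcal{P}$. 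Step (ii) is then almost immediate: given such $f$ with unique root $\lambda\in\mathbb{D}^{*}$ satisfying $|\lambda|\leq 2^{-1/2}$ and $\lambda\notin\mathbb{R}$, the Taylor-polynomial hypotheses of Theorems~\ref{thm:bdholem} and \ref{thm:bdholem0} act as gatekeepers; when both sets of hypotheses hold, $\lambda$ is path-accessible from $\mathbb{D}\setminus\mathcal{M}$ and from $\mathbb{D}\setminus\mathcal{M}_0$, so $\lambda\in\partial\mathcal{M}\cap\partial\mathcal{M}_0$.

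For step (iii), a target $\lambda_0$ comes equipped with a unique $f_0\in\mathcal{P}$ vanishing at it. I would truncate $f_0$ at depth $N$ and extend the remaining coefficients periodically to obtain $f_N\in\mathcal{P}$ with eventually periodic coefficients; continuity of roots combined with the asymptotic self-similarity of $\mathcal{M}$ and $\mathcal{M}_0$ at $\lambda_0$ (exploited throughout the paper) would produce roots $\lambda_N\to\lambda_0$ of $f_N$. The main obstacle is showing that cofinally many $f_N$ simultaneously satisfy the Taylor-polynomial conditions of both Theorems~\ref{thm:bdholem} and \ref{thm:bdholem0}; this is essentially an openness-and-preservation statement for those conditions under prefix-periodization and will be the technical heart of the argument. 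A secondary difficulty is upgrading the resulting topological conjugacy to a quasisymmetric one uniformly in $N$: inside the disk $|\lambda|\leq 2^{-1/2}$ the IFS is uniformly contracting and $g$ is uniformly expanding off its critical value, so a Sullivan-style pullback of a dynamical uniformization should apply, but obtaining bounds stable as $\lambda_N\to\lambda_0$ will require delicate control of the geometry of $\mathsf{A}_{\lambda_N}$ near the critical point, and is where I expect the bulk of the work to lie.
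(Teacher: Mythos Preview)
The statement you are attempting to prove is labelled a \emph{Conjecture} in the paper; the authors do not claim to prove it, and there is no proof in the paper to compare your proposal against. What the paper actually establishes (Theorems~\ref{thm:bdholem} and~\ref{thm:bdholem0}, Corollary~\ref{cor:sharedBd}) is a sufficient criterion for a \emph{single} preperiodic parameter to be accessible in $\partial\mathcal{M}$ and/or $\partial\mathcal{M}_0$; the density claim in the conjecture is left open.

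Regarding the soundness of your proposal itself, the crucial gap is in step~(iii). You assert that ``openness-and-preservation'' of the Taylor-polynomial conditions {\bf(i)}--{\bf(iii)} under prefix-periodization will hold for cofinally many $f_N$, but the paper gives explicit evidence that this fails in general: the final Remark in Section~\ref{sect:ThmExamples} notes that the landmark point $\lambda_6$ has $\mathcal{F}_{\lambda_6}=\{f\}$ with $f$ of rational type, yet the chain disks are disconnected and intersect the instar at every level---conditions {\bf(ii)} and {\bf(iii)} both fail. So the hypotheses of Theorems~\ref{thm:bdholem} and~\ref{thm:bdholem0} are genuinely restrictive, not generic, and you cannot expect them to be inherited by periodic truncations of an arbitrary $f_0$. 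Without a separate mechanism for producing preperiodic approximants that \emph{do} satisfy those inequalities, step~(iii) does not go through.

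There are also smaller issues you have flagged but understated. The assumption $\left|\mathcal{F}_{\lambda_0}\right|=1$ is not automatic for $\lambda_0\in\partial\mathcal{M}\cap\partial\mathcal{M}_0$; the restriction $|\lambda|\le 2^{-1/2}$ excludes part of the boundary; and the quadratic-like construction of $g$ when $\lambda\in\partial\mathcal{M}$ (where the pieces ``just touch'') needs more than a topological disk choice, since the overlap set may be infinite (cf.\ Theorem~\ref{thm:uncountLaCant}). Each of these would have to be addressed before the density argument could even begin.
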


This article is organized as follows. In Section~\ref{sect:notation} we set the notation, while in Section~\ref{sect:ovrlp} we describe the sets $\mathcal{M}$ and $\mathcal{M}_0$ as the closure of the roots of power series with certain prescribed coefficients. We overview some of the known results about $\mathcal{M}$ and $\mathcal{M}_0$ in Section~\ref{sect:ImpThms} and explain how our results fit in. The proofs are provided in Section~\ref{sect:pfMainThm}. Finally, Section~\ref{sect:ThmExamples} is devoted to salient examples of parameters $\lambda$ to which our theorems apply.

%%%%%%%%%%%%%%%%%%%%%%%%%%%%%%%%%%%%%%%%%%%%%%%%%%%%%%%%%%%%
%%%%%%%%%%%%%%%% N O T A T I O N %%%%%%%%%%%%%%%%%%%%%%%%%%%
%%%%%%%%%%%%%%%%%%%%%%%%%%%%%%%%%%%%%%%%%%%%%%%%%%%%%%%%%%%%
\section{Notation}
\label{sect:notation}

Let $\Sigma^n$ be the set of all words $\mathtt{\bm w}=a_0\cdots a_{n-1}$ of length $n$ from the alphabet $\{-,+\}$. Define $\mathfrak{s}_{\mathtt{\bm w}}=\mathfrak{s}_{a_0}\circ\ldots\circ\mathfrak{s}_{a_{n-1}}$ so that  $\mathsf{A}_\lambda^\mathtt{\bm w}=\mathfrak{s}_{\mathtt{\bm w}}(\mathsf{A}_\lambda)$ and
\[\mathsf{A}_\lambda=\bigcup_{\mathtt{\bm w}\in\Sigma^n}\mathsf{A}_\lambda^\mathtt{\bm w}.\]
The natural projection from the space $\Sigma^\infty$ of infinite words, $\mathtt{\bm w}=a_0a_1\cdots$ in $\{-,+\}$, onto the attractor, $\mathsf{A}_\lambda$ is given by 
\[\pi_\lambda:\Sigma^\infty\to\mathsf{A}_\lambda\,,\qquad \pi_\lambda(\mathtt{\bm w})=\sum_{n=0}^\infty a_n\lambda^n.\]
The word $\mathtt{\bm w}\in\Sigma^\infty$ is called the {\em itinerary} of $\pi_\lambda(\mathtt{\bm w})$ in $\mathsf{A}_\lambda$. Remember that if $\mathsf{A}_\lambda$ is connected, then there could be multiple itineraries for a given point in $\mathsf{A}_\lambda$.

Nevertheless, elements $\omega\in\mathsf{A}_\lambda$ that are in $\mathsf{A}_\lambda^{a_0a_1\cdots a_k}$ can be written as
\[\omega=a_0+a_1\lambda+\ldots+a_k\lambda^k+\sum_{n=k+1}^\infty a_n\lambda^n,\qquad a_n\in\{-1,+1\}.\]
We will use $\mathtt{\bm w}|k$ to denote the finite word $a_0a_1\cdots a_k$ coming from the truncation of the infinite word $\mathtt{\bm w}=a_0a_1\cdots$. The notation $\left|\mathtt{\bm w}\right|$ will indicate the length of the word $\mathtt{\bm w}$.

Let $D_r(z)$ denote the closed disk centered at $z$ with radius $r$ and, when $z=0$, we will abbreviate $D_r=D_r(0)$. The following lemma is straightforward.
\begin{lemma}
\label{lem:diambdd}
   Let $R\geq(1-\left|\lambda\right|)^{-1}$, then $\mathsf{A}_\lambda\subset D_R$ and $\widetilde{\mathsf{A}}_\lambda\subset D_R$.
\end{lemma}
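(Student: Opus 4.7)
The plan is to bound every element of the attractor directly from its series representation. Every $\omega\in\widetilde{\mathsf{A}}_\lambda$ (and hence every $\omega\in\mathsf{A}_\lambda\subseteq\widetilde{\mathsf{A}}_\lambda$) is of the form $\omega=\sum_{n=0}^\infty a_n\lambda^n$ with $a_n\in\{-1,0,+1\}$, so the triangle inequality together with the geometric series formula gives
\[
|\omega|\le\sum_{n=0}^\infty|a_n|\,|\lambda|^n\le\sum_{n=0}^\infty|\lambda|^n=\frac{1}{1-|\lambda|}\le R,
\]
where convergence is guaranteed by $|\lambda|<1$. This immediately puts $\omega$ inside $D_R$ and finishes the argument.

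An alternative route I would mention, which fits nicely with the IFS language already introduced, is to verify that $D_R$ is forward invariant under the three similarities. Namely, for any of $\mathfrak{s}_-,\mathfrak{s}_{\mathtt{O}},\mathfrak{s}_+$, we have $\mathfrak{s}_\bullet(D_R)\subseteq D_{|\lambda|R+1}$, and the hypothesis $R\ge(1-|\lambda|)^{-1}$ is exactly the inequality $|\lambda|R+1\le R$. Hence $\mathfrak{s}_-(D_R)\cup\mathfrak{s}_+(D_R)\subseteq D_R$ and similarly for the three-map system; by uniqueness of the attractor of a contractive IFS (equivalently, by iterating $D_R$ and passing to the intersection), both $\mathsf{A}_\lambda$ and $\widetilde{\mathsf{A}}_\lambda$ are contained in $D_R$.

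There is no real obstacle here: the statement is a standard a priori size bound for an IFS attractor, and both proofs amount to one use of the triangle inequality plus summing a geometric series. The only thing worth remarking in the write-up is that the bound $(1-|\lambda|)^{-1}$ is attained when all $a_n$ equal $+1$ and $\lambda\in(0,1)$, so the radius appearing in the hypothesis is sharp.
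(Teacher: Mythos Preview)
Your proposal is correct; the paper itself declares the lemma ``straightforward'' and omits the proof, so your direct geometric-series bound (and the alternative invariance argument) is exactly the kind of one-line verification the authors had in mind.
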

From here on, we let $R:=(1-\left|\lambda\right|)^{-1}$. Let $\mathsf{I}^{-1}=\widetilde{\mathsf{I}}^{-1}$ be the disk $D_R$, and consider the recursive constructions $\mathsf{I}^n=\mathfrak{s}_-(\mathsf{I}^{n-1})\cup\mathfrak{s}_+(\mathsf{I}^{n-1})$, and  $\widetilde{\mathsf{I}}^n=\mathfrak{s}_-(\widetilde{\mathsf{I}}^{n-1})\cup\mathfrak{s}_{\mathtt{O}}(\widetilde{\mathsf{I}}^{n-1})\cup\mathfrak{s}_+(\widetilde{\mathsf{I}}^{n-1})$ for $n\in\mathbb{N}$. From Lemma~\ref{lem:diambdd}, it is clear that 
\[ \mathsf{A}_\lambda=\bigcap_{n=0}^\infty \mathsf{I}^n \quad\text{ and }\quad \widetilde{\mathsf{A}}_\lambda=\bigcap_{n=0}^\infty \widetilde{\mathsf{I}}^n.\]
As before, for any finite word $\mathtt{\bm w}=a_0a_1\cdots a_k$ in $\Sigma^{k+1}$ we can identify the disks in $\mathsf{I}^{k+1}$ as $\mathsf{D}^{\mathtt{\bm w}}:=\mathfrak{s}_{\mathtt{\bm w}}(\mathsf{I}^{-1})$. Each of these disks will then be centered at $\mathfrak{s}_{\mathtt{\bm w}}(0)=a_0+a_1\lambda+\cdots+a_k\lambda^k$, and have a radius of $\left|\lambda\right|^{k+1}R$. If $\mathtt{\bm w}\in\Sigma^\infty$ is an infinite word, then $\mathfrak{s}_{\mathtt{\bm w}}(0)\in\mathsf{A}_\lambda^{\mathtt{\bm w}|k}\subset \mathsf{D}^{\mathtt{\bm w}|k}$ for all $k\geq0$. Analogously, if $\mathtt{\bm w}\in\widetilde{\Sigma}^\infty$, then $\mathfrak{s}_{\mathtt{\bm w}}(0)\in\widetilde{\mathsf{A}}_\lambda^{\mathtt{\bm w}|k}\subset \widetilde{\mathsf{D}}^{\mathtt{\bm w}|k}$ for all $k\geq0$.

From now on, we will refer to $\mathsf{I}^k$ and $\widetilde{\mathsf{I}}^k$ as the {\em instar}\footnote{
The word ``instar'' is used in biology to describe the developmental stage of insects, between each molt until sexual maturity. We chose it because the limit set is obtained by going through (infinitely many) developmental stages, some with a definite larva-like appeareance.} at level $k$ of the IFS $\{\mathfrak{s}_-,\mathfrak{s}_+\}$ and $\{\mathfrak{s}_-,\mathfrak{s}_{\mathtt{O}},\mathfrak{s}_+\}$, respectively. Given that $\mathsf{I}^k$ is the union of two copies of the instar at level $k-1$, $\mathfrak{s}_+(\mathsf{I}^{k-1})$ and $\mathfrak{s}_-(\mathsf{I}^{k-1})$ will be respectively called the {\em positive} and {\em negative instars} at level $k$. Similarly, we refer to $\mathfrak{s}_+(\widetilde{\mathsf{I}}^{k-1})$, $\mathfrak{s}_{\mathtt{O}}(\widetilde{\mathsf{I}}^{k-1})$, and $\mathfrak{s}_-(\widetilde{\mathsf{I}}^{k-1})$ as the {\em positive}, {\em central}, and {\em negative instar} of level $k$.

It will also be useful to have a name for each of the disks and their centers in the instar. The center $\mathfrak{s}_{\mathtt{\bm w}|k}(0)=\sum_{j=0}^k a_j\lambda^j$ will be called a {\em node} with itinerary $\mathtt{\bm w}|k$ and generally denoted by $\nu_{\mathtt{\bm w}|k}$, while the closed disk $\mathsf{D}^{\mathtt{\bm w}|k}$ (or $\widetilde{\mathsf{D}}^{\mathtt{\bm w}|k}$) centered there will be referred to as a {\em nodal disk} with itinerary $\mathtt{\bm w}|k$.

%%%%%%%%%%%%%%%%%%%%%%%%%%%%%%%%%%%%%%%%%%%%%%%%%%%%%%%%%%%%
%%%%%%%%%%%%%%%% O V E R L A P  S E T %%%%%%%%%%%%%%%%%%%%%%
%%%%%%%%%%%%%%%%%%%%%%%%%%%%%%%%%%%%%%%%%%%%%%%%%%%%%%%%%%%%
\section{The Overlap Set}
\label{sect:ovrlp}
The geometric structure of the attractor $\mathsf{A}_\lambda$ is determined in many ways by the overlap set $O_\lambda:=\mathsf{A}_\lambda^-\cap \mathsf{A}_\lambda^+$. It is a standard exercise to show that if $O_\lambda$ is empty then $\mathsf{A}_\lambda$ is simply a Cantor set and consequently $\lambda\in\mathbb{D}\setminus\mathcal{M}$. However, if $O_\lambda$ is ``large'' then it becomes difficult to distinguish the smaller affine copies that constitute $\mathsf{A}_\lambda$. Moreover, if the large size of the overlap is persistent through a small change of $\lambda$, then the parameter is in the interior of $\mathcal{M}$.
Intuitively, $\lambda\in\partial\mathcal{M}$ whenever $O_\lambda$ is in some sense ``thin''.

Whenever $O_\lambda$ is nonempty there exist itineraries $\mathtt{\bm a},\mathtt{\bm b}\in\Sigma^\infty$ with $a_0=-$ and $b_0=+$ such that
\[
\pi_\lambda(\mathtt{\bm w}):=\sum_{j=0}^\infty a_j\lambda^j=\sum_{j=0}^\infty b_j\lambda^j=:\pi_\lambda(\mathtt{\bm v})\iff \sum_{j=0}^\infty (a_j-b_j)\lambda^j=0
\]
Observe that $a_j-b_j\in\{-2,0,+2\}$ for every $j\geq0$. Consequently,  we can ignore a factor of $2$, consider the set
\[\mathcal{P}=\left\{f(z)=\sum_{j=0}^\infty c_jz^j~\bigg|~c_j\in\{-1,0,+1\},~c_0=1\right\},\]
and define the set of power series which have $\lambda$ as a root by
\[\mathcal{F}_\lambda := \{f\in\mathcal{P}~|~ f(\lambda)=0\}.\]

\begin{note}
With some variations, the notation for attractors, parameter spaces, and spaces of power series functions has been used for some time (\hspace{1sp}\cite{BarHar}, \cite{Bo1}, \cite{Ba}, \cite{Sol}, \cite{ERS}, and more recently,~\cite{CKW}). The reader should be aware that the spaces $\mathcal{P}$ and $\mathcal{F}_{\lambda}$ are used both in the context of $2$- and of $3$-attractors, so adding a tilde to signal the use of three values $\{-1,0,1\}$ would only make the notation more cumbersome.
\end{note}

\begin{note}\label{note:2}
In particular, if there is an overlap point $\xi := \pi_\lambda(\mathtt{\bm w})=\pi_\lambda(\mathtt{\bm v})$, and $f(z)=\sum_{j=0}^\infty c_j z^j$ with $c_j=(a_j-b_j)/2$, then $\xi$ satisfies
\[
  \sum_{j=0}^\infty a_j\lambda^j =
  \sum_{j=0}^\infty c_j\lambda^j+\sum_{\substack{0<j\leq \infty\\ \text{s.t. }c_j=0}} a_j\lambda^j =
  f(\lambda)+\sum_{\substack{0<j\leq \infty\\ \text{s.t. }c_j=0}} a_j\lambda^j =
  \sum_{\substack{0<j\leq \infty\\ \text{s.t. }c_j=0}} a_j\lambda^j.
\]
The overlap set, $O_\lambda$ is nonempty whenever the function $f\in\mathcal{P}$ with coefficients $c_j=(a_j-b_j)/2$ satisfies $f(\lambda)=0$. Conversely, if for a particular $\lambda\in\mathbb{D}$ the set $\mathcal{F}_\lambda$ is nonempty, then so is $O_\lambda$, and each element in it has an itinerary associated to some $f\in\mathcal{F}_\lambda$. We have just shown that 
\[\mathcal{M}=\{\lambda\in\mathbb{D}~|~\mathsf{A}_\lambda\text{ is connected}\} = \{\lambda\in\mathbb{D}~|~\left|\mathcal{F}_\lambda\right|\neq0\}.\]
\end{note}

Bousch \cite{Bo1,Bo2} observed that since $\mathsf{A}_\lambda$ is symmetric with respect to $0$, having the origin in the overlap implies that the coefficients $c_j$ of at least one of the power series $f\in\mathcal{F}_\lambda$ must all be nonzero (see Lemma~\ref{lem:onePOM}). It follows that 
\[\mathcal{M}_0=\{\lambda\in\mathbb{D}~|~0\in\mathsf{A}_\lambda \}=\left\{\lambda\in\mathbb{D}~\bigg|~\exists f\in\mathcal{P},~f(\lambda)=\sum_{j=0}^\infty c_j\lambda^j=0,~~c_j\in\{-1,1\}\right\}\]
from which it is clear that $\mathcal{M}_0\subset\mathcal{M}$.

The following result is important, as it gives more insight on the relationship between elements in $O_\lambda$ and the power series which have $\lambda$ as a root.

\begin{lemma}[Solomyak \cite{Sol}]
\label{lem:onePOM}
$\left|O_\lambda\right|=1$ or $2$ if and only if $\left|\mathcal{F}_\lambda\right|=1$. Moreover,
\begin{enumerate}[{\bf (i)}]
    \item $\left|O_\lambda\right|=1$ if and only if $f\in \mathcal{F}_\lambda$ has no zero coefficients.
    \item $\left|O_\lambda\right|=2$ if and only if $f\in \mathcal{F}_\lambda$ has exactly one zero coefficient.
\end{enumerate}
\end{lemma}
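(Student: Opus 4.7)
I plan to describe the overlap set as a union over $\mathcal{F}_\lambda$ of explicit finite sets, then read off cardinalities. For $f\in\mathcal{F}_\lambda$ let $Z(f):=\{j\geq 1\,:\,c_j=0\}$ and define
\[
\Phi(f):=\bigg\{\sum_{j\in Z(f)}\epsilon_j\lambda^j \;:\; \epsilon_j\in\{-1,+1\}\bigg\}.
\]
Unpacking Note~\ref{note:2}, any overlap with witnesses $(\mathtt{\bm a},\mathtt{\bm b})$ (normalized so $a_0=+$, $b_0=-$) produces some $f\in\mathcal{F}_\lambda$ via $c_j=(a_j-b_j)/2$, together with a sign pattern $\epsilon_j=a_j=b_j$ on $Z(f)$, and the overlap point is $\sum_{j\in Z(f)}\epsilon_j\lambda^j$. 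Conversely every pair $(f,\epsilon)$ with $f\in\mathcal{F}_\lambda$ and $\epsilon\in\{\pm1\}^{Z(f)}$ reconstructs a valid itinerary pair: set $a_j=c_j=-b_j$ on $\{j:c_j\neq 0\}$ and $a_j=b_j=\epsilon_j$ on $Z(f)$. Hence $O_\lambda=\bigcup_{f\in\mathcal{F}_\lambda}\Phi(f)$.

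The central quantitative input is the bound that $|\Phi(f)|$ equals $1$, $2$, or is at least $4$ when $|Z(f)|$ is $0$, $1$, or $\geq 2$, respectively. The first two cases are immediate (in the second, $\Phi(f)=\{\pm\lambda^k\}$ has two elements since $\lambda\neq 0$). For the third, pick $k<l$ in $Z(f)$, fix $\epsilon_j$ arbitrarily on $Z(f)\setminus\{k,l\}$, and note that the four resulting points $S\pm\lambda^k\pm\lambda^l$ are pairwise distinct because $|\lambda|<1$ forbids $\lambda^{l-k}=\pm 1$. With this in hand the sufficiency direction ($\Leftarrow$) is immediate: if $\mathcal{F}_\lambda=\{f\}$ then $O_\lambda=\Phi(f)$ has cardinality $1$ or $2$ according to $|Z(f)|$, pinning down (i) and (ii).

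For the necessity direction ($\Rightarrow$) I assume $|O_\lambda|\leq 2$; then every $f\in\mathcal{F}_\lambda$ has $|Z(f)|\leq 1$. To rule out $|\mathcal{F}_\lambda|\geq 2$, suppose for contradiction $f_1\neq f_2$ both lie in $\mathcal{F}_\lambda$. A preliminary observation is that the disagreement set $D:=\{j\,:\,c_j^{(1)}\neq c_j^{(2)}\}$ satisfies $|D|\geq 2$, since otherwise $f_1-f_2$ would be a nonzero monomial vanishing at $\lambda\in\mathbb{D}^*$. I then split by zero pattern:
\begin{enumerate}[(A)]
    \item both $Z(f_i)=\emptyset$: the average $g:=(f_1+f_2)/2$ lies in $\mathcal{P}$ with coefficients in $\{-1,0,1\}$, satisfies $g(\lambda)=0$, so $g\in\mathcal{F}_\lambda$ with $Z(g)=D$ of size $\geq 2$, violating $|Z(g)|\leq 1$;
    \item one empty, one a singleton $\{k\}$: $\Phi(f_1)\cup\Phi(f_2)=\{0,\pm\lambda^k\}$ already gives three overlap points;
    \item $Z(f_1)=\{k_1\}$, $Z(f_2)=\{k_2\}$ with $k_1\neq k_2$: $\Phi(f_1)\cup\Phi(f_2)=\{\pm\lambda^{k_1},\pm\lambda^{k_2}\}$ has four distinct elements;
    \item both singletons equal $\{k\}$: averaging still lies in $\mathcal{F}_\lambda$, now with $Z(g)=D\cup\{k\}$ of size $\geq 3$, again violating $|Z(g)|\leq 1$.
\end{enumerate}
Every sub-case contradicts either the standing assumption $|O_\lambda|\leq 2$ or the derived bound $|Z|\leq 1$, so $|\mathcal{F}_\lambda|=1$, completing the proof.

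The main obstacle is the asymmetric treatment of the sub-cases: the averaging shortcut $g=(f_1+f_2)/2$ produces a genuine element of $\mathcal{P}$ only when $f_1$ and $f_2$ share zero patterns, so the ``mixed'' sub-cases (B) and (C) must be handled by direct inspection of $\Phi(f_1)\cup\Phi(f_2)$ rather than by averaging. The elementary distinctness $\lambda^k\neq\pm\lambda^l$ for $k\neq l$ and $\lambda\in\mathbb{D}^*$ is invoked repeatedly and is what makes every count honest.
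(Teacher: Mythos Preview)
The paper does not supply a proof of this lemma; it is cited from Solomyak, so there is no in-paper argument to compare against and I will simply assess your proof.

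Your treatment of the implication $|O_\lambda|\in\{1,2\}\Rightarrow|\mathcal{F}_\lambda|=1$ is correct. The description $O_\lambda=\bigcup_{f\in\mathcal{F}_\lambda}\Phi(f)$ is exactly the content of Note~\ref{note:2}; the lower bound $|\Phi(f)|\geq4$ whenever $|Z(f)|\geq2$ is established cleanly via the distinctness of $S\pm\lambda^k\pm\lambda^l$; and the four-case split---averaging in (A) and (D), direct counting in (B) and (C)---disposes of $|\mathcal{F}_\lambda|\geq2$ without gaps. The refinements {\bf(i)} and {\bf(ii)} then follow from $O_\lambda=\Phi(f)$ together with your cardinality table for $\Phi(f)$.

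There is, however, a genuine gap in what you call the ``sufficiency direction ($\Leftarrow$)''. You assert that if $\mathcal{F}_\lambda=\{f\}$ then $|O_\lambda|=|\Phi(f)|$ ``has cardinality $1$ or $2$ according to $|Z(f)|$'', but this tacitly assumes $|Z(f)|\leq1$, which you never justify---and which cannot be justified. Theorem~\ref{thm:uncountLaZ} (Bandt--Hung), stated two paragraphs below the lemma in this very paper, produces for every $m\geq2$ uncountably many $\lambda$ with $|\mathcal{F}_\lambda|=1$ and $|O_\lambda|=2^m$; so the $\Leftarrow$ direction of the first sentence of the lemma is literally false. The paper's own remark that ``the proof of Lemma~\ref{lem:onePOM} cannot be easily extended to the case of $|O_\lambda|=2^m$ for $m\geq2$'' only makes sense if the lemma is read as the one-sided implication $|O_\lambda|\in\{1,2\}\Rightarrow|\mathcal{F}_\lambda|=1$ together with {\bf(i)}--{\bf(ii)}. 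Under that reading your argument is complete; but you should flag the discrepancy rather than claim to have proved the biconditional.
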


Using Rouche's Theorem and careful estimates Solomyak was also able to prove that
\begin{theorem}[Solomyak \cite{Sol}]
\label{thm:uncountLa}
There exist uncountably many $\lambda\in\mathcal{M}$ for which $\left|O_\lambda\right|=1$. The itinerary of $0\in\mathsf{A}_\lambda$ is different for different $\lambda$.
\end{theorem}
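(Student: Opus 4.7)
The plan is to exhibit the uncountable family as roots of carefully chosen power series in $\mathcal{P}$ with no zero coefficients, built via a Rouche-type branching construction in parameter space. First I would produce a polynomial seed $p(z) = 1 + \sum_{j=1}^N c_j^* z^j \in \mathcal{P}$ with all $c_j^* \in \{-1,+1\}$ possessing a simple root $\lambda_0$ strictly inside $\mathbb{D}$; for instance $p(z) = 1 + z - z^2$ vanishes at $(\sqrt{5}-1)/2$. Simplicity of the root gives a lower bound $|p(z)|\geq M\rho$ on the circle $|z-\lambda_0|=\rho$ for small $\rho$ with $|\lambda_0|+\rho<1$.

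Next I would fix $N$ large enough that for any tail $(c_j)_{j>N}\in\{-1,+1\}^\mathbb{N}$ the series $g(z)=\sum_{j>N}c_j z^j$ satisfies
\[
  |g(z)|\;\leq\;\frac{|z|^{N+1}}{1-|z|}\;<\;\frac{M\rho}{2}
\]
throughout $\overline{D_\rho(\lambda_0)}$. Rouche's theorem then guarantees that the full series $f_{\mathbf{c}}(z) = p(z) + g(z) \in \mathcal{P}$ has a unique simple root $\lambda(\mathbf{c}) \in D_\rho(\lambda_0)$, producing a map $\Phi: \{-1,+1\}^{\mathbb{Z}_{>N}} \to D_\rho(\lambda_0)$ whose image lies in $\mathcal{M}_0\subset\mathcal{M}$.

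To promote the image of $\Phi$ to uncountable cardinality with distinct coefficient sequences mapping to distinct $\lambda$'s, I would iterate. Let $p_k(z) = p(z) + \sum_{j=N+1}^{N+k}c_j z^j$ with simple root $\lambda_k$. A quantitative implicit-function estimate gives the first-order shift $\Delta\lambda \approx -c_{N+k+1}\,\lambda_k^{N+k+1}/p_k'(\lambda_k)$ when appending $c_{N+k+1}=\pm 1$, so the two sign choices move the root in opposite directions by an amount comparable to $|\lambda_0|^{N+k+1}/|p'(\lambda_0)|$. Re-applying Rouche on smaller disks around each of the two displaced roots, with radii $\rho_{k+1}$ chosen to shrink slower than the root separation, yields a binary tree of nested pairwise disjoint disks. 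Infinite paths through this tree correspond to $2^{\mathbb{N}}$ distinct limit points $\lambda(\mathbf{c})$, each a zero of a distinct $f_{\mathbf{c}}\in\mathcal{P}$ with all coefficients in $\{-1,+1\}$.

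The main obstacle is upgrading $|\mathcal{F}_\lambda|\geq 1$ to $|\mathcal{F}_\lambda|=1$, which is what Lemma~\ref{lem:onePOM} requires for $|O_\lambda|=1$. The construction provides, for each $\lambda$ in the Cantor image, a specific element $f_{\mathbf{c}}\in\mathcal{F}_\lambda$ with no zero coefficients; ruling out further accidental vanishings by some $f'\in\mathcal{P}\setminus\{f_{\mathbf{c}}\}$ demands a separate argument. I would attempt a Baire-category or Hausdorff-dimension comparison showing that the set of $\lambda$ in the Cantor image for which any such $f'$ exists is meager (the countable union over $f'\in\mathcal{P}$ of the zero loci of $f-f'$, each a discrete set, intersected with our Cantor set). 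Excising this small subset leaves uncountably many $\lambda$ with $|\mathcal{F}_\lambda|=1$ and unique $f_\lambda$ having no zero coefficients. Because the itinerary of $0\in\mathsf{A}_\lambda$ is encoded by $f_\lambda$ via $c_j=-a_j$ (from the symmetry of $\mathsf{A}_\lambda$), the injectivity of $\mathbf{c}\mapsto\lambda(\mathbf{c})$ established in the branching step yields distinct itineraries for distinct $\lambda$, completing the argument.
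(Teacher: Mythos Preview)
The paper does not actually prove this theorem; it is quoted from Solomyak with only the remark that the argument uses ``Rouche's Theorem and careful estimates''. Your Rouche-based branching construction is therefore plausibly in the same spirit, but as written it has two genuine problems.

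The minor one: your sample seed $1+z-z^2$ has its root on the real axis, and since every coefficient you append is real, the implicit-function/Rouche iteration keeps all roots real. For real $\lambda\in(1/2,1)$ the attractor $\mathsf{A}_\lambda$ is the full interval $[-(1-\lambda)^{-1},(1-\lambda)^{-1}]$, so $O_\lambda$ is itself a nondegenerate interval and $|\mathcal{F}_\lambda|=\infty$ for every point of your Cantor set. This is fixable by starting from a non-real seed, but the proposal as stated produces no examples at all.

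The serious gap is the uniqueness step. You claim the bad set $\{\lambda:|\mathcal{F}_\lambda|\geq 2\}$ is ``the countable union over $f'\in\mathcal{P}$ of the zero loci of $f-f'$'', and then invoke Baire category or a dimension count. But $\mathcal{P}$ is uncountable, so neither argument applies; a priori every $\lambda$ in your Cantor set could be a zero of a second power series. Establishing $|\mathcal{F}_\lambda|=1$ is exactly the ``careful estimates'' part of Solomyak's proof: one needs a uniform lower bound, over a region containing the Cantor set, on every normalized difference $\sum_{j\geq 0}\epsilon_j z^j$ with $\epsilon_j\in\{-2,-1,0,1,2\}$ and $\epsilon_0\neq 0$ (compare the discussion of the Bandt--Hung method following Example~5). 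Your proposal does not supply this, and without it the conclusion $|O_\lambda|=1$ does not follow.
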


There is a historically important property of an IFS which ensures that there is not ``too much'' overlap.
We say that an IFS of contraction similarities $\left\{\mathfrak{s}_{j}\right\}_{j=1}^{m}$ satisfies the {\em open set condition} (OSC) if there exists a nonempty open set $V\subset\mathbb{C}$ with \[\bigcup_{j=1}^m\mathfrak{s}_{j}(V)\subseteq V\text{ and } \mathfrak{s}_{j}(V)\cap \mathfrak{s}_{k}(V)=\emptyset \text{ for all } j\neq k.\]

The example of $\lambda={\rm i}/\sqrt{2}$ is useful in gaining intuition about the OSC. The attractor for $\mathsf{A}_{\lambda}$ is the rectangle with corners $\pm2\pm\sqrt{2}{\rm i}$, with side ratio $\sqrt{2}$. The two images $\mathfrak{s}_{+}(\mathsf{A}_\lambda)$, $\mathfrak{s}_{-}(\mathsf{A}_\lambda)$ cover the left and right halves of $\mathsf{A}_\lambda$, much as a A$4$ sheet of paper folded in half. Their intersection is the middle fold, so a feasible open set is the interior of $\mathsf{A}_\lambda$ itself.

In general, it can be challenging to prove that the OSC holds for a given IFS with a connected attractor. Bandt and his collaborators have recently shown that for connected self-similar sets in the plane a finite overlap implies OSC \cite{BaRa}. Specifically, they showed the following:
\begin{theorem}[Bandt-Hung \cite{BaHu}]
\label{thm:uncountLaZ}
For every $m\in\mathbb{N}$ there are uncountably many $\lambda\in\mathcal{M}$ for which OSC holds, and the overlap set consists of $2^m$ points. For each such $\lambda$ there exists a unique and distinct $f\in\mathcal{P}$ such that $\mathcal{F}_\lambda=\{f\}$.
\end{theorem}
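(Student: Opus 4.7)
The plan is to extend Solomyak's construction used in Theorem~\ref{thm:uncountLa} (the $m=0$ case) by permitting power series with a prescribed set of $m$ vanishing coefficients. After fixing positions $1\leq j_{1}<j_{2}<\cdots<j_{m}$, consider the subfamily
\[
  \mathcal{P}_{J}:=\left\{f=\sum_{j\geq 0} c_{j}z^{j}\in\mathcal{P}~\bigg|~c_{j_{i}}=0 \text{ for } i=1,\ldots,m,~~c_{j}\in\{-1,+1\} \text{ otherwise}\right\}.
\]
By Note~\ref{note:2}, if $\mathcal{F}_\lambda=\{f\}$ for some $f\in\mathcal{P}_{J}$, then $O_\lambda=\bigl\{\sum_{i=1}^{m}\epsilon_{i}\lambda^{j_{i}}:(\epsilon_i)\in\{\pm1\}^{m}\bigr\}$, so that $|O_\lambda|\leq 2^{m}$ with equality whenever the $2^{m}$ signed sums are distinct. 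Once this finite overlap is established and $\lambda\in\mathcal{M}$, the OSC part is free: the Bandt--Rao result cited before the theorem states that a finite overlap set for a connected planar self-similar attractor forces OSC.

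The first main task is existence. Starting from a reference parameter $\lambda_{0}$ in a region of $\mathcal{M}$ where Solomyak's Rouche estimates apply, I would produce a polynomial $p_{0}$ of some degree $N\geq j_{m}$ lying in $\mathcal{P}_{J}$ with $p_{0}(\lambda_{0})=0$ by solving for the free sign choices in positions $j\leq N$ outside $J\cup\{0\}$. A Rouche argument in a small disk around $\lambda_{0}$ then shows that for every tail $(c_{j})_{j>N}\in\{-1,+1\}^{\infty}$ the perturbed series $f(z)=p_{0}(z)+\sum_{j>N} c_{j}z^{j}$ retains a root $\lambda(f)$ near $\lambda_{0}$. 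Since there are $2^{\aleph_{0}}$ tail choices and the map $f\mapsto\lambda(f)$ is injective on a residual subset (because different polynomial parts yield genuinely different series and the perturbation is analytic), this already delivers uncountably many candidate parameters, each belonging to $\mathcal{M}$ because $f\in\mathcal{F}_{\lambda(f)}$.

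The second, harder task is to promote each candidate to satisfy $\mathcal{F}_{\lambda}=\{f\}$ and to make the $2^{m}$ overlap points genuinely distinct. For uniqueness, suppose some $g\in\mathcal{P}\setminus\{f\}$ also vanishes at $\lambda$. Then $h:=f-g$ has coefficients in $\{-2,-1,0,1,2\}$, satisfies $h(0)=0$, and $h(\lambda)=0$; after factoring out the maximal power of $z$ one obtains a nontrivial power series with bounded coefficients vanishing at $\lambda$. A refined Rouche estimate, working in a smaller disk whose radius is tuned to the contraction rate $|\lambda|$, should rule out such an $h$ for all but countably many tail sequences---this is essentially the same mechanism used by Solomyak to isolate a single $f\in\mathcal{F}_\lambda$, now applied to the perturbed family in $\mathcal{P}_{J}$. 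Separately, the coincidence $\sum\epsilon_{i}\lambda^{j_{i}}=\sum\epsilon_{i}'\lambda^{j_{i}}$ with $(\epsilon_{i})\neq(\epsilon_{i}')$ forces $\lambda$ onto the zero set of one of finitely many nonzero polynomials with coefficients in $\{-2,0,2\}$, hence onto a countable algebraic locus. Removing this countable locus and the countably many tail sequences excluded above still leaves an uncountable family.

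The main obstacle is the simultaneous control required in the previous paragraph: the Rouche argument must be carried out in a region small enough to pin the root $\lambda(f)$, yet with enough room to guarantee that no competitor $g\in\mathcal{P}$ acquires $\lambda(f)$ as a zero and that the $2^{m}$ algebraic relations among the $\lambda^{j_{i}}$ fail. Once this delicate balance is set up---by choosing $|\lambda_{0}|$ bounded away from $1$ and choosing $N$ large relative to $j_{m}$ so that the tail perturbation is dominated by $p_{0}'(\lambda_{0})$---the counting of $O_\lambda=2^{m}$ is immediate from Note~\ref{note:2}, and OSC is supplied by Bandt--Rao \cite{BaRa}, completing all three claims of the theorem.
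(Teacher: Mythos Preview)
This theorem is not proved in the present paper; it is quoted from Bandt--Hung \cite{BaHu} as background, so there is no in-paper proof to compare against. However, the paper does comment on the method, and that commentary exposes the main gap in your proposal. Your uniqueness step asserts that ``a refined Rouche estimate \ldots\ should rule out such an $h$''---``essentially the same mechanism used by Solomyak.'' But immediately after stating the theorem the paper warns that ``the proof of Lemma~\ref{lem:onePOM} cannot be easily extended to the case of $|O_\lambda|=2^m$ for $m\geq 2$. Indeed, Bandt and Hung used a different argument to show the uniqueness of the power series.'' So recycling Solomyak's mechanism is precisely what does \emph{not} go through here, and you have not supplied a replacement.

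The paper does hint (end of Section~\ref{sect:ThmExamples}) at what Bandt--Hung actually do: they build the uncountable family by perturbing the periodic block of a concrete landmark parameter (around $\lambda_5$), allowing some of the repeating coefficients to be $0$, and they prove $|\mathcal{F}_\lambda|=1$ by establishing a \emph{uniform} lower bound, over a neighbourhood of the whole family, on the normalized difference $(g(z)-h(z))/z^k$ for every competitor $g\in\mathcal{P}$. This is a quantitative separation estimate rather than a Rouche root count, and it is the missing ingredient in your outline. (A minor additional issue: you cannot ``solve for the free sign choices'' to hit a pre-chosen $\lambda_0$ exactly, since the signs are discrete; one must instead fix $p_0$ first and then locate a suitable root of it in the desired region.)
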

\begin{theorem}[Bandt-Hung \cite{BaHu}]
\label{thm:uncountLaCant}
For every $\beta\in[0,0.2]$ there are uncountably many $\lambda\in\mathcal{M}$ for which OSC holds, and the overlap set is a Cantor set of Hausdorff dimension $\beta$. For each $\lambda$ there exists a unique and distinct $f\in\mathcal{P}$ such that $\mathcal{F}_\lambda=\{f\}$.
\end{theorem}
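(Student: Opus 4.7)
The plan is to exhibit $O_\lambda$ as the attractor of an auxiliary graph-directed IFS whose uniform contraction ratio is $|\lambda|$, so that its Hausdorff dimension is governed by the spectral radius of a combinatorial transition matrix scaled against $\log|\lambda|^{-1}$. Moving $\lambda$ along carefully chosen one-parameter families then lets the dimension sweep through every value in $[0,0.2]$, while preservation of OSC is maintained by rigidity of the underlying graph.

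First I would give a symbolic description of $O_\lambda$. Any overlap point $\xi\in O_\lambda$ admits two itineraries $\mathtt{\bm a},\mathtt{\bm b}\in\Sigma^\infty$ with $a_0=-$ and $b_0=+$. Recording, at each depth $k$, which negative-instar nodal disk $\mathsf{D}^{\mathtt{\bm a}|k}$ meets which positive-instar disk $\mathsf{D}^{\mathtt{\bm b}|k}$ yields a subshift $\Sigma_O\subset\{-,+\}^\infty\times\{-,+\}^\infty$; under OSC the collision pattern at level $k+1$ is determined by that at level $k$ together with a bounded amount of local geometry, so $\Sigma_O$ is of finite type. The natural projection $\pi_\lambda$ then identifies $O_\lambda$ with the attractor of a graph-directed self-similar system of uniform ratio $|\lambda|$, and Mauldin--Williams' theorem gives
\[
  \dim_H O_\lambda=\frac{\log\rho(M_\lambda)}{\log|\lambda|^{-1}},
\]
where $M_\lambda$ is the transition matrix of the collision graph.

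To prescribe $\beta\in[0,0.2]$, I would fix a \emph{rigid} combinatorial template: a collision graph that depends on $\lambda$ only through a periodic overlap word, so that $M_\lambda$ is constant on a real-analytic arc of $\mathcal{M}$ containing a candidate parameter. As $\lambda$ varies along such an arc, both $|\lambda|$ and $\rho(M_\lambda)$ vary continuously, and hence so does $\dim_H O_\lambda$. An intermediate value argument then realizes every $\beta$ in a subinterval of $[0,0.2]$, and finitely many templates (of varying periods $p$) suffice to cover the whole interval. The ceiling $0.2$ is precisely the largest dimension attainable before tangencies between non-adjacent nodal disks force OSC to collapse.

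For uncountability and the uniqueness of $f\in\mathcal{F}_\lambda$, I would invoke a Solomyak-style transversality argument analogous to Theorem~\ref{thm:uncountLa}: along each rigid arc the initial collision itinerary can be perturbed inside an uncountable Cantor set of admissible $\lambda$'s without altering the graph-directed structure. Distinct perturbations yield distinct collision itineraries and hence distinct power series $f\in\mathcal{P}$, so $\mathcal{F}_\lambda=\{f\}$ and $f$ varies with $\lambda$. The hardest step is the geometric bookkeeping in Step~1 together with the OSC preservation in Step~3: one must verify in each template that the graph-directed iteration produces no unwanted tangencies as $\lambda$ is deformed, which is where the $0.2$ threshold enters and where the bulk of Bandt--Hung's case analysis lives.
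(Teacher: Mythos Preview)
The paper does not prove Theorem~\ref{thm:uncountLaCant}; it is quoted from Bandt--Hung~\cite{BaHu} as background and no argument for it appears anywhere in the text. There is therefore no ``paper's own proof'' against which your proposal can be compared.

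What little the paper does say about the Bandt--Hung method points in a somewhat different direction from your sketch. The remark following the two cited theorems, together with the discussion near the end of Section~\ref{sect:ThmExamples}, indicates that Bandt and Hung build their uncountable families by perturbing the repeating block of a reference power series (in a neighborhood of the landmark $\lambda_5$), and prove uniqueness of $f$ by establishing a uniform lower bound on the normalized differences $(g(z)-h(z))/z^k$ for $g\in\mathcal{P}$ over a neighborhood of the parameter set. Your proposal instead encodes $O_\lambda$ as the attractor of a graph-directed IFS and reads off the dimension via Mauldin--Williams and a spectral radius. That is a reasonable heuristic, but several of your steps are assertions rather than arguments: that OSC forces the collision subshift to be of finite type, that the transition matrix $M_\lambda$ is locally constant along a real-analytic arc of $\mathcal{M}$, and that the threshold $0.2$ is exactly where non-adjacent tangencies destroy OSC. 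None of these is addressed in the present paper, and each would require substantial work to justify.
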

It must be noted that the proof of Lemma~\ref{lem:onePOM} cannot be easily extended to the case of $\left|O_\lambda\right|=2^m$ for $m\geq2$. Indeed, Bandt and Hung used a different argument to show the uniqueness of the power series.

%%%%%%%%%%%%%%%%%%%%%%%%%%%%%%%%%%%%%%%%%%%%%%%%%%%%%%%%%%%%
%%%%%%%%%%%%%%%% S E L F  S I M I L A R I T Y %%%%%%%%%%%%%%
%%%%%%%%%%%%%%%%%%%%%%%%%%%%%%%%%%%%%%%%%%%%%%%%%%%%%%%%%%%%
\section{Self and Asymptotic Similarity}
\label{sect:ImpThms}
Before describing the old and new results about $\partial\mathcal{M}$, we recall some definitions which can be found in \cite{L}. Remember that $D_r(z)$ denotes a closed disk centered at $z$ with radius $r$ and $D_r=D_r(0)$. For compact sets $E,F\subset\mathbb{C}$ denote \[[E]_r=(E\cap D_r)\cup \partial D_r; \qquad {\rm d}_r(E,F)={\rm d_H}([E]_r,[F]_r)\] where ${\rm d_H}$ is the Hausdorff distance.
\begin{definition}{\ }
    \begin{enumerate}[{\bf(i)}]
        \item A compact set $F$ is {\em $\rho$-self-similar} about $z\in F$, for $\rho\in\mathbb{C}\setminus\overline{\mathbb{D}}$, if there is $r>0$ such that $[\rho(F-z)]_r=[F-z]_r$.
        \item Two compact sets $E$ and $F$ are {\em asymptotically similar} about $z\in E$ and $w\in F$ if there is $r>0$ such that
        \[\lim_{t\in\mathbb{C},~\left|t\right|\to\infty} {\rm d}_r(t(E-z),t(F-w))=0.\]
        \item A compact set $E$ is {\em asymptotically $\rho$-self similar} about a point $z\in E$ if there is $r>0$ and a compact set $F$ such that 
        \[{\rm d}_r(\rho^n(E-z),F)\to0\quad\text{ as }n\to\infty.\]
    \end{enumerate}
\end{definition}

\begin{definition}
A power series $f\in\mathcal{P}$ is said to be {\em of rational type} $(\ell,p)$ if $\ell$ and $p$ are the minimal integers such that
\begin{equation}
 \label{f-rat}
  f(z)=
  \sum_{j=0}^\ell c_jz^j+\frac{c_{\ell+1}z^{\ell+1}+\ldots+c_{\ell+p}z^{\ell+p}}{1-z^p},\qquad \big( c_j\in\{-1,0,+1\} \big).\tag{$\star$}
\end{equation}
(Note that this is equivalent to the requirement that the sequence of coefficients is pre-periodic)
\end{definition}

We can now state the result of Solomyak:
\begin{theorem}[Solomyak \cite{Sol}]
\label{thm:asymSol}
    Suppose $\lambda\in\mathcal{M}\setminus\mathbb{R}$, with $\left|\lambda\right|\leq2^{-1/2}$, is such that $\mathcal{F}_\lambda=\{f\}$ with $f$ of rational type $(\ell,p)$. Then $f'(\lambda)\neq0$ and
    \begin{enumerate}[{\bf(i)}]
        \item $\widetilde{\mathsf{A}}_{\lambda}$ is $\lambda^{-p}$-self similar about  $-\lambda^{-(\ell+1)}\sum_{n=0}^\ell c_j\lambda^j=:\zeta$.
        \item $\mathcal{M}$ about $\lambda$ is asymptotically similar to $\tfrac{\lambda^{\ell+1}}{f'(\lambda)}\widetilde{\mathsf{A}}_{\lambda}$ about $\tfrac{\lambda^{\ell+1}}{f'(\lambda)}\zeta$.
        \item $\mathcal{M}$ is asymptotically $\lambda^{-p}$-self similar about $\lambda$.
    \end{enumerate}
\end{theorem}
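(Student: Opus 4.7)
The plan splits across the three parts, with (iii) an easy consequence of (i) and (ii).

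For (i), I would begin by promoting $\zeta$ from an abstract combination to a genuine element of $\widetilde{\mathsf{A}}_\lambda$ with a purely periodic itinerary. Using $f(\lambda)=0$ together with the rational-type expansion, one finds $\zeta = (c_{\ell+1}+c_{\ell+2}\lambda+\cdots+c_{\ell+p}\lambda^{p-1})/(1-\lambda^p) = \sum_{m\geq 0} c_{\ell+1+(m\bmod p)}\lambda^m$, so $\zeta$ is the point of $\widetilde{\mathsf{A}}_\lambda$ with period-$p$ itinerary $c_{\ell+1}c_{\ell+2}\cdots c_{\ell+p}c_{\ell+1}\cdots$. The identity $\zeta = \sum_{k=0}^{p-1} c_{\ell+1+k}\lambda^k + \lambda^p\zeta$ shows $\zeta + \lambda^p(\omega-\zeta) \in \widetilde{\mathsf{A}}_\lambda$ for every $\omega \in \widetilde{\mathsf{A}}_\lambda$, giving the easy inclusion $\lambda^p(\widetilde{\mathsf{A}}_\lambda-\zeta) \subseteq \widetilde{\mathsf{A}}_\lambda-\zeta$. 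The core task in (i) is the reverse inclusion on $D_r$: for $r$ small enough, every $\omega\in\widetilde{\mathsf{A}}_\lambda\cap D_r(\zeta)$ must have an itinerary starting with $c_{\ell+1},\ldots,c_{\ell+p}$. I would argue by contradiction: a sequence $\omega_n\to \zeta$ with a different length-$p$ prefix would force a new overlap relation and hence a second power series in $\mathcal{F}_\lambda$, contradicting uniqueness. The bound $|\lambda|\leq 2^{-1/2}$ controls the geometry so that competing length-$p$ nodal disks $\widetilde{\mathsf{D}}^{\mathtt{\bm w}}$ remain uniformly far from $\zeta$.

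For (ii), any $g \in \mathcal{P}$ close to $f$ with a root $\lambda_g$ near $\lambda$ must share its first $\ell+1$ coefficients with $f$ (again by uniqueness), so $g(z) = \sum_{j=0}^\ell c_j z^j + z^{\ell+1}\tilde{h}(z)$ with $\tilde{h}(\lambda)\in \widetilde{\mathsf{A}}_\lambda$ close to $\zeta$. This forces $g(\lambda) = \lambda^{\ell+1}(\tilde{h}(\lambda)-\zeta)$, which is small. Combining $f'(\lambda)\neq 0$ (which I would derive from a Bousch-style zero-counting estimate built on uniqueness of $f$) with the implicit function theorem gives $\lambda_g - \lambda = -\tfrac{\lambda^{\ell+1}}{f'(\lambda)}(\tilde{h}(\lambda)-\zeta) + o\big(|\tilde{h}(\lambda)-\zeta|\big)$. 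As $g$ ranges over admissible series, $\tilde{h}(\lambda)$ sweeps a dense subset of $\widetilde{\mathsf{A}}_\lambda$ near $\zeta$; closing up and using the global symmetry $\widetilde{\mathsf{A}}_\lambda = -\widetilde{\mathsf{A}}_\lambda$ to absorb the sign yields the stated similarity with scale $\lambda^{\ell+1}/f'(\lambda)$. Part (iii) is then immediate by composition: $\mathcal{M}$ at $\lambda$ is asymptotically similar to a scaled-translated copy of $\widetilde{\mathsf{A}}_\lambda$ at $\zeta$, and the latter is exactly $\lambda^{-p}$-self-similar at $\zeta$.

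The main obstacle in both (i) and (ii) is the same quantitative separation issue: converting the qualitative hypothesis $\mathcal{F}_\lambda=\{f\}$ into a concrete bound that excludes competing nodal disks from a small neighborhood of $\zeta$, and correspondingly rules out alternative initial coefficient patterns for series with roots near $\lambda$. This is precisely where the condition $|\lambda|\leq 2^{-1/2}$ earns its keep, controlling disk diameters against the nearest-neighbor spacings between nodal centers.
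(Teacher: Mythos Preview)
The paper does not prove this theorem: it is quoted verbatim from Solomyak's work \cite{Sol} and used as a black box (together with Theorem~\ref{thm:asymCKW}) in the proof of Theorem~\ref{thm:bdholem}. There is therefore no ``paper's own proof'' to compare your proposal against.

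That said, the paper does contain, in Lemma~\ref{lem:selfsimD} and Corollary~\ref{cor:selfsimA}, a self-contained proof of the exact self-similarity of $\mathsf{A}_\lambda$ (not $\widetilde{\mathsf{A}}_\lambda$) about the overlap points $\xi\in O_\lambda$, under the same rational-type hypothesis. The argument there is purely computational: it uses the identity $c_0+c_1\lambda+\cdots+c_{\ell+p}\lambda^{\ell+p}=\lambda^p(c_0+\cdots+c_\ell\lambda^\ell)$, which follows directly from $f(\lambda)=0$ and the rational form \eqref{f-rat}, to show that the nodal disks $\mathsf{D}^{\mathtt{\bm a}|\ell+n+p}$ and $\mathsf{D}^{\mathtt{\bm a}|\ell+n}$ are similar about $\xi$ with ratio $\lambda^{-p}$. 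This is essentially the ``easy inclusion'' half of your part~(i), done at the level of instars rather than the attractor, and for $\xi$ rather than $\zeta$ (the two are related by $\zeta=\mathfrak{s}_{\mathtt{\bm a}|\ell}^{-1}(\xi)$, as spelled out in the proof of Theorem~\ref{thm:bdholem}). The paper never addresses the harder direction you identify---ruling out competing itineraries near $\zeta$ via the uniqueness hypothesis $\mathcal{F}_\lambda=\{f\}$---nor does it touch parts~(ii) or~(iii); for those it simply defers to \cite{Sol}.

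Your sketch is a plausible reconstruction of how Solomyak's argument goes, and the identification of the ``quantitative separation issue'' as the crux is accurate. But since the present paper offers no proof, I cannot confirm whether your route matches Solomyak's or assess the details of your handling of $f'(\lambda)\neq 0$ and the implicit-function step against any text here.
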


Notice that if the coefficients of $f$ are all non zero, then the theorem holds true if we substitute $\widetilde{\mathsf{A}}_\lambda$ with $\mathsf{A}_\lambda$ and $\mathcal{M}$ with $\mathcal{M}_0$. However, Theorem~\ref{thm:asymSol} is not enough to certify that parameters $\lambda$ satisfying the hypothesis lie on $\partial\mathcal{M}$, because points in a neighborhood of $\zeta$, not in $\widetilde{\mathsf{A}}_\lambda$, are not necessarily also outside of $\mathcal{M}$ in a neighborhood of $\lambda$.

\begin{theorem}[Calegari-Koch-Walker \cite{CKW}]
\label{thm:asymCKW}
    Suppose $\lambda\in\mathcal{M}\setminus\mathbb{R}$, with $\left|\lambda\right|\leq2^{-1/2}$, is a root of   $f$ of rational type $(\ell,p)$.
    \begin{enumerate}[{\bf(i)}]
    \item If $C\in \tfrac{\lambda^{\ell+1}}{f'(\lambda)}\left(\widetilde{\mathsf{A}}_{\lambda}-\zeta\right)$, then for every $\varepsilon>0$, there is a $C'$ such that $\left|C-C'\right|<\varepsilon$ and for all sufficiently large $n$, a neighborhood of $C'\lambda^{pn}+\lambda$ is contained in $\mathcal{M}$.
    \item If $\mathcal{F}_\lambda=\{f\}$, then there is $\delta>0$ such that for every $C\not\in \tfrac{\lambda^{\ell+1}}{f'(\lambda)}\left(\widetilde{\mathsf{A}}_{\lambda}-\zeta\right)$ with $|C|<\delta$, the parameter $C\lambda^{pn}+\lambda$ is not in $\mathcal{M}$ for all sufficiently large $n$.
    \end{enumerate}
\end{theorem}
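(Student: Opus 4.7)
The plan is to attack part (i) constructively, building explicit power series in $\mathcal{P}$ whose roots fill out the asymptotic image, and to attack part (ii) by a normal-families and uniqueness argument. Both rely on the identity
\[\zeta \;=\; \sum_{k\geq 0} c_{\ell+1+k}\lambda^k,\]
a consequence of $f(\lambda)=0$ and the rational-type form that, in particular, places $\zeta$ itself in $\widetilde{\mathsf{A}}_\lambda$.

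For part (i), given $\eta = \sum_{k\geq 0} a_k\lambda^k \in \widetilde{\mathsf{A}}_\lambda$ with $a_k\in\{-1,0,+1\}$, I would introduce the perturbed series
\[g_\eta(z) \;:=\; f(z) + z^{\ell+1+pn}\sum_{k\geq 0}\bigl(a_k - c_{\ell+1+k}\bigr)z^k.\]
The $p$-periodicity of $(c_j)_{j>\ell}$ combined with the shift by a multiple of $p$ ensures that the coefficient of $g_\eta$ at position $\ell+1+pn+k$ is exactly $a_k$, so $g_\eta\in\mathcal{P}$; direct evaluation then gives $g_\eta(\lambda)=\lambda^{\ell+1+pn}(\eta-\zeta)$. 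Since $f'(\lambda)\neq 0$ by Theorem~\ref{thm:asymSol}, Rouche's theorem locates a unique root $\mu_\eta$ of $g_\eta$ in a small disk around $\lambda$, and a Taylor expansion yields
\[\mu_\eta \;=\; \lambda + C_\eta\,\lambda^{pn}\bigl(1+o(1)\bigr), \qquad C_\eta \;=\; -\tfrac{\lambda^{\ell+1}}{f'(\lambda)}(\eta-\zeta).\]
Using $\widetilde{\mathsf{A}}_\lambda=-\widetilde{\mathsf{A}}_\lambda$ to absorb signs, $C_\eta$ realizes the full set $\tfrac{\lambda^{\ell+1}}{f'(\lambda)}(\widetilde{\mathsf{A}}_\lambda-\zeta)$ as $\eta$ varies, placing $\mu_\eta\in\mathcal{M}$.

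To upgrade this pointwise statement to the claimed \emph{neighborhood} in $\mathcal{M}$, I would exploit the freedom in the tail of $\eta$: after fixing any finite truncation of the $a_k$'s, the remaining coefficients are independent choices in $\{-1,0,+1\}$, so the achievable $\eta$'s trace out a rescaled copy of $\widetilde{\mathsf{A}}_\lambda$. If $\eta$ can be chosen in $\mathrm{int}(\widetilde{\mathsf{A}}_\lambda)$---nonempty under the hypothesis $|\lambda|\leq 2^{-1/2}$ via the heavy overlap of the three-piece IFS---the image $\{\mu_\eta\}$ covers an open disk around the nominal parameter, placing it in $\mathrm{int}(\mathcal{M})$; since such $\eta$'s are dense in $\widetilde{\mathsf{A}}_\lambda$, the $\varepsilon$-approximation $|C-C'|<\varepsilon$ of the statement follows. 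This step is the main obstacle for (i): one must verify that $\mathrm{int}(\widetilde{\mathsf{A}}_\lambda)$ is nonempty and dense near the relevant $\eta$, and that the Rouche estimate is uniform in $\eta$ across this family.

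For part (ii), I argue by contradiction. Suppose $\mu_n := \lambda + C\lambda^{pn} \in \mathcal{M}$ for infinitely many $n$, and pick $g_n\in\mathcal{P}$ with $g_n(\mu_n)=0$. The family $\{g_n\}$ is normal on $\mathbb{D}$ (coefficients bounded by $1$); any subsequential limit $g_\infty$ satisfies $g_\infty(\lambda)=0$, and the hypothesis $\mathcal{F}_\lambda=\{f\}$ forces $g_n\to f$ uniformly on compacts. Discreteness of the coefficient set $\{-1,0,+1\}$ then implies that the first $N_n\to\infty$ coefficients of $g_n$ agree with those of $f$. After aligning $N_n = \ell+1+pn_n'$ with $n_n'\to\infty$---using the $p$-periodicity of $f$'s tail---we may write $g_n = f + z^{N_n}\tilde h_n$ with $\tilde h_n(\lambda) = \eta_n-\zeta$ for some $\eta_n\in\widetilde{\mathsf{A}}_\lambda$. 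Taylor expanding $0 = g_n(\mu_n)$ to leading order yields $C = -\tfrac{\lambda^{\ell+1}}{f'(\lambda)}(\eta_n-\zeta)+o(1)$, and passing to a convergent subsequence $\eta_n\to\eta_\infty\in\widetilde{\mathsf{A}}_\lambda$ places $C$ in $\tfrac{\lambda^{\ell+1}}{f'(\lambda)}(\widetilde{\mathsf{A}}_\lambda-\zeta)$, contradicting the hypothesis. The main delicate point here is the alignment of $N_n$ modulo $p$ so that $\tilde h_n$ legitimately represents an element of $\widetilde{\mathsf{A}}_\lambda$.
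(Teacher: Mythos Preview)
The paper does not contain a proof of this theorem. Theorem~\ref{thm:asymCKW} is stated as a cited result of Calegari--Koch--Walker \cite{CKW} and is used as a black box in the proof of Theorem~\ref{thm:bdholem}; no argument for it appears anywhere in the present paper. Consequently there is nothing here to compare your proposal against.

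That said, your sketch is broadly in the spirit of how such asymptotic-similarity results are typically established in this literature (perturbing the tail of the power series, invoking Rouch\'e near a simple zero, and for the converse direction using normal-families compactness together with the uniqueness hypothesis $\mathcal{F}_\lambda=\{f\}$). The points you flag as delicate---the density/interior argument for $\widetilde{\mathsf{A}}_\lambda$ needed to upgrade to an open neighborhood in part~(i), and the alignment of the first index of disagreement modulo $p$ in part~(ii)---are indeed the places where care is required, and a full verification would need to consult \cite{CKW} directly rather than this paper.
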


Observe that this theorem is more descriptive than Theorem~\ref{thm:asymSol}{\bf (ii)} as it describes explicitly which neighborhoods of $\zeta\in\widetilde{\mathsf{A}}_\lambda$ converge in the Hausdorff metric to neighborhoods of $\lambda\in\mathcal{M}$ (see Figure~\ref{fig:buried}). However, this result gives little information about the local topology of $\mathcal{M}$ around $\lambda$. In particular, the question of recognizing points of $\partial\mathcal{M}$ path accessible from $\mathbb{D}\setminus \mathcal{M}$ remains open. Our main result gives a partial answer. 
\begin{remark}
In the following statements it is assumed that $\lambda$ is not real with $\left|\lambda\right|\leq2^{-1/2}$. We will call {\em accessible} those points of $\partial\mathcal{M}$ (respectively $\partial\mathcal{M}_0$) path accessible from $\mathbb{D}\setminus \mathcal{M}$ (respectively $\mathbb{D}\setminus\partial\mathcal{M}_0$).
\end{remark}

\begin{figure}
    \centering
    \begin{picture}(205,205)
    \put(0,0){\includegraphics[scale=0.3]{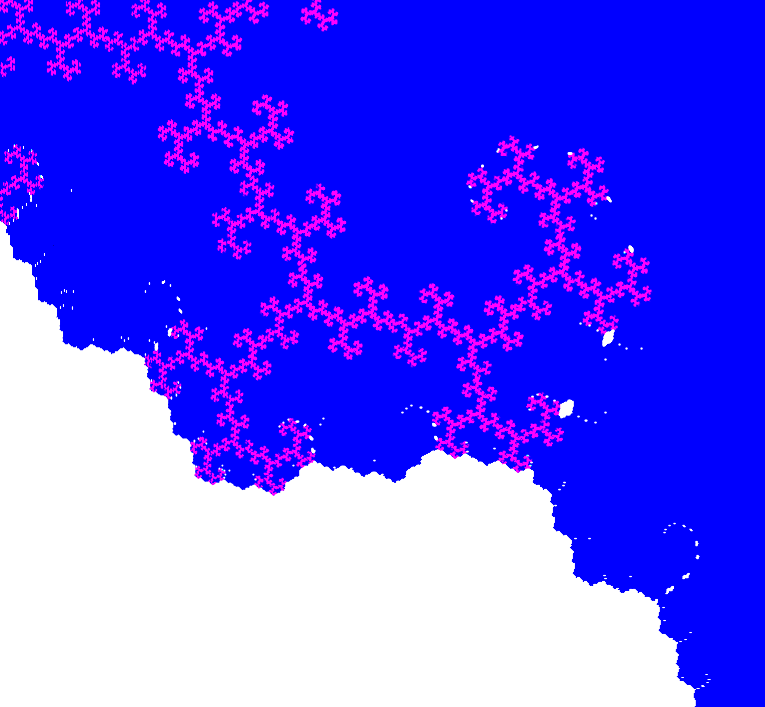}}
    \put(150,148){{\color{cyan}\textbullet}}
    \end{picture}
    \caption{Calegari et al. showed that $\lambda\approx0.371859 + 0.519411{\rm i}$ (marked in cyan) is a point of $\partial\mathcal{M}\cap\partial\mathcal{M}_0$ not path accessible from $\mathbb{D}\setminus\mathcal{M}$.}
    \label{fig:buried}
\end{figure}
\begin{theorem}
\label{thm:bdholem}
    Let $\lambda\in\mathcal{M}$ be such that $\mathcal{F}_\lambda=\{f\}$ with $f$ of rational type $(\ell,p)$.
    Assume also that $f(z)=\sum_{j=0}^\infty c_jz^j$ has finitely many zero coefficients and that its Taylor polynomials, $f_k(z)=\sum_{j=0}^kc_jz^j$ satisfy the following conditions for every $0\leq n\leq p-1$:
    \begin{enumerate}[{\bf(i)}]
        \item $\displaystyle \left|f_{\ell+1+n}(\lambda)\right|>\frac{1}{2}\frac{\left|\lambda^{\ell+1+n+1}\right|}{1-\left|\lambda\right|};$
        \item $\displaystyle \left|f_{\ell+1+n}(\lambda)\right|+\left|f_{\ell+1+n+1}(\lambda)\right|>\frac{\left|\lambda^{\ell+1+n+1}\right| }{1-\left|\lambda\right|};$
        \item Let $Q$ be the solution of $2f_\ell(z)+z^{\ell+1}Q(z)=2f_{\ell+1+n}(z)$. Then for any polynomial $P$ of degree less or equal than $n$ with coefficients in $\{-2,-1,0,+1,+2\}$, with $P\neq Q$ it holds that $\displaystyle 2\left|f_{\ell+1+n}(\lambda)\right|<\left|2f_\ell(\lambda)+\lambda^{\ell+1}P(\lambda)\right|$.
    \end{enumerate}
    Then $\lambda\in\partial\mathcal{M}$ is accessible from a connected component of $\mathbb{D}\setminus\mathcal{M}$.
\end{theorem}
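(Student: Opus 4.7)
The plan is to combine the asymptotic transfer from Theorem~\ref{thm:asymCKW}(ii) with an explicit geometric construction near $\zeta\in\widetilde{\mathsf{A}}_\lambda$. If we exhibit an open, path-connected $W\subset\mathbb{C}\setminus\widetilde{\mathsf{A}}_\lambda$ together with a path $\gamma\colon[0,1)\to W$ with $\lim_{t\to1}\gamma(t)=\zeta$, then $C(t):=\tfrac{\lambda^{\ell+1}}{f'(\lambda)}\bigl(\gamma(t)-\zeta\bigr)$ is a path in $\mathbb{C}\setminus\tfrac{\lambda^{\ell+1}}{f'(\lambda)}(\widetilde{\mathsf{A}}_\lambda-\zeta)$ tending to $0$, and Theorem~\ref{thm:asymCKW}(ii) converts each $C(t)$ into a point $C(t)\lambda^{pn}+\lambda\notin\mathcal{M}$ for all $n$ past a threshold $n(t)$. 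Assembling these into a single path inside one component of $\mathbb{D}\setminus\mathcal{M}$ will produce the desired access to $\lambda$.

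To build $W$ I would exploit the $\lambda^{-p}$-self-similarity of $\widetilde{\mathsf{A}}_\lambda$ about $\zeta$ from Theorem~\ref{thm:asymSol}(i), which reduces the problem to finding a single open $U\subset\mathbb{C}\setminus\widetilde{\mathsf{A}}_\lambda$ inside the self-similarity ball $D_r(\zeta)$, with the property that $U$ and its rescaled image $\zeta+\lambda^p(U-\zeta)$ lie in the same connected component of the complement. Then $W:=\bigcup_{k\geq 0}\bigl(\zeta+\lambda^{pk}(U-\zeta)\bigr)$ is a telescoping tower of rescaled wedges accumulating at $\zeta$, and the path $\gamma$ is obtained by concatenating arcs in each overlap $U\cap(\zeta+\lambda^p(U-\zeta))$ and its $\lambda^p$-images.

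Conditions~(i)--(iii) should supply such a $U$ through an analysis of the instar $\widetilde{\mathsf{I}}^{\ell+1+p}$ around $\zeta$. The key identity, obtained from $f(\lambda)=0$, is $|\lambda|^{-(\ell+1)}|f_{\ell+1+n}(\lambda)|=|\zeta_n-\zeta|$, where $\zeta_n:=\sum_{j=0}^{n}c_{\ell+1+j}\lambda^j$ is the node of $\widetilde{\mathsf{A}}_\lambda$ realizing the length-$(n+1)$ prefix of $\zeta$'s (purely periodic) itinerary, so that the nodal disk containing $\zeta$ at that level has radius $|\lambda|^{n+1}R$ and center $\zeta_n$. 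Condition~(i) then places $\zeta$ in the outer half of this disk; condition~(ii) rules out concentric stacking of the containing disks at consecutive levels $n$ and $n+1$ that would obstruct the outward direction through $\zeta$; and condition~(iii), formulated through the unique polynomial $Q(z)=2\sum_{k=0}^{n}c_{\ell+1+k}z^k$ encoding the correct tail of $f$, is the separation statement that no other length-$(n+1)$ extension $P\neq Q$ with coefficients in $\{-2,\ldots,2\}$ yields a competing nodal disk whose center is as close to the reference point. Taken together over $0\leq n\leq p-1$, these three statements should carve out a definite angular sector $U$ adjacent to $\zeta$ that avoids every nodal disk of $\widetilde{\mathsf{I}}^{\ell+1+p}$, and hence $\widetilde{\mathsf{A}}_\lambda$ itself.

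The main obstacle is twofold. First, the sectors obtained at the $p$ levels $n=0,\ldots,p-1$ must be knitted together consistently so that after one full period of $\lambda^p$-iteration the region $U$ genuinely closes up with $\zeta+\lambda^p(U-\zeta)$ into a connected $W$; the finite-zero-coefficient hypothesis enters here to control which nodal-disk alignments are combinatorially admissible and to keep the strict inequality in~(iii) non-degenerate at every level, in the spirit of Lemma~\ref{lem:onePOM}. Second, since Theorem~\ref{thm:asymCKW}(ii) only provides a pointwise threshold $n(C)$, extracting an honest continuous path in $\mathbb{D}\setminus\mathcal{M}$ requires upgrading $n(C)$ to a uniform bound on the compact fundamental annulus $\{r|\lambda|^p\leq|C-\zeta|\leq r\}\cap W$ and then iterating; this should follow from a standard compactness argument once the construction of $U$ is in place. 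The transfer from accessibility of $\zeta$ in $\widetilde{\mathsf{A}}_\lambda$ to accessibility of $\lambda$ in $\mathcal{M}$ is then just the change of coordinates $C\mapsto C\lambda^{pn}+\lambda$.
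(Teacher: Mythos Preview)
Your overall strategy is the paper's: build a connected open set in $\mathbb{C}\setminus\widetilde{\mathsf{A}}_\lambda$ accumulating at $\zeta$, exploit the $\lambda^{-p}$-self-similarity to reduce to one period, and transfer via Theorem~\ref{thm:asymCKW}(ii). What you are missing is the concrete construction that makes conditions (i)--(iii) line up, and without it the proposal does not go through. The paper does not use a vague ``angular sector'': for each $n\ge 0$ it takes the open disk $B_n$ centered at the \emph{reflection} $\omega_n:=2\zeta-\zeta_n$ of the node about $\zeta$, with radius $r_n:=2|\zeta-\zeta_n|-|\lambda|^{n+1}R$ chosen so that $B_n$ is externally tangent to the nodal disk $\widetilde{\mathsf{D}}^{\mathtt{\bm b}|n}$. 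With this specific choice, the hypotheses become literal equivalences: (i) $\Leftrightarrow r_n>0$; (ii) $\Leftrightarrow r_n+r_{n+1}>|\omega_n-\omega_{n+1}|$, i.e.\ $B_n\cap B_{n+1}\neq\emptyset$; and (iii) $\Leftrightarrow B_n$ is disjoint from every \emph{other} nodal disk of $\widetilde{\mathsf{I}}^n$. Your reading of (ii) as ``ruling out concentric stacking'' and of (iii) as a closeness statement relative to $\zeta$ are both off: condition (ii) is exactly the chain connectivity, and (iii) is a separation inequality centered at $\omega_n$, not at $\zeta$.

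Once the $B_n$ are in hand, your two ``obstacles'' evaporate. From $f(\lambda)=0$ and the preperiodicity one checks $\lambda^p(\omega_n-\zeta)=\omega_{n+p}-\zeta$, so $B_{n+p}=\zeta+\lambda^p(B_n-\zeta)$; the ``knitting'' across a period is simply the overlap $B_{p-1}\cap B_p$, already furnished by (ii) at $n=p-1$. The uniformity worry about $n(C)$ is handled by the same invariance: one rescales so that the compact fundamental block $\bigcup_{0\le n\le p}B_n$ (centered at $\zeta$) sits inside the $\delta$-ball of Theorem~\ref{thm:asymCKW}(ii), applies the theorem once, and the $\lambda^{pk}$-images automatically inherit the conclusion. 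Finally, the finite-zero-coefficient hypothesis is less subtle than you suggest: it merely forces every $c_j=0$ into the prefix $j\le\ell$, so the periodic itinerary $(c_{\ell+1}\cdots c_{\ell+p})^\infty$ of $\zeta$ lies in $\Sigma^\infty$ and the nodes $\zeta_n$ are unambiguous.
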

An immediate corollary is then 
\begin{corollary}
\label{cor:sharedBd}
  Let $\lambda\in\mathcal{M}$ be such that $\mathcal{F}_\lambda=\{f\}$ where $f$ satisfies the hypothesis of Theorem~\ref{thm:bdholem} and has no zero coefficients. Then $\lambda\in\partial\mathcal{M}\cap\partial\mathcal{M}_0$ and it is accessible from a connected component of $\mathbb{D}\setminus\mathcal{M}$.
\end{corollary}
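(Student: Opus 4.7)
The plan is to apply Theorem~\ref{thm:bdholem} verbatim for all of the claims about $\mathcal{M}$, and then to upgrade these conclusions to $\mathcal{M}_0$ via the inclusion $\mathcal{M}_0\subset\mathcal{M}$, once I verify that the strengthened hypothesis of no zero coefficients forces $\lambda$ to belong to $\mathcal{M}_0$ to begin with.

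The first step is to observe that $\lambda\in\mathcal{M}_0$. Since $f\in\mathcal{P}$ satisfies $f(\lambda)=0$ and every coefficient of $f$ lies in $\{-1,1\}$, the power-series description
\[
\mathcal{M}_0=\left\{\mu\in\mathbb{D}~\bigg|~\exists g\in\mathcal{P},~g(\mu)=\sum_{j=0}^\infty c_j\mu^j=0,~~c_j\in\{-1,1\}\right\}
\]
recorded in Section~\ref{sect:ovrlp} puts $\lambda$ in $\mathcal{M}_0$ immediately. An equivalent route would go through Lemma~\ref{lem:onePOM}(i): the absence of zero coefficients forces $|O_\lambda|=1$, and the $0$-symmetry of $\mathsf{A}_\lambda$ (which passes to $O_\lambda=-O_\lambda$) then forces the unique overlap point to be $0$. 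Either way, $0\in\mathsf{A}_\lambda$ and hence $\lambda\in\mathcal{M}_0$.

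Since every hypothesis of Theorem~\ref{thm:bdholem} remains in force, I would next invoke that theorem to extract both $\lambda\in\partial\mathcal{M}$ and a path $\gamma:[0,1]\to\mathbb{D}$ with $\gamma([0,1))$ contained in a single connected component of $\mathbb{D}\setminus\mathcal{M}$ and $\gamma(1)=\lambda$. To finish, I would use the inclusion $\mathbb{D}\setminus\mathcal{M}\subset\mathbb{D}\setminus\mathcal{M}_0$, so that the same path $\gamma$ witnesses $\lambda\in\overline{\mathbb{D}\setminus\mathcal{M}_0}$; combined with $\lambda\in\mathcal{M}_0$ this places $\lambda$ in $\partial\mathcal{M}_0$, and the accessibility statement in the corollary is literally the one supplied by Theorem~\ref{thm:bdholem}. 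I do not expect any new obstacle here: Theorem~\ref{thm:bdholem} carries all the analytic weight, and this corollary reduces to bookkeeping built on the power-series characterization of $\mathcal{M}_0$ and the trivial containment $\mathcal{M}_0\subset\mathcal{M}$.
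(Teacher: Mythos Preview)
Your proposal is correct and matches the paper's approach: the paper presents this as an ``immediate corollary'' of Theorem~\ref{thm:bdholem} with no separate proof, and your argument supplies exactly the bookkeeping one expects---namely, that the no-zero-coefficients hypothesis places $\lambda$ in $\mathcal{M}_0$ via the power-series description of Section~\ref{sect:ovrlp}, after which the accessibility path from Theorem~\ref{thm:bdholem} (lying in $\mathbb{D}\setminus\mathcal{M}\subset\mathbb{D}\setminus\mathcal{M}_0$) also witnesses $\lambda\in\partial\mathcal{M}_0$.
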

Restricting the assumptions on the coefficients on the power series $f$, we also obtain
\begin{theorem}
\label{thm:bdholem0}
    Let $\lambda\in\mathcal{M}$ be such that $\mathcal{F}_\lambda=\{f\}$ with $f$ of rational type $(\ell,p)$.
    Assume also that $f(z)$ has no zero coefficients and that its Taylor polynomials, $f_k(z)=\sum_{j=0}^kc_jz^j$ satisfy conditions {\bf(i)} and {\bf(ii)} of Theorem~\ref{thm:bdholem}, together with
    \begin{itemize}
        \item[{\bf(iii')}] Let $Q$ be the solution of $f_\ell(z)+z^{\ell+1}Q(z)=f_{\ell+1+n}(z)$. Then for any polynomial $P$ of degree less or equal than $n$ with coefficients in $\{-1,0,+1\}$, with $P\neq Q$ it holds that $\displaystyle \left|f_{\ell+1+n}(\lambda)\right|<\left|f_\ell(\lambda)+\lambda^{\ell+1}P(\lambda)\right|$.
    \end{itemize}
    Then $\lambda\in\partial\mathcal{M}_0$ is accessible from a connected component of $\mathbb{D}\setminus\mathcal{M}_0$.
\end{theorem}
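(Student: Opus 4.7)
My plan is to adapt the proof of Theorem~\ref{thm:bdholem} to the $2$-map IFS setting. Two simplifications come from the no-zero-coefficient hypothesis: by Lemma~\ref{lem:onePOM}, $|O_\lambda|=1$ with $O_\lambda=\{0\}$, so $\lambda\in\mathcal{M}_0$; and by the remark following Theorem~\ref{thm:asymSol}, Theorems~\ref{thm:asymSol} and~\ref{thm:asymCKW} apply verbatim with $\mathsf{A}_\lambda$ and $\mathcal{M}_0$ replacing $\widetilde{\mathsf{A}}_\lambda$ and $\mathcal{M}$. Via this adapted Theorem~\ref{thm:asymCKW}(ii), it suffices to construct a continuous path $\gamma\colon[0,1)\to\mathbb{C}\setminus\mathsf{A}_\lambda$ with $\gamma(t)\to\zeta$: for $n$ sufficiently large, each scaled-translated image $\tfrac{\lambda^{\ell+1}}{f'(\lambda)}(\gamma(t)-\zeta)\lambda^{pn}+\lambda$ lies in $\mathbb{D}\setminus\mathcal{M}_0$ and the union of these images accumulates at $\lambda$.

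To construct $\gamma$, I translate the hypotheses into geometric data near $\zeta$. Since $f(\lambda)=0$, one has the identities $f_{\ell+1+n}(\lambda)=-\lambda^{\ell+n+2}\zeta_{n+1}$, where $\zeta_{n+1}:=\sum_{j=0}^\infty c_{\ell+n+2+j}\lambda^j$, and $Q(\lambda)=\sigma_n:=\sum_{j=0}^n c_{\ell+1+j}\lambda^j$ is the center of the nodal disk $\mathsf{D}^{\mathtt{\bm w}_\zeta|n}\ni\zeta$ at level $n+1$. Then (i) becomes $|\zeta_{n+1}|>R/2$ (placing $\zeta$ in the outer half of its nodal disk), (ii) becomes $|\zeta_{n+1}|+|\lambda||\zeta_{n+2}|>R$ (a joint estimate across consecutive levels), and (iii') becomes the assertion that $\sigma_n$ is the \emph{unique} closest level-$(n+1)$ node to $\zeta$ among all nodes $\sum p_j\lambda^j$ with $p_j\in\{-1,0,+1\}$. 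The latter family contains every half-difference of $2$-map nodal centers at level $n+1$, and so controls the separation of nearby nodal disks of $\mathsf{A}_\lambda$.

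With these reformulations, for each $n\in\{0,\ldots,p-1\}$ I select a test point $w_n$ in the annular gap between the level-$n$ and level-$(n+1)$ nodal disks of $\zeta$'s itinerary, placed on the far side of the inner disk from $\sigma_n$. Condition (i) guarantees this gap is nonempty, so $w_n\notin\mathsf{I}^{n+1}\supseteq\mathsf{A}_\lambda$; condition (ii) keeps the segment $[w_n,w_{n+1}]$ outside both consecutive nodal disks; condition (iii') excludes every other level-$(n+1)$ nodal disk of $\mathsf{A}_\lambda$ from meeting the segment, because any such intruder's center would be strictly closer to $\zeta$ than $\sigma_n$, contradicting uniqueness. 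Concatenating these $p$ segments and iterating via the $\lambda^{-p}$-self-similarity of $\mathsf{A}_\lambda$ about $\zeta$ (Theorem~\ref{thm:asymSol}(i)) produces $\gamma$.

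The main obstacle I anticipate is the bookkeeping at every scale: showing that no nodal disk---at any level, not just at the one matching the current period---intersects the constructed path. Self-similarity handles the dominant terms, but deep nodal disks whose centers happen to land near $\zeta$ must be ruled out separately; the \emph{strict} inequality in (iii'), combined with the fact that only finitely many polynomials $P$ compete at each level within a single period, supplies the uniform quantitative separation needed. Once $\gamma$ is built, the final transfer from $\mathbb{C}\setminus\mathsf{A}_\lambda$ to $\mathbb{D}\setminus\mathcal{M}_0$ via Theorem~\ref{thm:asymCKW}(ii) is a direct application of the asymptotic similarity, and places $\lambda$ on the boundary of a single connected component of $\mathbb{D}\setminus\mathcal{M}_0$ by continuity.
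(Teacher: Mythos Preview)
Your overall plan---build a connected set in $\mathbb{C}\setminus\mathsf{A}_\lambda$ accumulating at $\zeta$ and push it to $\mathbb{D}\setminus\mathcal{M}_0$ via the $\mathcal{M}_0$-version of Theorem~\ref{thm:asymCKW}{\bf(ii)}---is exactly the paper's strategy, and your algebraic reformulations of {\bf(i)}, {\bf(ii)}, {\bf(iii')} are correct. In particular the observation that {\bf(iii')} says $\sigma_n=Q(\lambda)$ is the strictly nearest $\{-1,0,+1\}$-node to $\zeta$ is a clean way to read that hypothesis.

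Where your proposal departs from the paper, and where it has a genuine gap, is the geometric construction. The paper does \emph{not} use test points and straight segments; it uses open disks $B_n$ centered at the reflection $\omega_n:=2\zeta-\sigma_n$ with radius $r_n:=2|\zeta-\sigma_n|-|\lambda|^{n+1}R$, tangent externally to $\mathsf{D}^{\mathtt{\bm b}|n}$. The hypotheses are engineered to match this choice \emph{exactly}: {\bf(i)} is $r_n>0$; {\bf(ii)} is $r_n+r_{n+1}>|\omega_n-\omega_{n+1}|$; and {\bf(iii')} is equivalent to $|\omega_n-\nu|>r_n+|\lambda|^{n+1}R$ for every $2$-map node $\nu\neq\sigma_n$, since $|\omega_n-\nu|=2\bigl|\zeta-\tfrac{\sigma_n+\nu}{2}\bigr|$ and $\tfrac{\sigma_n+\nu}{2}$ is precisely a $\{-1,0,+1\}$-node. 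Your segment construction does not inherit these equivalences. Concretely: {\bf(i)} does \emph{not} give $w_n\notin\mathsf{I}^{n+1}$ (it only places $\zeta$ in the outer half of a single nodal disk); {\bf(ii)} is not a statement about segments avoiding disks; and your justification for {\bf(iii')} is incorrect---an intruding $2$-map disk centered at $\nu$ does not force $\nu$ closer to $\zeta$ than $\sigma_n$, it forces the \emph{midpoint} $\tfrac{\sigma_n+\nu}{2}$ closer, and that comparison only emerges when you measure from the reflected point $\omega_n$, not from an unspecified $w_n$ or along a segment. Replace the test points and segments by the reflected disks $B_n$ and the three conditions become literal, after which periodicity of the itinerary of $\zeta$ (your last paragraph) extends the chain to all $n\ge0$.
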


In view of Corollary~\ref{cor:sharedBd} and experimental evidence, it seems reasonable that the assumptions of Theorem~\ref{thm:bdholem0} imply that condition {\bf(iii)} of Theorem~\ref{thm:bdholem} must hold. Even better:
\begin{conjecture}
Every parameter $\lambda\in\partial\mathcal{M}\cap\partial\mathcal{M}_0$ belongs to the boundary of the connected component of $\mathbb{D}\setminus\mathcal{M}_0$ containing $0$. 
\end{conjecture}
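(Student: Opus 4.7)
The plan is to reduce the conjecture to a single structural lemma about holes of $\mathcal{M}_0$ and then attempt that lemma via the asymptotic-similarity machinery of Theorems~\ref{thm:asymSol} and~\ref{thm:asymCKW}.

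Let $\mathcal{U}_0$ denote the connected component of $\mathbb{D}\setminus\mathcal{M}_0$ containing $0$. The punctured disk $\{0<|\lambda|<1/2\}$ lies in $\mathcal{U}_0$: for such $\lambda$ the images $\mathfrak{s}_\pm(D_R)$ are disjoint so $\mathsf{A}_\lambda$ is a Cantor set, and the bound $|1/\lambda|>R=1/(1-|\lambda|)$ forces $0\notin\mathsf{A}_\lambda$. Write $\mathbb{D}\setminus\mathcal{M}_0=\mathcal{U}_0\sqcup\bigsqcup_i H_i$ with the $H_i$ the bounded ``holes'' of $\mathcal{M}_0$; since $\mathcal{M}_0$ is closed (attractors depend continuously on $\lambda$), $\partial H_i\subset\mathcal{M}_0\subset\mathcal{M}$. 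The key lemma to prove is $H_i\subset\mathcal{M}$ for every $i$. Given the lemma, the conjecture is immediate: if $\lambda^*\in\partial\mathcal{M}\cap\partial\mathcal{M}_0$ is approached by points $\mu_n\in\mathbb{D}\setminus\mathcal{M}\subset\mathbb{D}\setminus\mathcal{M}_0$, then no $\mu_n$ can lie in any $H_i$, so $\mu_n\in\mathcal{U}_0$ and $\lambda^*\in\overline{\mathcal{U}_0}\setminus\mathcal{U}_0=\partial\mathcal{U}_0$.

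To prove the lemma, suppose for contradiction that some $\mu\in H_i$ has $\mathsf{A}_\mu$ disconnected. Then $\mu$ lies in a component $V$ of $\mathbb{D}\setminus\mathcal{M}\subset\mathbb{D}\setminus\mathcal{M}_0$; since $V$ is connected and meets the component $H_i$ of $\mathbb{D}\setminus\mathcal{M}_0$ at $\mu$, we get $V\subseteq H_i$. This would produce a hole $V$ of $\mathcal{M}$ strictly nested inside the hole $H_i$ of $\mathcal{M}_0$, with ``collar'' $H_i\setminus V\subset\mathcal{M}\setminus\mathcal{M}_0$. I would rule out this configuration by applying Theorem~\ref{thm:asymSol}(ii) and its $\mathcal{M}_0$-counterpart at a rational-type parameter $\lambda^*\in\partial V\cap\partial\mathcal{M}_0$ with $\mathcal{F}_{\lambda^*}=\{f\}$ (such parameters are dense on $\partial\mathcal{M}_0$ by Solomyak and Bandt--Hung): $\mathcal{M}$ is locally modelled by $\widetilde{\mathsf{A}}_{\lambda^*}$ about the corresponding base point $\zeta$, and $\mathcal{M}_0$ by $\mathsf{A}_{\lambda^*}$ about the same $\zeta$. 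The nested configuration would then descend to a bounded complementary component of $\widetilde{\mathsf{A}}_{\lambda^*}$ strictly inside one of $\mathsf{A}_{\lambda^*}$, accumulating at $\zeta$, and a contradiction would rest on the geometric \emph{filling} statement that $\widetilde{\mathsf{A}}_{\lambda^*}$ has no bounded complementary components beyond those inherited from $\mathsf{A}_{\lambda^*}$. Heuristically this should hold because iterates of the central branch $\mathfrak{s}_{\mathtt{O}}$ contract toward $0$ through every bounded hole of $\mathsf{A}_{\lambda^*}$ and produce copies at every scale.

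The main obstacle is precisely this filling statement, together with the difficult subcase in which $\overline{V}\subset H_i$ strictly (so $\partial V\cap\partial\mathcal{M}_0=\emptyset$ and the model-comparison has no accessible base point on $\partial\mathcal{M}_0$). The latter would require an ``annular non-existence'' statement saying that $\mathcal{M}\setminus\mathcal{M}_0$ cannot form a non-degenerate collar around a hole of $\mathcal{M}$. Both ingredients are consistent with the experimental picture and the results earlier in this paper, but each appears open in the literature, and together they constitute the crux of the conjecture.
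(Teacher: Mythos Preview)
The statement you are attempting is presented in the paper as an open \emph{conjecture}; the authors offer no proof, only experimental motivation following Corollary~\ref{cor:sharedBd}. There is therefore nothing in the paper to compare your argument against.

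Your reduction is sound: if every component $H_i$ of $\mathbb{D}\setminus\mathcal{M}_0$ other than $\mathcal{U}_0$ were contained in $\mathcal{M}$, the conjecture would follow exactly as you say. But the attempt on that lemma is not a proof, and to your credit you flag this yourself. Two concrete issues beyond those you name: the claim that rational-type parameters with $|\mathcal{F}_\lambda|=1$ are \emph{dense} on $\partial\mathcal{M}_0$ is not what Theorems~\ref{thm:uncountLa}--\ref{thm:uncountLaCant} assert (they give uncountably many such $\lambda$, not density), and Theorems~\ref{thm:asymSol}--\ref{thm:asymCKW} require $|\lambda|\le 2^{-1/2}$, so the asymptotic models are not available everywhere on $\partial\mathcal{M}_0$. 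The ``filling'' heuristic is also weaker than it sounds: iterating $\mathfrak{s}_{\mathtt{O}}$ only produces scaled copies of $\mathsf{A}_{\lambda^*}$ accumulating at $0$, which says nothing about bounded complementary components of $\mathsf{A}_{\lambda^*}$ located away from the origin, and in any case the comparison you need is local near $\zeta$, not near $0$. Finally, the subcase $\overline{V}\subset H_i$ (no common boundary with $\partial\mathcal{M}_0$) is not merely a technical annoyance---it is precisely the configuration in which your model-comparison strategy has no foothold, and ruling it out is essentially as hard as the original conjecture. What you have written is an honest reformulation into equivalent-or-harder open problems, not a proof; the conjecture remains open.
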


%%%%%%%%%%%%%%%%%%%%%%%%%%%%%%%%%%%%%%%%%%%%%%%%%%%%%%%%%%%%
%%%%%%%%%%%%%%%% P R O O F  T H E O R E M %%%%%%%%%%%%%%%%%%
%%%%%%%%%%%%%%%%%%%%%%%%%%%%%%%%%%%%%%%%%%%%%%%%%%%%%%%%%%%%
\section{Proof of the Main Theorems}
\label{sect:pfMainThm}
Here we prove Theorems \ref{thm:bdholem} and \ref{thm:bdholem0} in this section. In both cases, the idea of the proof is to construct locally, a connected chain of open disks outside $\widetilde{\mathsf{A}}_\lambda$ (or $\mathsf{A}_\lambda$) that converges to $\zeta=-\lambda^{-(\ell+1)}\sum_{j=0}^\ell c_j\lambda^j$, and conclude by Theorem \ref{thm:asymCKW}{\bf (ii)} that $\lambda$ is accessible, hence on the boundary of a hole. The restrictions on the Taylor polynomials show up when obtaining the conditions for such a chain to exists.

We first need a lemma
\begin{lemma}
\label{lem:selfsimD}
Let $\lambda\in\mathcal{M}$ be such that $\mathcal{F}_\lambda=\{f\}$ with $f$ of rational type $(\ell,p)$, and where $c_j$ can hold the value $0$ only in the range $0<j\leq \ell$. Let $\xi\in O_\lambda$ have itineraries $\mathtt{\bm a}=a_0a_1a_2\cdots$ and $\overline{\mathtt{\bm a}}=\overline{a}_0\overline{a}_1\overline{a}_2\cdots$ where $a_j=-\overline{a}_j=c_j$ if $c_j\neq0$, and otherwise $a_j=\overline{a}_j=-$ or $+$.\\
Then for every $0\leq n\leq p-1$ the sets $\big( \mathsf{D}^{\mathtt{\bm a}|\ell+n+p}\cup \mathsf{D}^{\overline{\mathtt{\bm a}}|\ell+n+p} \big)$ and $\big( \mathsf{D}^{\mathtt{\bm a}|\ell+n}\cup \mathsf{D}^{\overline{\mathtt{\bm a}}|\ell+n} \big)$ are similar about the common point $\xi$, with scaling factor $\lambda^{-p}$.
\end{lemma}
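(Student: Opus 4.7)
The plan is to exhibit an explicit similarity $T(z)=\xi+\lambda^{p}(z-\xi)$ fixing the overlap point $\xi$, show that $T$ carries the pair of larger nodal disks onto the pair of smaller ones, and then invert. The whole argument reduces to unpacking that the tail sequences $(a_j)_{j>\ell}$ and $(\overline{a}_j)_{j>\ell}$ are strictly $p$-periodic.

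\textbf{Step 1 (set up periodicity).} Since $f$ is of rational type $(\ell,p)$, the coefficients $(c_j)_{j>\ell}$ are $p$-periodic. By hypothesis $c_j=0$ can occur only for $0<j\leq\ell$, so $c_j\in\{-1,+1\}$ when $j>\ell$; therefore $a_j=c_j$ and $\overline{a}_j=-c_j$ for every $j>\ell$, and both tails are themselves $p$-periodic. Record also that $\xi=\pi_\lambda(\mathtt{\bm a})=\pi_\lambda(\overline{\mathtt{\bm a}})$, so $\xi$ lies in $\mathsf{D}^{\mathtt{\bm a}|k}\cap\mathsf{D}^{\overline{\mathtt{\bm a}}|k}$ for every $k\geq 0$, in particular it is common to all four disks in the statement.

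\textbf{Step 2 (key identity).} For any $k\geq\ell$, compute
\[
  \xi-\nu_{\mathtt{\bm a}|k+p}=\sum_{j=k+p+1}^\infty a_j\lambda^j=\sum_{i=k+1}^\infty a_{i+p}\lambda^{i+p}=\lambda^p\sum_{i=k+1}^\infty a_i\lambda^i=\lambda^p\bigl(\xi-\nu_{\mathtt{\bm a}|k}\bigr),
\]
where the reindexing $a_{i+p}=a_i$ is legal because every index $i\geq k+1>\ell$ lies in the periodic range. The identical calculation with $\overline{\mathtt{\bm a}}$ in place of $\mathtt{\bm a}$ yields $\xi-\nu_{\overline{\mathtt{\bm a}}|k+p}=\lambda^p(\xi-\nu_{\overline{\mathtt{\bm a}}|k})$. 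In other words, the affine map $T(z)=\xi+\lambda^p(z-\xi)$ sends $\nu_{\mathtt{\bm a}|k}$ to $\nu_{\mathtt{\bm a}|k+p}$ and $\nu_{\overline{\mathtt{\bm a}}|k}$ to $\nu_{\overline{\mathtt{\bm a}}|k+p}$ whenever $k\geq\ell$.

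\textbf{Step 3 (conclude).} Specialize to $k=\ell+n$ with $0\leq n\leq p-1$. Since $T$ is a similarity with multiplier $\lambda^p$, it scales radii by $|\lambda|^p$, and the radii of $\mathsf{D}^{\mathtt{\bm a}|\ell+n}$ and $\mathsf{D}^{\mathtt{\bm a}|\ell+n+p}$ are $|\lambda|^{\ell+n+1}R$ and $|\lambda|^{\ell+n+p+1}R$ respectively, which match. Therefore
\[
  T\bigl(\mathsf{D}^{\mathtt{\bm a}|\ell+n}\cup\mathsf{D}^{\overline{\mathtt{\bm a}}|\ell+n}\bigr)=\mathsf{D}^{\mathtt{\bm a}|\ell+n+p}\cup\mathsf{D}^{\overline{\mathtt{\bm a}}|\ell+n+p}.
\]
Inverting, $T^{-1}(z)=\xi+\lambda^{-p}(z-\xi)$ is a similarity of ratio $\lambda^{-p}$ fixing $\xi$ that carries the inner pair of disks onto the outer pair, which is exactly the claim.

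\textbf{Anticipated difficulty.} There is no real analytic obstacle: the lemma is essentially a bookkeeping statement that translates pre-periodicity of coefficients into a rigid self-similarity of nested pairs of nodal disks about the overlap point $\xi$. The only delicate point is the index accounting in Step 2 — the periodicity relation $a_{j+p}=a_j$ must hold throughout the range of summation, which is the reason for the hypothesis that zero coefficients are confined to $0<j\leq\ell$ and for the restriction $k\geq\ell$ (hence $n\geq 0$) in the conclusion.
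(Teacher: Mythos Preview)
Your proof is correct and follows essentially the same approach as the paper: both establish that the similarity $z\mapsto\xi+\lambda^{p}(z-\xi)$ carries the pair of nodal disks at level $\ell+n$ onto the pair at level $\ell+n+p$, by matching centers and radii. The only cosmetic difference is that the paper derives the center relation from the prefix identity $\sum_{j=0}^{\ell+p}c_j\lambda^j=\lambda^{p}\sum_{j=0}^{\ell}c_j\lambda^j$ (obtained from $f(\lambda)=0$ and the rational-type formula), whereas you obtain it from the equivalent tail identity $\sum_{j>k+p}a_j\lambda^j=\lambda^{p}\sum_{j>k}a_j\lambda^j$ via periodicity; these are two computations of the same fact.
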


\begin{proof}
    Since the set in question is the union of two intersecting closed disks, proving the lemma is equivalent to showing that
    \[\frac{1}{\lambda^p}\left(\left(\mathsf{D}^{\mathtt{\bm a}|\ell+n+p}\cup \mathsf{D}^{\overline{\mathtt{\bm a}}|\ell+n+p}\right)-\xi\right)=\left(\mathsf{D}^{\mathtt{\bm a}|\ell+n}\cup \mathsf{D}^{\overline{\mathtt{\bm a}}|\ell+n}\right)-\xi.\]
    In order to simplify the expressions, we only prove this in the case $n=0$; but the argument is identical for $0<n\leq p-1$.

    Observe that since $\lambda$ is a root of $f(z)$, formula \eqref{f-rat} can be re-written
    \[ c_{\ell+1}\lambda^{\ell+1}+\ldots+c_{\ell+p}\lambda^{\ell+p}=(\lambda^p-1)(c_0+c_1\lambda+\ldots+c_\ell\lambda^\ell)\]
    or equivalently
\[c_0+c_1\lambda+\ldots+c_{\ell+p}\lambda^{\ell+p}=\lambda^p(c_0+c_1\lambda+\ldots+c_\ell\lambda^\ell).\]
    Consequently,
    {\small \begin{align*}
    \left|(a_0+a_1\lambda+\ldots+a_{\ell+p}\lambda^{\ell+p})-(\overline{a}_0+\overline{a}_1\lambda+\ldots+\overline{a}_{\ell+p}\lambda^{\ell+p})\right|&=2\left|c_0+c_1\lambda+\ldots+c_{\ell+p}\lambda^{\ell+p}\right|\\
    &=2\left|\lambda^p\right|\left|c_0+c_1\lambda+\ldots+c_\ell\lambda^\ell\right|
    \end{align*}
    }
    and 
    {\small \begin{eqnarray*}
    \left|(a_0+a_1\lambda+\ldots+a_\ell\lambda^\ell)-(\overline{a}_0+\overline{a}_1\lambda+\ldots+\overline{a}_\ell\lambda^\ell)\right|&=&2\left|c_0+c_1\lambda+\ldots+c_\ell\lambda^\ell\right|
    \end{eqnarray*}
    }
    which shows that the distance of the nodes at levels $\ell+p$ and $\ell$, are multiples of each other.
    
    Finally, recall that we assume $a_j=c_j$ whenever $c_j\neq0$ so the center of the disk $\mathsf{D}^{a_0a_1\cdots a_{\ell+p}}$ can be written as
    \begin{eqnarray*}
    \sum_{j=0}^{\ell+p}a_{j}\lambda^{j}&=&\sum_{\substack{0<j\leq \ell\\ \text{s.t. }c_j=0}}a_j\lambda^j+c_0+c_1\lambda+\ldots+c_{\ell+p}\lambda^{\ell+p}\\
    &=&\sum_{\substack{0<j\leq \ell\\ \text{s.t. }c_j=0}}a_j\lambda^j+\lambda^p(c_0+c_1\lambda+\ldots+c_\ell\lambda^\ell).
    \end{eqnarray*}
    In other words,
    % {\small
    \begin{eqnarray*}
    \sum_{j=0}^{\ell+p}a_j\lambda^j-\sum_{\substack{0<j\leq \ell\\ \text{s.t. }c_j=0}}a_j\lambda^j&=&\lambda^p\sum_{j=0}^\ell c_j\lambda^j=\lambda^p\left(\sum_{j=0}^\ell a_j\lambda^j-\sum_{\substack{0<j\leq \ell\\ \text{s.t. }c_j=0}}a_j\lambda^j\right).
    \end{eqnarray*}
    Then, since $\displaystyle \xi=\sum_{\substack{0<j\leq \ell\\ \text{s.t. }c_j=0}}a_j\lambda^j$ (cf. Note in Section~\ref{sect:ovrlp}), the above equation becomes
    \[a_0+a_1\lambda+\ldots+a_{\ell+p}\lambda^{\ell+p}-\xi=\lambda^p(a_0+a_1\lambda+\ldots+a_{\ell}\lambda^{\ell}-\xi)\]
    that is 
    \[\frac{1}{\lambda^p}\left(\mathsf{D}^{\mathtt{\bm a}|\ell+p}-\xi\right)=\left(\mathsf{D}^{\mathtt{\bm a}|\ell}-\xi\right).\]
    Now, the disk $\mathsf{D}^{\mathtt{\overline{\bm a}}|\ell+k}$ is symmetric to $\mathsf{D}^{\mathtt{\bm a}|\ell+k}$ relative to $\xi$ for any $k\geq0$. Hence, analogous arguments holds for the disks $\mathsf{D}^{\mathtt{\overline{\bm a}}|\ell+p}$ and $\mathsf{D}^{\mathtt{\overline{\bm a}}|\ell}$.
\end{proof}

Recall from Lemma~\ref{lem:diambdd} that $\mathsf{A}_\lambda\subset D_R$ and $\mathsf{A}_\lambda^\mathtt{\bm w}\subset\mathsf{D}^\mathtt{\bm w}$ for any finite word $\mathtt{\bm w}\in \Sigma^n$. Consequently, the above lemma proves the self-similarity of the attractor $\mathsf{A}_\lambda$ at its overlap. Concretely, we find two subsets of $\mathsf{A}_{\lambda}$, centered around the common point $\xi$, which scale into each other:
\begin{corollary}
\label{cor:selfsimA}
Let $\lambda\in\mathcal{M}$ be such that $\mathcal{F}_\lambda=\{f\}$ with $f$ of rational type $(\ell,p)$, and $c_j=0$ only for some $0<j\leq \ell$. Let $\xi\in O_\lambda$ have itineraries $\mathtt{\bm a}=a_0a_1a_2\cdots$ and $\overline{\mathtt{\bm a}}=\overline{a}_0\overline{a}_1\overline{a}_2\cdots$ where $a_j=-\overline{a}_j=c_j$ if $c_j\neq0$, and otherwise $a_j=\overline{a}_j=-$ or $+$.\\
Then for every $0\leq n\leq p-1$ and every integer $k\geq1$ the sets $\big( \mathsf{A}_\lambda^{\mathtt{\bm a}|\ell+n+kp}\cup \mathsf{A}_\lambda^{\overline{\mathtt{\bm a}}|\ell+n+kp} \big)$ and $\big( \mathsf{A}_\lambda^{\mathtt{\bm a}|\ell+n}\cup \mathsf{A}_\lambda^{\overline{\mathtt{\bm a}}|\ell+n} \big)$ are similar about $\xi$, with scaling factor $\lambda^{-kp}$.
\end{corollary}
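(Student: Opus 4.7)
The plan is to promote the disk-level identity of Lemma~\ref{lem:selfsimD} to one at the level of the attractor pieces, and then iterate it $k$ times using the periodicity of the coefficients $c_j$ beyond index $\ell$. The starting observation is that, for any finite word $\mathtt{\bm w}=a_0a_1\cdots a_m$, one has the exact formula
\[
  \mathsf{A}_\lambda^{\mathtt{\bm w}}=\mathfrak{s}_{\mathtt{\bm w}}(\mathsf{A}_\lambda)=\nu_{\mathtt{\bm w}}+\lambda^{m+1}\mathsf{A}_\lambda,
\]
so the piece is controlled by just two data: the node $\nu_{\mathtt{\bm w}}$ and the contraction factor $\lambda^{m+1}$. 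Hence, once one establishes the right scaling relation between the nodes and between the contraction factors, the similarity for the union of two attractor pieces follows at once.

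First I would upgrade the key identity used in the proof of Lemma~\ref{lem:selfsimD}. From $f(\lambda)=0$ and the rational-type decomposition \eqref{f-rat} we already have $\sum_{j=0}^{\ell+p}c_j\lambda^j=\lambda^p\sum_{j=0}^\ell c_j\lambda^j$. Using that the coefficient sequence is periodic of period $p$ for indices past $\ell$, a straightforward induction on $k\ge1$ yields, for every $0\le n\le p-1$,
\[
  \sum_{j=0}^{\ell+n+kp}c_j\lambda^j=\lambda^{kp}\sum_{j=0}^{\ell+n}c_j\lambda^j.
\]
The inductive step is the only real bookkeeping: split the sum up to $\ell+n+(k+1)p$ as the sum up to $\ell+kp$ plus the tail of length $n+p$, then re-index the tail using $c_{\ell+kp+r}=c_{\ell+r}$ and apply the $k=1$ case inside. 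I expect this to be the single substantial computation in the whole argument.

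Next I would translate this into a statement about the nodes. Because all zero coefficients occur in the range $0<j\le\ell$, the itinerary $\mathtt{\bm a}$ satisfies $a_j=c_j$ for every $j>\ell$, so
\[
  \nu_{\mathtt{\bm a}|\ell+n+kp}=\sum_{\substack{0<j\le\ell\\ c_j=0}}a_j\lambda^j+\sum_{j=0}^{\ell+n+kp}c_j\lambda^j=\xi+\lambda^{kp}\bigl(\nu_{\mathtt{\bm a}|\ell+n}-\xi\bigr),
\]
using the identification $\xi=\sum_{c_j=0,\,0<j\le\ell}a_j\lambda^j$ from Note~\ref{note:2} and the proof of Lemma~\ref{lem:selfsimD}. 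Rearranging gives $\nu_{\mathtt{\bm a}|\ell+n+kp}-\xi=\lambda^{kp}(\nu_{\mathtt{\bm a}|\ell+n}-\xi)$; the analogous identity for $\overline{\mathtt{\bm a}}$ follows by the symmetry of $\mathsf{A}_\lambda$ about $\xi$ (the itineraries were chosen to be mirrored wherever $c_j\ne0$).

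Finally I would assemble the pieces. Using $\mathsf{A}_\lambda^{\mathtt{\bm a}|\ell+n+kp}=\nu_{\mathtt{\bm a}|\ell+n+kp}+\lambda^{\ell+n+kp+1}\mathsf{A}_\lambda$, the node identity above, and the factorization $\lambda^{\ell+n+kp+1}=\lambda^{kp}\lambda^{\ell+n+1}$, I get
\[
  \mathsf{A}_\lambda^{\mathtt{\bm a}|\ell+n+kp}-\xi=\lambda^{kp}\bigl(\mathsf{A}_\lambda^{\mathtt{\bm a}|\ell+n}-\xi\bigr),
\]
and identically for $\overline{\mathtt{\bm a}}$. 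Taking the union of the two equalities yields the claimed similarity with scaling factor $\lambda^{-kp}$ about $\xi$. The main obstacle is really just the telescoping computation in the second paragraph; everything else is a clean rewriting of the definitions.
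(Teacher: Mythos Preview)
Your proof is correct and follows essentially the same route as the paper: the corollary is stated there as an immediate consequence of Lemma~\ref{lem:selfsimD}, relying on the node identity $\nu_{\mathtt{\bm a}|\ell+n+kp}-\xi=\lambda^{kp}(\nu_{\mathtt{\bm a}|\ell+n}-\xi)$ established in that lemma's proof and iterated by periodicity. Your write-up is in fact more explicit than the paper's one-line justification, since you use the affine formula $\mathsf{A}_\lambda^{\mathtt{\bm w}}=\nu_{\mathtt{\bm w}}+\lambda^{|\mathtt{\bm w}|}\mathsf{A}_\lambda$ directly rather than appealing only to the containment $\mathsf{A}_\lambda^{\mathtt{\bm w}}\subset\mathsf{D}^{\mathtt{\bm w}}$.
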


We now proceed to the construction of the chain in the complement of $\widetilde{\mathsf{A}}_\lambda$. We exploit the recursive construction of $\widetilde{\mathsf{A}}_\lambda$ to find each disk in the chain: for each $n\geq0$ we find an open disk tangent to the instar $\widetilde{\mathsf{I}}^n$. Moreover, two consecutive disks in the chain must intersect non trivially. Finally, this chain must converge to $\zeta\in\widetilde{\mathsf{A}}_\lambda$.

The parameter $\lambda$ is the root of a unique power series $f\in\mathcal{P}$ whose non-zero coefficients eventually repeat. That is, $f$ is of rational type $(\ell,p)$:
\[f(z)=\sum_{j=0}^\ell c_jz^j+\frac{c_{\ell+1}z^{\ell+1}+\ldots+c_{\ell+p}z^{\ell+p}}{1-z^p}.\]
Now, $\zeta$ is defined to be $-\lambda^{-(\ell+1)}\sum_{j=0}^\ell c_j\lambda^j$, which means (by the above equation) that it can be described with the periodic itinerary $(c_{\ell+1}\cdots c_{\ell+p})^\infty\in\Sigma^\infty$. This itinerary will be the unique one associated to $\zeta$ as long as $\left|\mathcal{F}_\lambda\right|=1$. Since we assume so in the statement of the theorems, there is a unique sequence of nodes converging to $\zeta$,  namely the ones whose itinerary is the truncation of $(c_{\ell+1}\cdots c_{\ell+p})^\infty$ at some index.

Set $\mathtt{\bm b}=b_0b_1\cdots\in\widetilde{\Sigma}^\infty$ where $b_j=c_{\ell+1+j}$ and let $\zeta_n:=\nu_{\mathtt{\bm b}|n}$ be the node in $\widetilde{\mathsf{I}}^n$. Observe that $\zeta_n$ is by definition the center of the nodal disk $\widetilde{\mathsf{D}}^{\mathtt{\bm b}|n}$ and $\zeta$ is a point inside such disk. Therefore, if $\zeta_n$ is far enough from $\zeta$, then $\omega_n=-\zeta_n+2\zeta$, i.e. the reflection of $\zeta_n$ about $\zeta$, will be outside $\widetilde{\mathsf{D}}^{\mathtt{\bm b}|n}$. We can then find an open disk, $B_n$ centered at $\omega_n$ tangent to $\widetilde{\mathsf{D}}^{\mathtt{\bm b}|n}$ (see Figure~\ref{fig:chainball}).
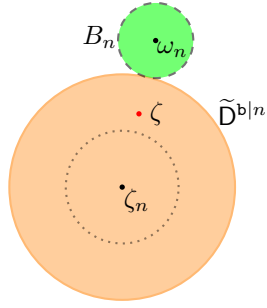
\begin{figure}
\centering
\begin{tikzpicture}
\draw[thick, orange, fill=or, opacity=0.55] (1,0) ellipse (1.5 and 1.5);
\draw[dashed,thick,fill=green!!70,opacity=0.55] (1.445,1.95) ellipse (.5 and .5);
\draw[dotted,thick,black!!70,opacity=0.45] (1,0) ellipse (.75 and .75);
\fill (1,0) circle [radius=1pt] node[below right=-5pt] {\footnotesize $\zeta_n$};
\fill (1.445,1.95) circle [radius=1pt] node[below right=-5pt] {\footnotesize $\omega_n$};
\fill[red] (1.223,0.975) circle [radius=1pt] node[right] {\color{black} \footnotesize $\zeta$};
\node at (2.6,1) {\footnotesize $\widetilde{\mathsf{D}}^{\mathtt{\bm b}|n}$};
\node at (0.7,2) {\footnotesize $B_n$};
\end{tikzpicture}
\caption{Construction of $B_n$, an element in the connected chain of open sets in $\mathbb{C}\setminus\widetilde{\mathsf{A}}_\lambda$. The dotted circle has half the radius of $\widetilde{\mathsf{D}}^{\mathtt{\bm b}|n}$. In order to allow for the existence of $B_n$, the distance between $\zeta$ and $\zeta_n$ must be more than half of the radius of the nodal disk $\widetilde{\mathsf{D}}^{\mathtt{\bm b}|n}$. }
\label{fig:chainball}
\end{figure}

Recall that each nodal disk in $\widetilde{\mathsf{I}}^n$ has a radius of $\left|\lambda^{n+1}\right|(1-\left|\lambda\right|)^{-1}$. Thus, the radius, $r_n$ of $B_n$ is easily found to be \[r_n:=\left|\omega_n-\zeta_n\right|-\frac{\left|\lambda^{n+1}\right|}{1-\left|\lambda\right|}=2\left|\zeta-\zeta_n\right|-\frac{\left|\lambda^{n+1}\right|}{1-\left|\lambda\right|}.\]
\begin{proof}[Proof of Theorem \ref{thm:bdholem}]

\bigskip
We will show that the chain of open disks $\bigcup_{n\geq0}B_n$ is a connected subset of the complement of $\widetilde{\mathsf{A}}_\lambda$, that accumulates at $\zeta$. The claim of the theorem will then follow from Theorem~\ref{thm:asymCKW}{\bf (ii)}, which translates a segment of the chain into a {\em connected}, open set of parameter space that {\em accumulates} at $\lambda$, and is {\em disjoint} from $\mathcal{M}$.

Items {\bf (i)}--{\bf (iii)} below guarantee that the initial $p+1$ disks of the chain exist, are {\em connected} in succession, and are {\em disjoint} from the attractor. The proof concludes by using the periodicity of the itinerary of $\zeta$ to show that the chain consists of an infinite sequence of re-scaled copies of this initial block, {\em accumulating} at $\zeta$. 

%We will show that the chain of open disks $\bigcup_{n\geq0}B_n$ is a connected subset of the complement of $\widetilde{\mathsf{A}}_\lambda$. In fact, it is enough to prove $\widetilde{\mathsf{I}}^{p-1}\cap\bigcup_{n=0}^{p-1}B_n=\emptyset$, since by Theorem~\ref{thm:asymSol} the attractor $\widetilde{\mathsf{A}}_\lambda$ is $(\lambda^{-p})$-self similar about $\zeta$. Moreover, we will prove that $C\lambda^{pm}\in\frac{\lambda^{\ell+1}}{f'(\lambda)}\left(\bigcup_{n\geq0}B_n-\zeta\right)$ for every $C\in\frac{\lambda^{\ell+1}}{f'(\lambda)}\left(\bigcup_{n\geq0}B_n-\zeta\right)$ and $m$ large enough. The claim of the theorem will then follow from Theorem~\ref{thm:asymCKW}{\bf (ii)}.

Each zero coefficient in $f$ doubles the size of the overlap set (cf. the note in Section~\ref{sect:ovrlp}). Thus, the condition that $\left|O_\lambda\right|$ is finite implies that all zero coefficients of the power series appear in the first $\ell+1$ terms (which do not repeat periodically). The choice of which $\xi\in O_\lambda$ to consider is arbitrary but $\zeta$ is always unique because $\left|\mathcal{F}_\lambda\right|=1$.

We consider only the case $\left|O_\lambda\right|=2$ to simplify notation. By assumption, the coefficients of $f$ are strictly preperiodic, and exactly one of them must be zero (cf. Lemma~\ref{lem:onePOM}). Hence, there exists $0<k\leq\ell$ such that $c_k=0$ which implies $O_\lambda=\{\pm \lambda^k\}=\{\pm\xi\}$. We deduce that the itinerary of $\zeta$ is a word in $\Sigma^\infty$ and, thus, $\zeta\in\mathsf{A}_\lambda\subset\widetilde{\mathsf{A}}_\lambda$. 

Let $\mathtt{\bm a},\overline{\mathtt{\bm a}}\in\Sigma^\infty$ be such that $\pi_\lambda(\mathtt{\bm a})=\pi_\lambda(\overline{\mathtt{\bm a}})=\xi$. Therefore, $a_j=-\overline{a}_j=c_j$ for all $j\neq k$ and $a_k=\overline{a}_k=+1$ or $-1$ (since $c_k=0$). In particular, $\xi=f(\lambda)+a_k\lambda^k$. Denote by $\xi_n$ the nodes with itinerary $\mathtt{\bm a}|n$, i.e. $\xi_n=\nu_{\mathtt{\bm a}|n}=\sum_{j=0}^na_j\lambda^j$. The Taylor polynomial $f_{\ell+1+n}(\lambda)$ for $n\geq0$ can then be written as $\xi_{\ell+1+n}-\xi$.

Observe that the itinerary of $\zeta$ is the (left) shift of $\mathtt{\bm a}$ by $\ell$ terms. Indeed, we claim that 
\[\zeta=\mathfrak{s}_{\mathtt{\bm a}|\ell}^{-1}(\xi)\quad\text{ and }\quad \zeta_n=\mathfrak{s}_{\mathtt{\bm a}|\ell}^{-1}(\xi_{\ell+1+n}).\]
Since $\mathfrak{s}_{a_0a_1}(z)=\mathfrak{s}_{a_0}(\mathfrak{s}_{a_1}(z))=\nu_{a_0a_1}+\lambda^2z$ then $\mathfrak{s}_{a_0a_1}^{-1}(z)=\mathfrak{s}_{a_1}^{-1}(\mathfrak{s}_{a_0}^{-1}(z))=\frac{1}{\lambda^2}(z-\nu_{a_0a_1})$. Consequently,
\begin{eqnarray*}
\mathfrak{s}_{\mathtt{\bm a}|\ell}^{-1}(\xi)=\frac{1}{\lambda^{\ell+1}}\left(\xi-\xi_\ell\right)
&=& -\frac{1}{\lambda^{\ell+1}}f_\ell(\lambda)=\zeta\\
\mathfrak{s}_{\mathtt{\bm a}|\ell}^{-1}(\xi_{\ell+1+n})=\frac{1}{\lambda^{\ell+1}}\left(\xi_{\ell+1+n}-\xi_\ell\right) &=& \frac{1}{\lambda^{\ell+1}}\left(f_{\ell+1+n}(\lambda)-f_\ell(\lambda)\right)=\zeta_n.
\end{eqnarray*}
The centers of the disks $B_n$ were defined in terms of $\zeta_n$ and $\zeta$, but we can now rewrite them in terms of the Taylor polynomials
\[\omega_n=-\zeta_n+2\zeta=-\frac{1}{\lambda^{\ell+1}}\left(f_{\ell+1+n}(\lambda)+f_{\ell}(\lambda)\right).\]
Using the above equations, we also rewrite the radius of $B_n$ in terms of a Taylor polynomial:
\[r_n=\left|\omega_n-\zeta_n\right|-\frac{\left|\lambda^{n+1}\right|}{1-\left|\lambda\right|}=\frac{2}{\left|\lambda^{\ell+1}\right|}\left|f_{\ell+1+n}(\lambda)\right|-\frac{\left|\lambda^{n+1}\right|}{1-\left|\lambda\right|}.\]
We are now practically done. For each $0\leq n\leq p-1$:
\begin{enumerate}[{\bf (i)}]
    \item The disk $B_n$ exists if and only if $r_n>0$, namely
    \[ \frac{2}{\left|\lambda^{\ell+1}\right|}\left|f_{\ell+1+n}(\lambda)\right|-\frac{\left|\lambda^{n+1}\right|}{1-\left|\lambda\right|}>0\iff \left|f_{\ell+1+n}(\lambda)\right|>\frac{1}{2}\frac{\left|\lambda^{\ell+1+n+1}\right|}{1-\left|\lambda\right|},\]
    which is true by assumption.
    \item The disks $B_n,B_{n+1}$ intersect if and only if $r_n+r_{n+1}>\left|\omega_n-\omega_{n+1}\right|=\left|\lambda^{n+1}\right|$, that is
    \begin{eqnarray*} \frac{2}{\left|\lambda^{\ell+1}\right|}\left(\left|f_{\ell+1+n}(\lambda)\right|+\left|f_{\ell+1+n+1}(\lambda)\right|\right)-\frac{\left|\lambda^{n+1}\right|}{1-\left|\lambda\right|}-\frac{\left|\lambda^{n+2}\right|}{1-\left|\lambda\right|}&>&\left|\lambda^{n+1}\right|\\
    \iff \left|f_{\ell+1+n}(\lambda)\right|+\left|f_{\ell+1+n+1}(\lambda)\right|&>&\frac{\left|\lambda^{\ell+1+n+1}\right|}{1-\left|\lambda\right|}
    \end{eqnarray*}
    which is true by assumption.
    \item The disk $B_n$ is tangent to the nodal disk $\widetilde{\mathsf{D}}^{\mathtt{\bm b}|n}$, where $\mathtt{\bm b}$ is the itinerary of $\zeta$, by construction. It is disjoint from the rest of the instar $\widetilde{\mathsf{I}}^n$ if and only if it is disjoint from all other nodal disks; i.e., for every node $\nu_{\mathtt{\bm w}|n}\in\widetilde{\mathsf{I}}^n$ with $\mathtt{\bm w}\in\widetilde{\Sigma}^\infty$ and $\nu_{\mathtt{\bm w}|n}\neq \nu_{\mathtt{\bm b}|n}$ we have $r_n+\left|\lambda^{n+1}\right|(1-\left|\lambda\right|)^{-1}<\left|\omega_n-\nu_{\mathtt{\bm w}|n}\right|$. Again, expanding the expressions for $r_n$ and $\omega_n$, this means
    \begin{eqnarray*}
    \frac{2}{\left|\lambda^{\ell+1}\right|}\left|f_{\ell+1+n}(\lambda)\right|&<&\left|-\frac{1}{\lambda^{\ell+1}}(f_{\ell+1+n}(\lambda)+f_\ell(\lambda))-\nu_{\mathtt{\bm w}|n}\right|\\
    \iff 2\left|f_{\ell+1+n}(\lambda)\right|&<&\left|f_\ell(\lambda)+f_{\ell+1+n}(\lambda)+\lambda^{\ell+1}\nu_{\mathtt{\bm w}|n}\right|\\
    &=&\left|2f_\ell(\lambda)+\lambda^{\ell+1}P(\lambda)\right|
    \end{eqnarray*}
    where $P$ is a polynomial of degree at most $n$ with coefficients taken from the set $\{-2,-1,0,+1,+2\}$. As before, this inequality is true by assumption.
\end{enumerate}
Finally, we claim that $C\lambda^{pm}\in\frac{\lambda^{\ell+1}}{f'(\lambda)}\left(\bigcup_{n\geq0}B_n-\zeta\right)$ for every $C\in\frac{\lambda^{\ell+1}}{f'(\lambda)}\left(\bigcup_{n\geq0}B_n-\zeta\right)$ and $m\geq1$. We will show that, after a translation by $-\zeta$, the chain is forward invariant under $z\mapsto\lambda^pz$. Therefore, showing that the chain accumulates at $\zeta$.\\
Observe that $\omega_n-\zeta=-\lambda^{-\ell-1}f_{\ell+1+n}(\lambda)$ and since $(\lambda^p-1)f_{\ell}(\lambda)=\sum_{j=\ell+1}^{\ell+p}c_j\lambda^j$, then 
\begin{eqnarray*}
\lambda^p(\omega_n-\zeta)&=&-\frac{1}{\lambda^{\ell+1}}\lambda^pf_{\ell+1+n}(\lambda)\\
&=&-\frac{1}{\lambda^{\ell+1}}\lambda^p\left(f_{\ell}(\lambda)+\sum_{j=\ell+1}^{\ell+1+n}c_j\lambda^j\right)\\
&=&-\frac{1}{\lambda^{\ell+1}}\left(\lambda^pf_{\ell}(\lambda)+\lambda^p\sum_{j=\ell+1}^{\ell+1+n}c_j\lambda^j\right)\\
&=&-\frac{1}{\lambda^{\ell+1}}\left(f_\ell(\lambda)+\sum_{j=\ell+1}^{\ell+p}c_j\lambda^j+\sum_{j=\ell+1+p}^{\ell+1+n+p}c_j\lambda^j\right)\\
&=&-\frac{1}{\lambda^{\ell+1}}f_{\ell+1+n+p}(\lambda)=\omega_{n+p}-\zeta
\end{eqnarray*}
where the second to last equality is due to the fact that $c_{\ell+k}=c_{\ell+k+p}$ for every $k\geq1$. 
In particular, $B_{p+j}$ is the rescaled copy of $B_j$, and by part {\bf (ii)} it intersects non-trivially $B_{p-1+j}$ for all $j\geq0$ . Theorem~\ref{thm:asymSol}{\bf (i)} guarantees condition {\bf (iii)} holds for $B_n$ with $n\geq p$.
Hence, the claim follows from Theorem~\ref{thm:asymCKW}{\bf(ii)}.
\end{proof}
\vskip0.1in

The proof of Theorem \ref{thm:bdholem0} is analogous, except we only have to show $\bigcup_{n\geq0}B_n$ is outside $\mathsf{A}_\lambda$. Therefore, in step {\bf (iii)} we need to check that $B_n$ does not intersect the instar $\mathsf{I}^n$, rather than $\widetilde{\mathsf{I}}^n$.

\begin{remark}
The conditions of both Theorems~\ref{thm:bdholem} and \ref{thm:bdholem0} can be weakened as follows: there exists integers $2\leq m\leq p$ and $\{k_1,k_2,\cdots,, k_m \}$ where $0\leq k_1< k_2<\cdots<k_m\leq p-1$ such that for all $1\leq j\leq m$
\begin{enumerate}[{\bf (i)}]
\item $ \left|f_{\ell+1+k_j}(\lambda)\right| >\frac{1}{2}\frac{\left|\lambda^{\ell+1+k_j+1}\right|}{1-\left|\lambda\right|} $
\item $ \left|f_{\ell+1+k_j}(\lambda)\right|+\left|f_{\ell+1+k_{j+1}}(\lambda)\right|-\frac{1}{2}\left|f_{\ell+1+k_j}(\lambda)-f_{\ell+1+k_{j+1}}(\lambda)\right|>\frac{1}{2}\frac{\left|\lambda^{\ell+2+k_j}\right|+\left|\lambda^{\ell+2+k_{j+1}}\right|}{1-\left|\lambda\right|}$
\item Let $Q$ be the solution of  $f_\ell(z)+z^{\ell+1}Q(z)=f_{\ell+1+k_j}(z)$. Then for any polynomial $P$ of degree less or equal to $k_j$ with coefficients in $\{-1,0,+1\}$, with $P\neq Q$ it holds that $\left|f_{\ell+1+k_j}(\lambda)\right|<\left|f_\ell(\lambda)+\lambda^{\ell+1}P(\lambda)\right|$.
\end{enumerate}
These conditions allow the possibility of intersection between two non-consecutive chain disks.
\end{remark}

%%%%%%%%%%%%%%%%%%%%%%%%%%%%%%%%%%%%%%%%%%%%%%%%%%%%%%%%%%%%
%%%%%%%%%%%%%%%% E X A M P L E S %%%%%%%%%%%%%%%%%%%%%%%%%%%
%%%%%%%%%%%%%%%%%%%%%%%%%%%%%%%%%%%%%%%%%%%%%%%%%%%%%%%%%%%%

\section{Examples: Landmark Points}
\label{sect:ThmExamples}
In \cite{Sol}, Solomyak discussed six sample parameters that satisfy $\left|\mathcal{F}_{\lambda_j}\right|=1$, and named them {\em landmark points}. As a note of caution, we changed Solomyak's nomenclature to tie in with our exposition, so that our $\lambda_1,\lambda_2,\lambda_3,\lambda_4,\lambda_5,\lambda_6$ correspond to his $\lambda_3,\lambda_4,\lambda_5,\lambda_6,\lambda_1,\lambda_2$. In this section we will prove that $\lambda_j$ for $j=1,\ldots,5$ satisfy the conditions of Theorems~\ref{thm:bdholem} and~\ref{thm:bdholem0} and are therefore accessible.

\subsection{Period One} 
The landmark points $\lambda_j$ for $j=1,\ldots,4$ are all inside the following sector:
\[{\bm S}:=\left\{z\in\mathbb{D}~\bigg|~\frac{\sqrt{5}-1}{2}<\left|z\right|<\frac{2}{3}\text{ and }0<\arg(z)<\frac{5\pi}{32}\right\} \]
and have itineraries of various preperiods, but all with period $1$. By Proposition~\ref{prop:genlandmark} they are all accessible. Later in Section~\ref{subsect:ppm} we present a method to circumvent the lengthier computations that would be required to establish the inequalities for $\lambda_5$ (which has period $3$).
\begin{proposition}
\label{prop:genlandmark}
Let $\lambda\in{\bm S}$ and assume $\mathcal{F}_\lambda=\{f\}$ where 
$f$ is of rational type $(\ell,1)$, that is:
\[f(z)=\sum_{j=0}^\ell c_jz^j+\frac{z^{\ell+1}}{1-z}.\]
Then $\lambda$ satisfies the conditions of Theorem~\ref{thm:bdholem} (and therefore $\lambda\in\partial\mathcal{M}$ is accessible).
\end{proposition}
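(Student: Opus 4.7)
The plan is to verify directly that conditions (i), (ii), and (iii) of Theorem~\ref{thm:bdholem} hold. Since $p = 1$, only the index $n = 0$ has to be checked, and this collapses the three conditions to inequalities purely in $|\lambda|$ and $\cos(\arg \lambda)$.

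The first step is to extract closed forms for the relevant Taylor polynomials. From the rational-type-$(\ell, 1)$ expression and $f(\lambda) = 0$ one obtains $f_\ell(\lambda) = -c_{\ell+1}\lambda^{\ell+1}/(1-\lambda)$, and a one-line induction extends this to $f_{\ell+k}(\lambda) = -c_{\ell+1}\lambda^{\ell+1+k}/(1-\lambda)$ for every $k \geq 1$. The factor $|\lambda|^{\ell+1}$ then cancels from both sides of each of (i), (ii), (iii), making each inequality independent of $\ell$.

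After substitution, conditions (i) and (ii) reduce respectively to $2(1-|\lambda|) > |1-\lambda|$ and $1 - |\lambda|^2 > |1-\lambda|$. Squaring each produces a quadratic in $|\lambda|$ whose coefficients involve $\cos(\arg \lambda)$: the first is comfortably satisfied on $\bm{S}$, while the second is equivalent to $\cos(\arg \lambda) > (3|\lambda| - |\lambda|^3)/2$, which is tightest near the corner $|\lambda| = 2/3$, $\arg \lambda = 5\pi/32$ but where $\cos(5\pi/32)$ still dominates the right-hand side throughout the sector. For condition (iii) with $n = 0$ the polynomial $P$ is a constant in $\{-2,-1,0,+1,+2\} \setminus \{2c_{\ell+1}\}$, and after substitution the condition becomes $2|\lambda| < \bigl|(P - 2c_{\ell+1}) - P\lambda\bigr|$. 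Regardless of the sign of $c_{\ell+1}$, the only case that is not immediate from $|\lambda| < 1$ is the choice of $P$ making $(P - 2c_{\ell+1})$ equal to $\pm 1$; both possibilities reduce to $2|\lambda| < |1 + \lambda|$, and after squaring to $3|\lambda|^2 - 2|\lambda|\cos(\arg \lambda) - 1 < 0$. The positive root of this quadratic exceeds $2/3$ for every $\arg \lambda \in (0, 5\pi/32)$, so the upper bound in $\bm{S}$ suffices.

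The main obstacle, such as it is, is condition (ii): it genuinely needs the combination $|\lambda| < 2/3$ together with $\arg \lambda < 5\pi/32$, and is the only inequality that becomes tight at the corner of the sector. Everything else is a short estimate once the closed forms for the Taylor polynomials have eliminated $\ell$ and the $P$-count in (iii) has been whittled down by the observation that only the value giving $|1+\lambda|$ on the right-hand side is nontrivial.
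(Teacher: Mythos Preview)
Your argument is correct and follows essentially the same route as the paper: compute the closed forms $f_{\ell+k}(\lambda)=-\lambda^{\ell+1+k}/(1-\lambda)$, cancel the common factor $|\lambda|^{\ell+1}$, and reduce conditions (i)--(iii) to elementary inequalities in $|\lambda|$ and $\arg\lambda$ that are then checked on the sector~$\bm S$. The only organizational difference is that the paper records the five resulting inequalities as a separate lemma (parts (a)--(e)), verifying each by a direct law-of-cosines estimate, whereas you dispose of three of the four cases in (iii) with the observation that they already follow from $|\lambda|<1$ (via $|{-3}+\lambda|\ge 3-|\lambda|$, $|{-2}|=2$, and $|\lambda|<|2-\lambda|\Leftrightarrow \mathrm{Re}\,\lambda<1$) and handle the remaining nontrivial inequalities by squaring and locating the relevant quadratic roots; this is slightly more economical but not a different idea.
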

The proof will use the following properties of ${\bm S}$:
\begin{lemma}
\label{lem:usefulInequal}
For all $\lambda\in{\bm S}$ the following holds: 
\begin{enumerate}[{\bf (a.)}]
    \item $1-\left|\lambda\right|>\frac{1}{2}\left|1-\lambda\right|$;
    \item $1-\left|\lambda\right|^2>\left|1-\lambda\right|$;
    \item $\left|\lambda\right|<\left|2-\lambda\right|$;
    \item $2\left|\lambda\right|<\left|3-\lambda\right|$;
    \item $2\left|\lambda\right|<\left|1+\lambda\right|$.
\end{enumerate}
\end{lemma}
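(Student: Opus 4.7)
The plan is to write $\lambda = re^{i\theta}$, so that $|\lambda|=r$, $|1-\lambda|^2 = 1 - 2r\cos\theta + r^2$, $|1+\lambda|^2 = 1 + 2r\cos\theta + r^2$, and $|k-\lambda|^2 = k^2 - 2kr\cos\theta + r^2$ for $k=2,3$. Since all quantities being compared are positive, each of the five inequalities can be squared and reduced to a polynomial inequality in $r$ and $\cos\theta$. Throughout I would use the sector bounds $r < 2/3$ and $0 < \theta < 5\pi/32$; the critical observation is that $5\pi/32 < \pi/6$, so $\cos\theta > \cos(\pi/6) = \sqrt{3}/2$ on ${\bm S}$.

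For the easy cases (c), (d), and (e), squaring yields, respectively, $r\cos\theta < 1$, $r^2 + 2r\cos\theta < 3$, and $3r^2 - 2r\cos\theta < 1$. Each is verified by inserting $r < 2/3$ as an upper bound and, where relevant, $\cos\theta \leq 1$; all three follow with generous slack.

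The real work is in (a) and (b), where the upper bound on $\arg\lambda$ is genuinely used. For (a), squaring reduces the inequality to $P(r) := 3r^2 - 2(4-\cos\theta)r + 3 > 0$. The roots of $P$ are positive with product $1$, so if one root were to lie in $(0, 2/3]$ the other would have to be at least $3/2$ and $P(2/3)$ would be non-positive. A direct computation gives $P(2/3) = -1 + \tfrac{4}{3}\cos\theta$, which is positive whenever $\cos\theta > 3/4$; since $\sqrt{3}/2 > 3/4$, both roots of $P$ exceed $2/3$, and hence $P > 0$ on $[0, 2/3]$. For (b), squaring and dividing by $r > 0$ reduces the claim to $2\cos\theta > r(3 - r^2)$. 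The right-hand side is increasing on $(0, 1)$, so on ${\bm S}$ it is bounded by $(2/3)(3 - 4/9) = 46/27$, and the inequality reduces to $\cos\theta > 23/27$, which is again implied by $\cos\theta > \sqrt{3}/2$.

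The main obstacle I anticipate is the numerical tightness of (b): the bound $\sqrt{3}/2 \approx 0.866$ only narrowly exceeds $23/27 \approx 0.852$. This motivates choosing the clean intermediate estimate $\cos\theta > \sqrt{3}/2$ rather than working with $\cos(5\pi/32)$ directly; no finer estimate is required anywhere in the lemma, and interestingly the lower bound $|\lambda| > (\sqrt{5}-1)/2$ in the definition of ${\bm S}$ plays no role in any of the five inequalities, which depend only on the upper bounds on $|\lambda|$ and $\arg\lambda$.
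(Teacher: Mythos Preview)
Your argument is correct and reaches the same conclusions as the paper, but along a somewhat different route. The paper first bounds $|1-\lambda|$ from above by $5/9$ (law of cosines at a corner of ${\bm S}$) and then reads off (a) and (b) by comparing against $1-|\lambda|>1/3$ and $1-|\lambda|^2>5/9$; for (c)--(e) it bounds $|2-\lambda|$, $|3-\lambda|$, $|1+\lambda|$ from below at the relevant sector corners and compares each to $4/3>2|\lambda|$. Your strategy of squaring each inequality and reducing to a polynomial condition in $r$ and $\cos\theta$ is more uniform, and it pays off in one respect: your observation that the lower bound $|\lambda|>(\sqrt{5}-1)/2$ is nowhere needed is correct, whereas the paper's treatment of (e) explicitly uses that lower bound to minimise $|1+\lambda|$.

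One small slip to fix: for (e) your reduction is $3r^2-2r\cos\theta<1$, and this does \emph{not} follow from $r<2/3$ together with $\cos\theta\le 1$ alone. At $r=2/3$ the left side equals $(4/3)(1-\cos\theta)$, so you need $\cos\theta>1/4$; thus (e), like (a) and (b), genuinely uses the angular restriction. Your bound $\cos\theta>\sqrt{3}/2$ handles this with room to spare, so this is a wording issue rather than a gap.
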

\begin{proof}
From the law of cosines we obtain
    \begin{align*}
    \frac{5}{9}&>\sqrt{1^2+\left(\frac{\sqrt{5}-1}{2}\right)^2-2\left(\frac{\sqrt{5}-1}{2}\right)\cos\left(\frac{5\pi}{32}\right)}\\
    &>\left|1-\lambda\right|>\sqrt{1^2+\left(\tfrac{2}{3}\right)^2-2\left(\tfrac{2}{3}\right)\cos(0)}=\frac{1}{3}.
    \end{align*}
    It follows that 
    \[1-\left|\lambda\right|>1-\tfrac{2}{3}>\tfrac{1}{2}\cdot \tfrac{5}{9}>\tfrac{1}{2}\left|1-\lambda\right|\]
    and
    \[1-\left|\lambda\right|^2>1-\tfrac{4}{9}>\left|1-\lambda\right|,\]
    which gives {\bf(a.)} and {\bf (b.)}. 
    
    Similarly,
    \begin{align*}
       \left|2-\lambda\right|&>\sqrt{2^2+\left(\frac{2}{3}\right)^2-2\left(\frac{2}{3}\right)2\cos(0)}=\frac{4}{3}>2\left|\lambda\right|>\left|\lambda\right|\\
       \left|3-\lambda\right|&>\sqrt{3^2+\left(\frac{2}{3}\right)^2-2\left(\frac{2}{3}\right)3\cos(0)}=\frac{7}{3}>2\left|\lambda\right|,
    \end{align*}
    which gives {\bf (c.)} and {\bf (d.)}.
    
    Finally, 
    \[
    \left|1+\lambda\right|>\sqrt{1^2+\left(\frac{\sqrt{5}-1}{2}\right)^2+2\left(\frac{\sqrt{5}-1}{2}\right)\cos\left(\frac{5\pi}{32}\right)}>1.572>\frac{4}{3}>2\left|\lambda\right|,
    \]
    giving {\bf (e.)} and concluding the proof of the lemma.
\end{proof}
Armed with these inequalities, we proceed to prove the Proposition:
\begin{proof}[Proof of Proposition~\ref{prop:genlandmark}]
Since $\lambda$ is the root of the power series $f$, we can write Taylor polynomials as follows
\begin{align*}
    f_\ell(\lambda)&=\sum_{j=0}^\ell c_j\lambda^j=-\frac{\lambda^{\ell+1}}{1-\lambda},\\
    f_{\ell+1}(\lambda)&=\sum_{j=0}^\ell c_j\lambda^j+\lambda^{\ell+1}=-\frac{\lambda^{\ell+2}}{1-\lambda},\\
    f_{\ell+2}(\lambda)&=\sum_{j=0}^\ell c_j\lambda^j+\lambda^{\ell+1}+\lambda^{\ell+2}=-\frac{\lambda^{\ell+3}}{1-\lambda}.
\end{align*}
Condition {\bf(i)} in Theorem~\ref{thm:bdholem} is satisfied since
\[
\left|f_{\ell+1}(\lambda)\right|>\frac{1}{2}\,\frac{\left|\lambda^{\ell+2}\right|}{1-\left|\lambda\right|}\iff \left|\frac{1}{1-\lambda}\right|>\frac{1}{2}\,\frac{1}{1-\left|\lambda\right|}\iff1-\left|\lambda\right|>\frac{1}{2}\left|1-\lambda\right|
\]
holds by Lemma~\ref{lem:usefulInequal} part {\bf (a.)}.

\noindent Condition {\bf(ii)} in Theorem~\ref{thm:bdholem} is satisfied since
\begin{align*}
\left|f_{\ell+1}(\lambda)\right|+\left|f_{\ell+2}(\lambda)\right|>\frac{\left|\lambda^{\ell+2}\right|}{1-\left|\lambda\right|}&\iff \left|\frac{1}{1-\lambda}\right|+\left|\frac{\lambda}{1-\lambda}\right|>\frac{1}{1-\left|\lambda\right|}\\&\iff1-\left|\lambda\right|^2>\left|1-\lambda\right|
\end{align*}
holds by Lemma~\ref{lem:usefulInequal} part {\bf (b.)}.

\noindent Condition {\bf(iii)} in Theorem~\ref{thm:bdholem} has four cases since the polynomial $P$ can only be either $-2,-1,0,$ or $1$. The case $P(z)=-2$ is satisfied because
\begin{align*}
\left|2f_{\ell+1}(\lambda)\right|<\left|2f_{\ell}(\lambda)+\lambda^{\ell+1}(-2)\right|&\iff \left|\frac{-2\lambda^{\ell+2}}{1-\lambda}\right|<\left|\frac{-4\lambda^{\ell+1}+2\lambda^{\ell+2}}{1-\lambda}\right|\\
&\iff \left|\lambda\right|<\left|2-\lambda\right|
\end{align*}
holds by Lemma~\ref{lem:usefulInequal} part {\bf (c.)}.\\
The case $P(z)=-1$ is satisfied because
\begin{align*}
\left|2f_{\ell+1}(\lambda)\right|<\left|2f_{\ell}(\lambda)+\lambda^{\ell+1}(-1)\right|&\iff \left|\frac{-2\lambda^{\ell+2}}{1-\lambda}\right|<\left|\frac{-3\lambda^{\ell+1}+\lambda^{\ell+2}}{1-\lambda}\right|\\
&\iff \left|2\lambda\right|<\left|3-\lambda\right|
\end{align*}
holds by Lemma~\ref{lem:usefulInequal} part {\bf (d.)}.\\
The case $P(z)=0$ is trivial since
\begin{align*}
\left|2f_{\ell+1}(\lambda)\right|<\left|2f_{\ell}(\lambda)+\lambda^{\ell+1}(0)\right|&\iff \left|\frac{-2\lambda^{\ell+2}}{1-\lambda}\right|<\left|\frac{-2\lambda^{\ell+1}}{1-\lambda}\right|
\iff \left|\lambda\right|<1.
\end{align*}
The case $P(z)=1$ is satisfied because
\begin{align*}
\left|2f_{\ell+1}(\lambda)\right|<\left|2f_{\ell}(\lambda)+\lambda^{\ell+1}(1)\right|&\iff \left|\frac{-2\lambda^{\ell+2}}{1-\lambda}\right|<\left|\frac{-\lambda^{\ell+1}-\lambda^{\ell+2}}{1-\lambda}\right|\\
&\iff \left|2\lambda\right|<\left|1+\lambda\right|
\end{align*}
holds by Lemma~\ref{lem:usefulInequal} part {\bf (e.)}.
\end{proof}
\begin{figure}[!htb]
    \centering
    \includegraphics[scale=0.47]{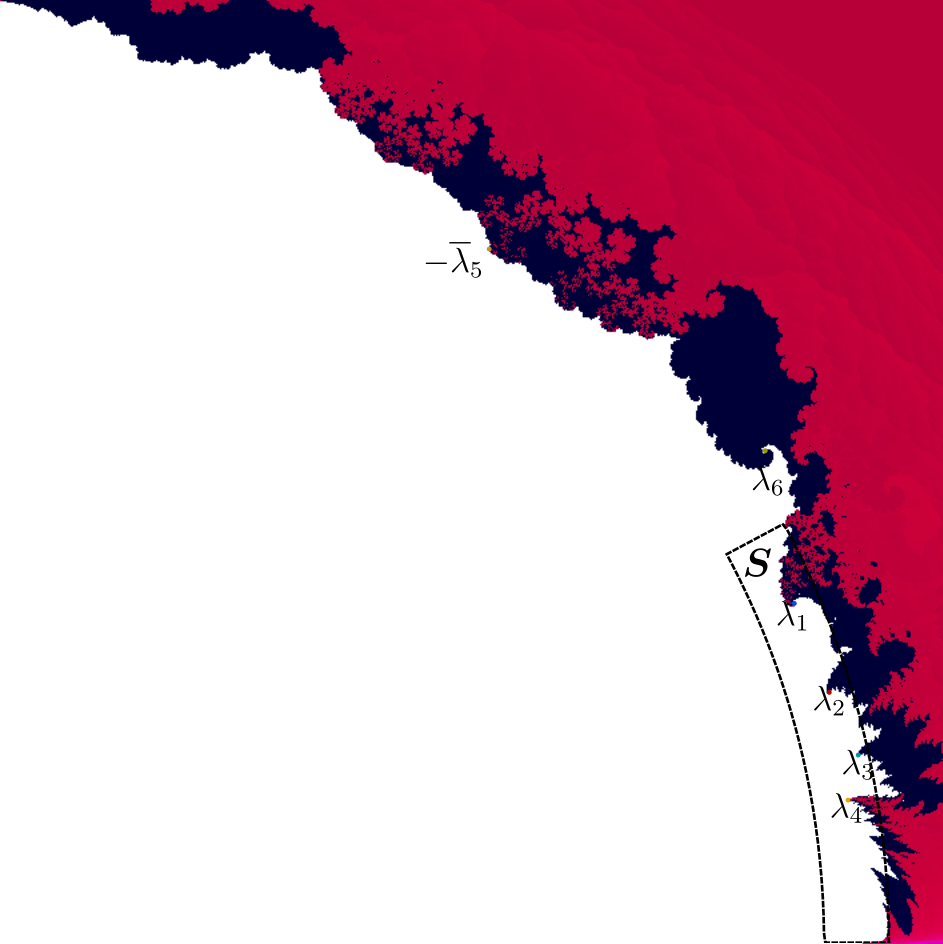}
    \caption{The window $[0,0.708]^2$ of parameter space, showing the sector ${\bm S}$ intersecting $\mathcal{M}$ and $\mathcal{M}_0$. The landmark points $\lambda_j$ for $j=1,2,3,4,5,6$ are marked (we display $-\overline{\lambda_5}$ instead of $\lambda_5$ simply to make a picture with better resolution; see also the footnote to Example 5).}
    \label{fig:Sregion}
\end{figure}

%%%%%%%%%%%%%%%%%%%%%%%%%%%%%%%%%%%%%%%%%%%%%%%%%%%%%%%%%%%%
%%%%%%%%%%%%%%%%%%%% + - - (+) %%%%%%%%%%%%%%%%%%%%%%%%%%%%%
%%%%%%%%%%%%%%%%%%%%%%%%%%%%%%%%%%%%%%%%%%%%%%%%%%%%%%%%%%%%
% \subsection{\texorpdfstring{$\boxed{\bm{+--(+)^\infty}}$}{+--(+)}}

In the following examples recall that the uniqueness of the power series was proved by Solomyak in \cite{Sol}.
\begin{example}
Let $\mathtt{\bm c}=c_0c_1\cdots=+--(+)^\infty\in\Sigma^\infty$ and let $f$ be the corresponding power series of rational type $(2,1)$, that is $f(z):=\sum_{k=0}^\infty c_kz^k=\dfrac{1-2z+2z^3}{1-z}$. The associated parameter in $\mathcal{M}\cap{\bm S}$ is $\lambda_1\approx 0.5957439 + 0.2544259{\rm i}$.
It follows from Proposition~\ref{prop:genlandmark} that the parameter $\lambda_1$, is an accessible point of $\partial\mathcal{M}$ (see Figure~\ref{fig:laChain}). In fact, by Corollary~\ref{cor:sharedBd}, we can conclude that $\lambda_1\in\partial\mathcal{M}\cap\partial\mathcal{M}_0$.
\end{example}

%%%%%%%%%%%%%%%%%%%%%%%%%%%%%%%%%%%%%%%%%%%%%%%%%%%%%%%%%%%%
%%%%%%%%%%%%%%%%%% + - - 0 (+) %%%%%%%%%%%%%%%%%%%%%%%%%%%%%
%%%%%%%%%%%%%%%%%%%%%%%%%%%%%%%%%%%%%%%%%%%%%%%%%%%%%%%%%%%%
% \subsection{\texorpdfstring{$\boxed{\bm{+--\mathtt{O}(+)^\infty}}$}{+--0(+)}}
\begin{example}
Let $\mathtt{\bm c}=c_0c_1\cdots=+--\mathtt{O}(+)^\infty\in\widetilde{\Sigma}^\infty$ and let $f$ be the corresponding power series of rational type $(3,1)$, that is $f(z):=\sum_{k=0}^\infty c_kz^k=\dfrac{1-2z+z^3+z^4}{1-z}$. The associated parameter in $\mathcal{M}\cap{\bm S}$ is $\lambda_2\approx 0.6219644+0.1877304{\rm i}$.
It follows from Proposition~\ref{prop:genlandmark} that the parameter $\lambda_2$, is an accessible point of $\partial\mathcal{M}$ (see Figure~\ref{fig:laChain}). 

\end{example}

%%%%%%%%%%%%%%%%%%%%%%%%%%%%%%%%%%%%%%%%%%%%%%%%%%%%%%%%%%%%
%%%%%%%%%%%%%%%% + - - 0 0 (+) %%%%%%%%%%%%%%%%%%%%%%%%%%%%%
%%%%%%%%%%%%%%%%%%%%%%%%%%%%%%%%%%%%%%%%%%%%%%%%%%%%%%%%%%%%
% \subsection{\texorpdfstring{$\boxed{\bm{+--\mathtt{O}\mathtt{O}(+)^\infty}}$}{+--00(+)}}
\begin{example}
Let $\mathtt{\bm c}=c_0c_1\cdots=+--\mathtt{O}\mathtt{O}(+)^\infty\in\widetilde{\Sigma}^\infty$ and let $f$ be the corresponding power series of rational type $(4,1)$, that is $f(z):=\sum_{k=0}^\infty c_kz^k=\dfrac{1-2z+z^3+z^5}{1-z}$. The associated parameter in $\mathcal{M}\cap{\bm S}$ is $\lambda_3\approx 0.643703+0.140749{\rm i}$. 
It follows from Proposition~\ref{prop:genlandmark} that the parameter $\lambda_3$, is an accessible point of $\partial\mathcal{M}$ (see Figure~\ref{fig:laChain}).

\end{example}

%%%%%%%%%%%%%%%%%%%%%%%%%%%%%%%%%%%%%%%%%%%%%%%%%%%%%%%%%%%%
%%%%%%%%%%%%%%%%%% + - - - (+) %%%%%%%%%%%%%%%%%%%%%%%%%%%%%
%%%%%%%%%%%%%%%%%%%%%%%%%%%%%%%%%%%%%%%%%%%%%%%%%%%%%%%%%%%%
% \subsection{\texorpdfstring{$\boxed{\bm{+---(+)^\infty}}$}{+---(+)}}
\begin{example}
Let $\mathtt{\bm c}=c_0c_1\cdots=+---(+)^\infty\in\Sigma^\infty$ and let $f$ be the corresponding power series of rational type $(3,1)$, that is $f(z):=\sum_{k=0}^\infty c_kz^k=\dfrac{1-2z+2z^4}{1-z}$. The associated parameter in $\mathcal{M}\cap{\bm S}$ is $\lambda_4\approx 0.63601 + 0.106924{\rm i}$. 
It follows from Proposition~\ref{prop:genlandmark} that the parameter $\lambda_4$, is an accessible point of $\partial\mathcal{M}$ (see Figure~\ref{fig:laChain}). In fact, by Corollary~\ref{cor:sharedBd}, we can conclude that $\lambda_4\in\partial\mathcal{M}\cap\partial\mathcal{M}_0$.
\end{example}

\begin{note}
The overlap set has different cardinalities in each of these examples. In particular, in Example 1 and 4 (where the overlap is a singleton) the parameters belong to both $\mathcal{M}$ and $\mathcal{M}_0$, whereas in Example 2 and 3 (where the overlap consists of $2$ or $4$ points) the parameters belong to $\mathcal{M}$, but not $\mathcal{M}_0$.
\end{note}

\begin{figure}[!htb]
    \centering
    \includegraphics[scale=0.23, trim= 0 110 0 200,clip]{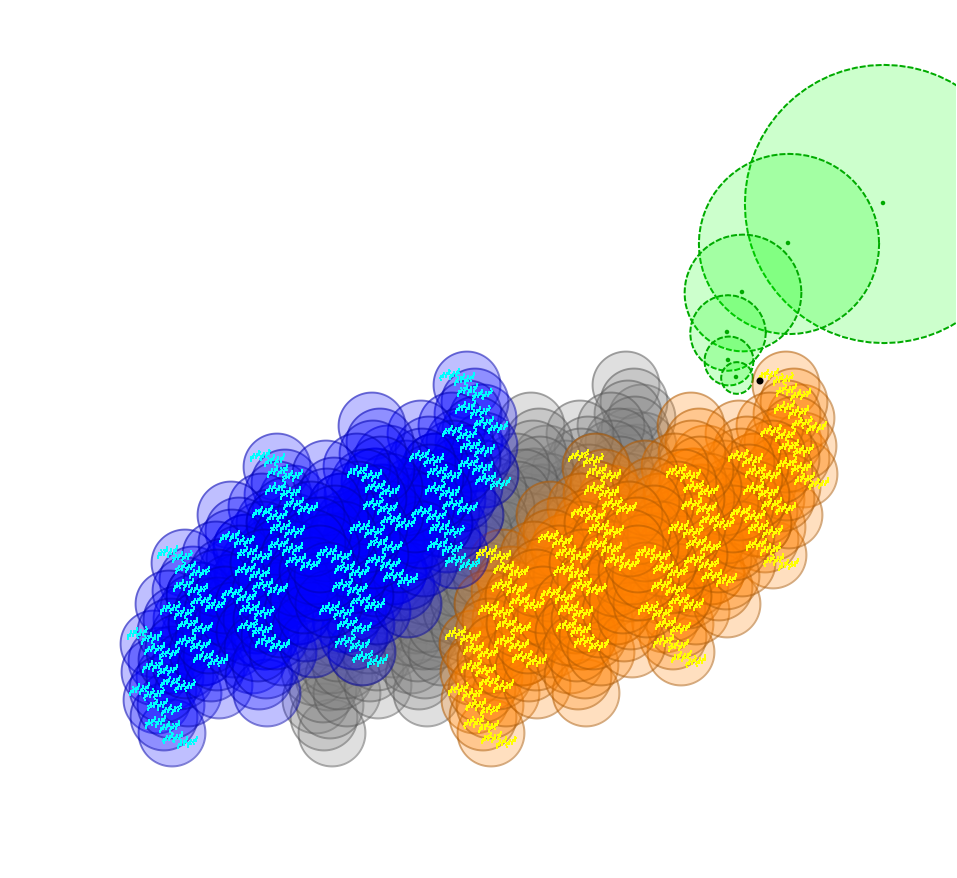}%
    \includegraphics[scale=0.23, trim= 0 135 0 190,clip]{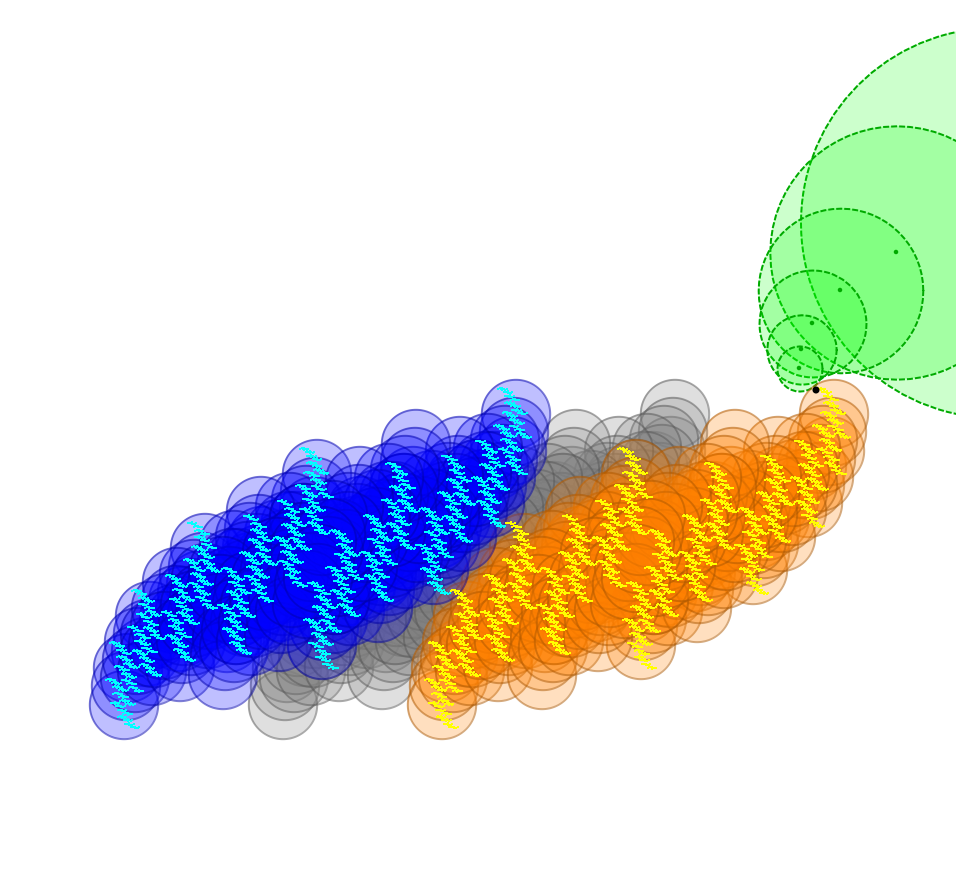}\\
    \includegraphics[scale=0.23, trim= 0 135 0 190,clip]{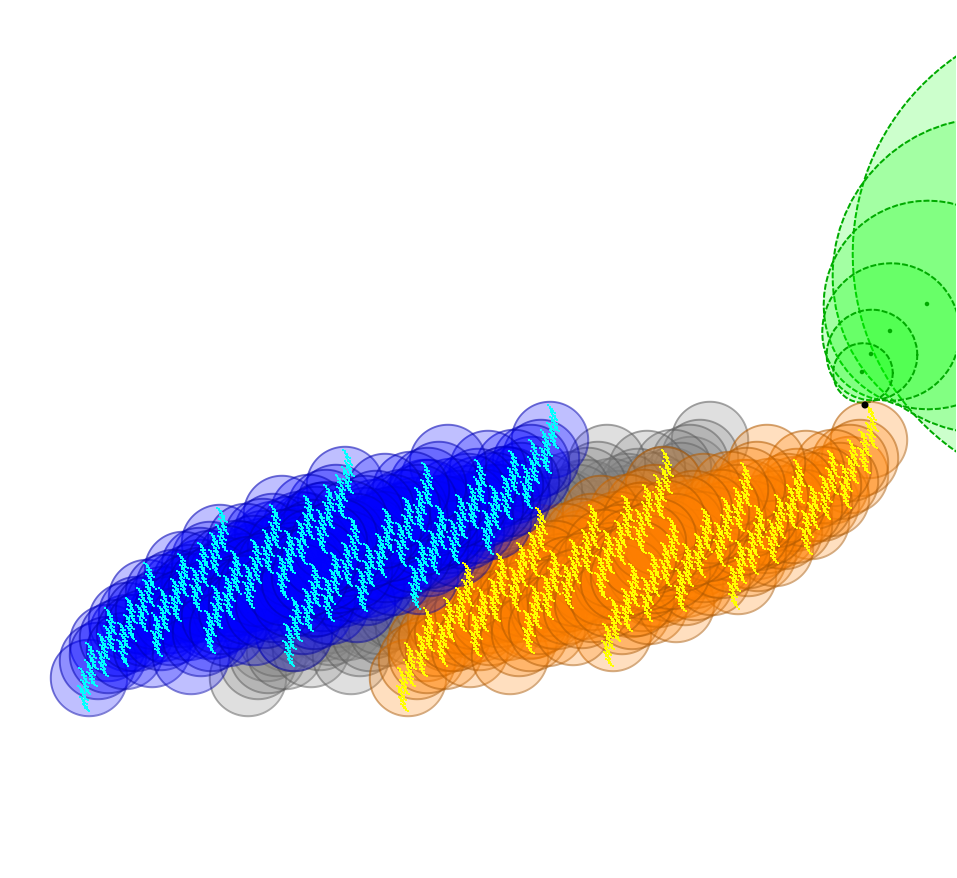}%
    \includegraphics[scale=0.23, trim= 0 135 0 190,clip]{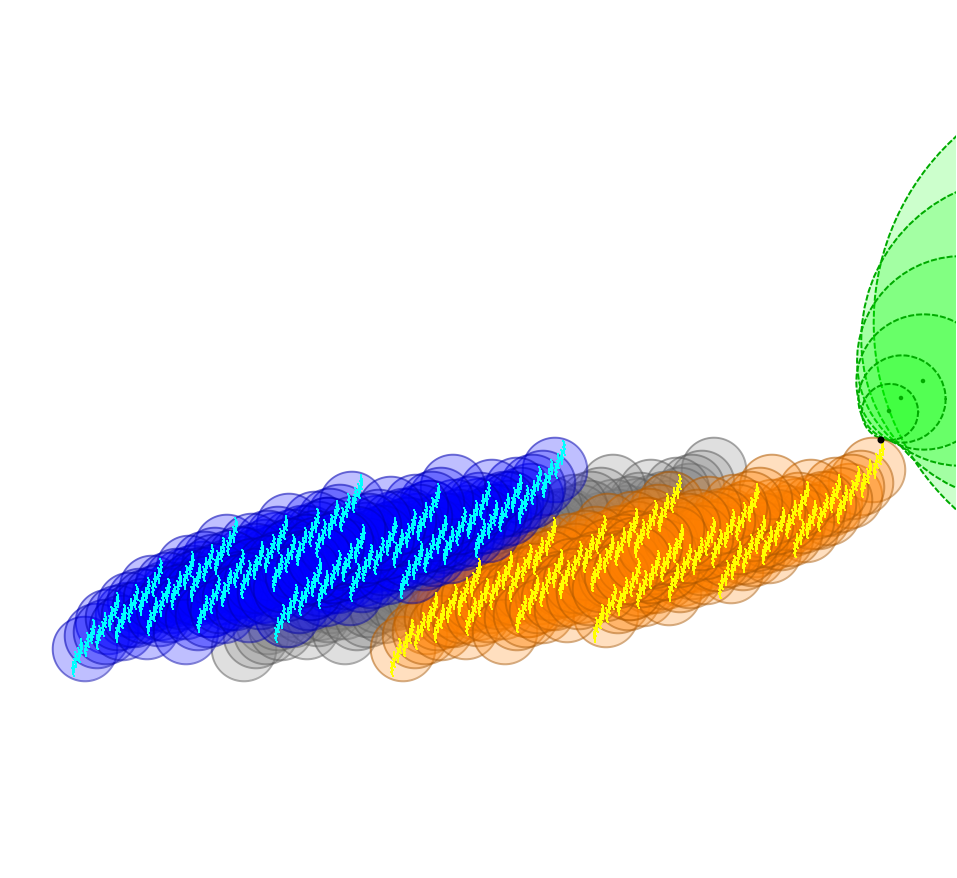}
    \caption{ The attractors $\mathsf{A}_{\lambda_j}$ for $j=1,2,3,4$ (in the order $\begin{smallmatrix}1&2\\3&4\end{smallmatrix}$) inside their respective instars $\widetilde{\mathsf{I}}^{5}$. The green disks in the top-right corners of each plot are part of the chain constructed in Theorem~\ref{thm:bdholem}.}
    \label{fig:laChain}
\end{figure}

%%%%%%%%%%%%%%%%%%%%%%%%%%%%%%%%%%%%%%%%%%%%%%%%%%%%%%%%%%%%
%%%%%%%%%%%%%%%%%%%% + (+ + -) %%%%%%%%%%%%%%%%%%%%%%%%%%%%%
%%%%%%%%%%%%%%%%%%%%%%%%%%%%%%%%%%%%%%%%%%%%%%%%%%%%%%%%%%%%
% \subsection{\texorpdfstring{$\boxed{\bm{+(++-)^\infty}}$}{+(++-)}}
\subsection{Higher period}
\label{subsect:ppm}
Here we consider the landmark point $\lambda_5$ whose associated itinerary has period $3$. Observe that in this case the number of inequalities to be checked before applying Theorem~\ref{thm:bdholem} becomes quite large. To overcome this difficulty we rely on the argument and modulus of $\lambda_5$, and the geometric structure of the instar at each level. We will construct each of the disks of the connected chain that lies in the complement of $\widetilde{\mathsf{A}}_{\lambda_5}$.

\begin{example} Let $\mathtt{\bm c}=c_0c_1\cdots=+(++-)^\infty\in\Sigma^\infty$ and consider the associated power series $f(z):=\sum_{k=0}^\infty c_kz^k=\dfrac{1+z+z^2-2z^3}{1-z^3}$ of rational type $(0,3)$. Solomyak proved in \cite{Sol} that\footnote{Solomyak worked with the word $+(-+++--)^\infty$ which gives the power series $\frac{1-z+z^2+2z^3}{1+z^3}$; in other words, he looked at $f(-z)$. The symmetry of $\mathcal{M}$ allows us to consider $f(z)$ instead.} $f$ is the unique power series for which $\lambda_5\approx -0.366+0.520i$ is a root. Using the notation of Theorem~\ref{thm:bdholem} we have $\ell=0$, $p=3$, $\xi=0$, $\xi_n=f_{n}(\lambda_5)$, and $\zeta=-\frac{1}{\lambda_5}\in\mathsf{A}_{\lambda_5}\subset\widetilde{\mathsf{A}}_{\lambda_5}$ with itinerary $\mathtt{\bm b}=(++-)^\infty$. 

We begin by proving that the hypothesis {\bf (i)} of Theorem~\ref{thm:bdholem} is satisfied:
\begin{lemma}
\label{lem:existOneThird}
    Let $\lambda_5$ and $f(z)$ be as above. Then for every $0\leq n\leq p-1$ 
    \[ 2\left|f_{n}(\lambda_5)\right|>\frac{\left|\lambda_5^{n+1}\right|}{1-\left|\lambda_5\right|}.\]
\end{lemma}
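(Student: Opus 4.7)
My plan is to exploit the algebraic equation satisfied by $\lambda_5$ to reduce each of the three inequalities to a simple condition either on $r:=|\lambda_5|$ alone, or on the real root of the minimal polynomial of $\lambda_5$.

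The first step is to observe that, since $|\lambda_5|<1$, the relation $f(\lambda_5)=0$ is equivalent to $\lambda_5$ being a root of the cubic $p(z):=2z^3-z^2-z-1$. A brief sign analysis shows $p$ has a unique real root $\alpha$, necessarily positive, so the remaining roots form the conjugate pair $\lambda_5,\overline{\lambda_5}$. Vieta's formulas then furnish the two identities
\[
\alpha\,|\lambda_5|^2=\tfrac12,\qquad 2\,\mathrm{Re}(\lambda_5)=\tfrac12-\alpha,
\]
which drive the remainder of the argument. To bound $r$ from above, I would check arithmetically that $p(9/8)=-139/256<0$; combined with $p(z)\to+\infty$ as $z\to+\infty$, this forces $\alpha>9/8$, hence $r=(2\alpha)^{-1/2}<2/3$.

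The cases $n=0$ and $n=2$ now fall out almost for free. For $n=0$ the inequality $2>r/(1-r)$ is literally equivalent to $r<2/3$. For $n=2$, the identity $p(\lambda_5)=0$ rewrites $f_2(\lambda_5)=1+\lambda_5+\lambda_5^2$ as $2\lambda_5^3$, so $|f_2(\lambda_5)|=2r^3$, and the desired inequality collapses to $r<3/4$, which holds a fortiori from $r<2/3$.

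The case $n=1$ is where the main work lies. Using $f(\lambda_5)=0$ I would factor $f_1(\lambda_5)=1+\lambda_5=\lambda_5^2(2\lambda_5-1)$. Expanding $|2\lambda_5-1|^2=4r^2-4\,\mathrm{Re}(\lambda_5)+1$ and plugging in the two Vieta identities telescopes everything to the clean expression
\[
|2\lambda_5-1|^2=2\alpha+\tfrac{2}{\alpha}.
\]
The inequality to prove then becomes $4(1-r)^2\bigl(2\alpha+2/\alpha\bigr)>1$. From $r<2/3$ one gets $(1-r)^2>1/9$, while $2\alpha+2/\alpha>9/4$ holds for every $\alpha>0$ because the quadratic $8\alpha^2-9\alpha+8$ has discriminant $81-256<0$ together with positive leading coefficient. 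Multiplying these two strict lower bounds yields $4(1-r)^2(2\alpha+2/\alpha)>4\cdot\tfrac19\cdot\tfrac94=1$, as required.
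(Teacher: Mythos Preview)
Your argument is correct. The cases $n=0$ and $n=2$ coincide with the paper's proof (both reduce to $r<2/3$, resp.\ $r<3/4$, and both use $f_2(\lambda_5)=2\lambda_5^3$). The difference lies in the case $n=1$ and in how the bound $r<2/3$ is obtained.

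The paper takes a geometric route: it asserts the numerical windows $\tfrac{\sqrt5-1}{2}<|\lambda_5|<\tfrac23$ and $\tfrac{2\pi}{3}<\arg(\lambda_5)<\tfrac{23\pi}{32}$, then bounds $|1+\lambda_5|$ from below via the law of cosines, getting $2|1+\lambda_5|>\sqrt2>2|\lambda_5|>\tfrac{|\lambda_5|^2}{1-|\lambda_5|}$. Your route is purely algebraic: you derive $r<2/3$ from the single rational evaluation $p(9/8)<0$ together with Vieta, and you rewrite $1+\lambda_5=\lambda_5^2(2\lambda_5-1)$ so that $|1+\lambda_5|$ factors through $|2\lambda_5-1|^2=2\alpha+2/\alpha$; no estimate on $\arg(\lambda_5)$ is ever needed. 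This makes your proof of the lemma more self-contained. (Incidentally, your discriminant bound $2\alpha+2/\alpha>9/4$ is considerably weaker than the AM--GM bound $2\alpha+2/\alpha\ge 4$, which would give the conclusion with room to spare; either works, since both inequalities you multiply are strict.)

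Two small presentational points. First, the phrase ``brief sign analysis shows $p$ has a unique real root'' deserves a word of justification: the cleanest is simply that $\lambda_5$ is known to be non-real, so $\overline{\lambda_5}$ is also a root and the third root is forced to be real and unique; positivity then follows from $p(0)=-1<0$. Second, the paper's argument bounds on $\lambda_5$ are reused heavily in the subsequent lemma on the chain $\bigcup B_n$, so if you continue with the next lemma you will likely need to establish something like them anyway.
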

\begin{proof}First notice that 
    \begin{enumerate}[(i)]
        \item $\frac{\sqrt{5}-1}{2}<\left|\lambda_5\right|<\frac{2}{3}$ and 
        \item $\frac{2}{3}\pi<\arg(\lambda_5)<\frac{23}{32}\pi$.
    \end{enumerate}
    Then, from (i) we get \[2\left|f_{0}(\lambda_5)\right|=2\left|1\right|>\frac{\left|\lambda_5\right|}{1-\left|\lambda_5\right|}.\]

    From (ii) and the Law of cosines we get
    \begin{eqnarray*}
    2\left|f_{1}(\lambda_5)\right|=2\left|1+\lambda_5\right|&>&2\sqrt{1+\left|\lambda_5\right|^2-2\left|\lambda_5\right|\cos\left(\frac{9\pi}{32}\right)}\\
    &>&2\sqrt{1+\frac{(\sqrt{5}-1)^2}{4}-(\sqrt{5}-1)\cos\left(\frac{9\pi}{32}\right)}\\
    &>&2~\frac{1}{\sqrt{2}}>2\left|\lambda_5\right|>\frac{\left|\lambda_5^2\right|}{1-\left|\lambda_5\right|}.
    \end{eqnarray*}
    Since $f(\lambda_5)=0$ then $f_{2}(\lambda_5)=1+\lambda_5+\lambda_5^2=2\lambda_5^3$ and consequently 
    \[2\left|f_{2}(\lambda_5)\right|=2\left|2\lambda_5^3\right|>\frac{\left|\lambda_5^3\right|}{1-\left|\lambda_5\right|}\]
    because $4>(1-\left|\lambda_5\right|)^{-1}$.
\end{proof}

Lemma~\ref{lem:existOneThird} guarantees that the disks $B_n$ in the chain exists for every $n\geq0$. We can show now that $B_n\cap B_{n+1}\neq\emptyset$ and that $B_n\subset \mathbb{C}\setminus\widetilde{\mathsf{I}}^n$, namely the remaining two hypothesis of Theorem~\ref{thm:bdholem}. Recall that each disk $B_n$ is centered at $\omega_n$ and has radius $r_n$:
\[
\omega_n=-\frac{1}{\lambda_5}(f_{n+1}(\lambda_5)+f_{0}(\lambda_5)),\qquad
r_n=\frac{2}{\left|\lambda_5\right|}\left|f_{n+1}(\lambda_5)\right|-\frac{\left|\lambda_5^{n+1}\right|}{1-\left|\lambda_5\right|}.
\]
\begin{lemma}
\label{lem:outOneThird}
For $\lambda_5\approx-0.366+0.520{\rm i}$, the set $\bigcup_{n\geq0}B_n$ is connected and lies in the complement of $\widetilde{\mathsf{A}}_{\lambda_5}$.
\end{lemma}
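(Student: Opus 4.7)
The plan is to verify conditions (ii) and (iii) of Theorem~\ref{thm:bdholem} at the initial levels $n = 0, 1, 2$; condition (i) is already Lemma~\ref{lem:existOneThird}. Once all three conditions hold for $n \in \{0, 1, 2\}$, the self-similarity argument closing the proof of Theorem~\ref{thm:bdholem} applies verbatim with $p = 3$: after translation by $-\zeta$ the chain is forward invariant under $z \mapsto \lambda_5^3 z$, so $B_{n+3} - \zeta = \lambda_5^3(B_n - \zeta)$, the three conditions propagate to every $n \geq 3$, and $\bigcup_{n \geq 0} B_n$ is connected, lies in the complement of $\widetilde{\mathsf{I}}^n \supset \widetilde{\mathsf{A}}_{\lambda_5}$, and accumulates at $\zeta$.

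The identity $f(\lambda_5) = 0$ gives $1 + \lambda_5 + \lambda_5^2 = 2\lambda_5^3$, hence the closed forms
\[
f_0(\lambda_5) = 1, \quad f_1(\lambda_5) = 1 + \lambda_5, \quad f_2(\lambda_5) = 2\lambda_5^3, \quad f_3(\lambda_5) = \lambda_5^3, \quad f_4(\lambda_5) = \lambda_5^3(1 + \lambda_5),
\]
and inductively $f_{n+3}(\lambda_5) = \lambda_5^3 f_n(\lambda_5)$, which is also what produces the $\lambda_5^3$-periodicity of the chain used in the propagation step. Condition (ii) then reduces to the three elementary inequalities $|f_{1+n}(\lambda_5)| + |f_{2+n}(\lambda_5)| > |\lambda_5|^{n+2}/(1-|\lambda_5|)$, each of which can be dispatched via the Law of Cosines from the bounds $(\sqrt{5} - 1)/2 < |\lambda_5| < 2/3$ and $2\pi/3 < \arg \lambda_5 < 23\pi/32$ already exploited in the proof of Lemma~\ref{lem:existOneThird}; the case $n = 2$ is the tightest but still admits a workable margin.

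The principal obstacle is condition (iii), which demands that $B_n$ be separated from every nodal disk of $\widetilde{\mathsf{I}}^n$ other than the tangent disk $\widetilde{\mathsf{D}}^{\mathtt{\bm b}|n}$, a total of $(3-1) + (9-1) + (27-1) = 36$ inequalities. Rather than enumerating them, I propose a geometric tree-pruning argument based on the ternary structure of $\widetilde{\mathsf{I}}^n$: every competing node decomposes as $\nu_{\mathtt{\bm w}|n} - \nu_{\mathtt{\bm b}|n} = \sum_{j=k}^{n} (w_j - b_j)\lambda_5^j$, where $k$ is the first index of disagreement, and the leading term $\lambda_5^k(w_k - b_k)$ dominates the tail by a factor of $1/(1 - |\lambda_5|)$. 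Closed-form evaluation of $\omega_n = -\lambda_5^{-1}(f_{1+n}(\lambda_5) + 1)$ combined with the bounds on $|\lambda_5|$ and $\arg \lambda_5$ places each $\omega_n$ in a specific narrow region relative to $\nu_{\mathtt{\bm b}|n}$, and a triangle-inequality estimate then excludes in bulk every node whose itinerary disagrees with $\mathtt{\bm b}$ at some position $k \leq n-1$. Only the two nearest competitors — those differing from $\mathtt{\bm b}$ exclusively at position $n$ — need individual inspection, and here the directional bound on $\arg \lambda_5$ enters essentially to certify that the relevant inner product is sufficiently negative. This pruning, driven by the specific geometry of $\lambda_5$, is the computational heart of the argument and the reason the combinatorial explosion is avoided.
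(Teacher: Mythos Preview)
Your overall architecture matches the paper's: reduce to $n=0,1,2$ and propagate by the $\lambda_5^3$-self-similarity. Condition (ii) is fine as you sketch it. The difficulty is entirely in (iii), and here your proposed mechanism has a concrete flaw.

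You write that in the decomposition $\nu_{\mathtt{\bm w}|n}-\nu_{\mathtt{\bm b}|n}=\sum_{j=k}^n(w_j-b_j)\lambda_5^j$ the leading term ``dominates the tail by a factor of $1/(1-|\lambda_5|)$''. This is false for $\lambda_5$: the tail can have modulus up to $2|\lambda_5|^{k+1}/(1-|\lambda_5|)$, while the leading term can be as small as $|\lambda_5|^k$, and domination would require $|\lambda_5|<1/3$, whereas $|\lambda_5|\approx 0.636$. So a pure modulus-based triangle inequality does \emph{not} prune the tree down to the two position-$n$ competitors; the bulk exclusion needs genuinely directional input, not the heuristic you state. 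One can in fact push through a positional argument (locating $\omega_n$ explicitly and bounding its distance to each of the three level-$0$ nodes, then to each of the three level-$1$ children of the surviving node, etc.), but that is a different computation from the one you describe, and at $n=2$ it is not obviously shorter than the $26$ direct checks.

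The paper sidesteps the $n=2$ combinatorics entirely by proving the containment $B_2\subset B_1$: since $|\omega_1-\omega_2|=|\lambda_5|^2$ and one checks $r_1>2r_2$, the smaller disk sits inside the larger, and $B_1\cap\widetilde{\mathsf{I}}^1=\emptyset$ together with $\widetilde{\mathsf{I}}^2\subset\widetilde{\mathsf{I}}^1$ finishes (iii) for $n=2$ in one line. At $n=1$ the paper uses a real-part argument ($0<\mathrm{Re}(\zeta_1)<\mathrm{Re}(\zeta)$ forces $\omega_1$ into a region meeting only $\widetilde{\mathsf{D}}^{+\mathtt{O}}$ and $\widetilde{\mathsf{D}}^{+-}$). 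These level-specific tricks are what make the paper's proof short; your uniform pruning idea, as stated, does not work, and a corrected version would be longer.
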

\begin{proof}
 By the self-similarity of the attractor $\mathsf{A}_{\lambda_5}$ at $\zeta$ it is enough to prove that $\cup_{0\leq n\leq2} B_n$ is connected and lies outside the instar $\widetilde{\mathsf{I}}^2$. We will first show that $B_n$ lies outside the instar $\widetilde{\mathsf{I}}^n$ and then prove that $B_n\cap B_{n+1}\neq\emptyset$ for each $n=0,1,2$.
\begin{itemize}
    \item[$n=0$:] The instar $\widetilde{\mathsf{I}}^0$ is the union of the nodal disks $\mathsf{D}^+,\mathsf{D}^\mathtt{O}$, and $\mathsf{D}^-$. We already know that $B_0$ is outside $\mathsf{D}^+$, we want to show that the distances between $\omega_0$ and the nodes $\nu_{\mathtt{\bm 0}}=0$ and $\nu_{-}=-1$ is larger than the sum of the radii of $B_0$ and the nodal disk of level $0$. In other words, we need that $r_0+\left|\lambda_5\right|(1-\left|\lambda_5\right|)^{-1}<\left|\omega_0-0\right|$ and $r_0+\left|\lambda_5\right|(1-\left|\lambda_5\right|)^{-1}<\left|\omega_0-(-1)\right|$.
    Using that $2^{-1}(\sqrt{5}-1)<\left|\lambda_5\right|<2/3$ and $\arg(\lambda_5)>2\pi/3$, the Law of cosines gives
    \[
    \left|1+\lambda_5\right|<\sqrt{1+\left|\lambda_5\right|^2-2\left|\lambda_5\right|\cos\left(\frac{\pi}{3}\right)}<\sqrt{1+\frac{4}{9}-\frac{4}{3}\cos\left(\frac{\pi}{3}\right)}<1
    \]
    so 
    \[
    \left|\omega_0-(-1)\right|=2\left|\frac{1}{\lambda_5}\right|>2\left|\frac{1+\lambda_5}{\lambda_5}\right|=r_0+\frac{\left|\lambda_5\right|}{1-\left|\lambda_5\right|}.
    \]
    Moreover, using the better estimate $\frac{11}{16}\pi<\arg(\lambda_5)<\frac{45}{64}\pi$, 
    \[
    2\left|1+\lambda_5\right|<2\sqrt{1+\frac{4}{9}-\frac{4}{3}\cos\left(\frac{5\pi}{16}\right)}<\sqrt{2^2+\frac{4}{9}-2\;\frac{4}{3}\cos\left(\frac{19\pi}{64}\right)}
    <\left|2+\lambda_5\right|\]
    implying that
    \[\left|\omega_0-0\right|=\left|\frac{2+\lambda_5}{\lambda_5}\right|>2\left|\frac{1+\lambda_5}{\lambda_5}\right|=r_0+\frac{\left|\lambda_5\right|}{1-\left|\lambda_5\right|}.\]

    \item[$n=1$:]The instar $\widetilde{\mathsf{I}}^1$ is the union of the nodal disks $\widetilde{\mathsf{D}}^{w_0+},\widetilde{\mathsf{D}}^{w_0\mathtt{O}}$, and $\widetilde{\mathsf{D}}^{w_0-}$ where $w_0\in\{+,\mathtt{O},-\}$. Instead of checking that eight more inequalities are satisfied, we use the fact that $\arg(\lambda_5)\in(2\pi/3,23\pi/32)$ to show $0<{\rm Re}(\zeta_1)<{\rm Re}(\zeta)$. Given that $\zeta_1=\frac{1}{\lambda_5}(f_{1+1}(\lambda_5)-f_0(\lambda_5))=1+\lambda_5$ we need to check \[-\frac{1}{\left|\lambda_5\right|}\cos(\arg(\lambda_5))>1+\left|\lambda_5\right|\cos(\arg(\lambda_5))\iff -\cos(\arg(\lambda_5))> \frac{\left|\lambda_5\right|}{1+\left|\lambda_5^2\right|}.\]
    Indeed, $-\cos(\arg(\lambda_5))\in(0.55,0.64)$ and $\tfrac{\left|\lambda_5\right|}{1+\left|\lambda_5^2\right|}\in(0.44,0.47)$. Moreover, we have ${\rm Re}(\zeta_1)>0$ because $\cos(\arg(\lambda_5))>-3/2>-\left|\lambda_5\right|^{-1}$. Consequently, $\omega_1$ is in the first quadrant with ${\rm Re}(\omega_1)>{\rm Re}(\zeta_1)$. It follows that $B_1$ has a chance to intersect only the disks $\widetilde{\mathsf{D}}^{+\mathtt{O}}$ and $\widetilde{\mathsf{D}}^{+-}$. However, in Lemma~\ref{lem:existOneThird} we showed that $\left|1+\lambda_5\right|>2^{-1/2}$, therefore, since $2\left|\lambda_5^3\right|<2^{-1/2}$ we have
    \begin{eqnarray*}
    \left|\omega_1-(1-\lambda_5)\right|&=&2\left|\frac{1+\lambda_5}{\lambda_5}\right|>4\left|\lambda_5^2\right|=r_1+\frac{\left|\lambda_5^2\right|}{1-\left|\lambda_5\right|}.
    \end{eqnarray*}
    so $B_1\cap\widetilde{\mathsf{D}}^{+-}=\emptyset$.
    
    Finally, $B_1$ does not intersect $\widetilde{\mathsf{D}}^{+\mathtt{O}}$ because
    \begin{eqnarray*}
    \left|\omega_1-(1+0\cdot\lambda_5)\right|&=&\left|\frac{-\lambda_5^2+4\lambda_5^3}{\lambda_5}\right|=\left|\lambda_5\right|\left|1-4\lambda_5\right|>\left|\lambda_5\right|\big(1+4\left|\lambda_5\right|\cos(\pi/4)\big)\\
    &>&\left|\lambda_5\right|\frac{8}{3}>4\left|\lambda_5^2\right|=r_1+\frac{\left|\lambda_5^2\right|}{1-\left|\lambda_5\right|}.
    \end{eqnarray*}
    In the first equality we used the fact that $1+\lambda_5=-\lambda_5^2+2\lambda_5^3$ and in the second line that $1+\sqrt{2}\big(\sqrt{
    5}-1\big)>8/3>4\left|\lambda_5\right|$.
    \item[$n=2$:] In this case we prove that $B_2\subset B_1$ which implies that $B_2$ is outside $\widetilde{\mathsf{I}}^2$ because of the containment $\widetilde{\mathsf{I}}^2\subset\widetilde{\mathsf{I}}^1$. Firstly, we have $\left|\omega_1-\omega_2\right|=\left|\lambda_5^2\right|$ and that the radius of $B_1$ is $r_1=4\left|\lambda_5^2\right|-\frac{\left|\lambda_5^2\right|}{1-\left|\lambda_5\right|}>\left|\lambda_5^2\right|$ because $3>(1-\left|\lambda_5\right|)^{-1}$. Secondly, the radius of $B_2$ is  $r_2=2\left|\lambda_5^2\right|-\frac{\left|\lambda_5^3\right|}{1-\left|\lambda_5\right|}$, so for the containment to hold we must have
    \[4\left|\lambda_5^2\right|-\dfrac{\left|\lambda_5^2\right|}{1-\left|\lambda_5\right|}>2\left(2\left|\lambda_5^2\right|-\frac{\left|\lambda_5^3\right|}{1-\left|\lambda_5\right|}\right),\]
    which holds since $2\left|\lambda_5\right|>1$.
\end{itemize}

We have shown that $B_0,B_1$, and $B_2$ do not intersect their respective instar and, therefore, their union lies outside the instar $\widetilde{\mathsf{I}}^2\supset\widetilde{\mathsf{A}}_{\lambda_5}$. We have also proved that $B_2\subset B_1$ so it remains to show that $B_1\cap B_0\neq\emptyset$ and $B_2\cap B_3\neq\emptyset$.

We will prove that $\omega_1\in B_0$, i.e. $\left|\omega_0-\omega_1\right|=\left|\lambda_5\right|<r_0$: recall that $r_0=2\left|\frac{1+\lambda_5}{\lambda_5}\right|-\frac{\left|\lambda_5\right|}{1-\left|\lambda_5\right|}$ and $0.63<\left|\lambda_5\right|<0.64$ then
\[
2\left|1-2\lambda_5\right|>2\big(1+2(0.63)\cos(\pi/4)\big)>\frac{1}{1-(0.64)}+1>\frac{1}{1-\left|\lambda_5\right|}+1
\] 
which implies 
\[2\left|\frac{1+\lambda_5}{\lambda_5}\right|=2\left|\frac{-\lambda_5^2+2\lambda_5^3}{\lambda_5}\right|=2\left|\lambda_5\right|\left|1-2\lambda_5\right|>\left|\lambda_5\right|\left(\frac{1}{1-\left|\lambda_5\right|}+1\right).\]

Finally, we want to show that $\left|\omega_2-\omega_3\right|\left|\lambda_5^3\right|=\left|\lambda_5^3\right|<r_2+r_3$. 
By definition $r_3=2\left|\lambda_5^2\right|\left|1+\lambda_5\right|-\frac{\left|\lambda_5^4\right|}{1-\left|\lambda_5\right|}$ and since 
\begin{eqnarray*}
\left|\lambda_5\right|+\frac{\left|\lambda_5^2\right|}{1-\left|\lambda_5\right|}< 0.639+\frac{(0.639)^2}{1-0.639}&<&1+\sqrt{1+\frac{(\sqrt{5}-1)^2}{4}-(\sqrt{5}-1)\cos\left(\frac{9\pi}{32}\right)}\\
&<&1+\left|1+\lambda_5\right|
\end{eqnarray*}
then
\begin{eqnarray*}
\left|\lambda_5\right|&<&1+\left|1+\lambda_5\right|-\frac{\left|\lambda_5^2\right|}{1-\left|\lambda_5\right|}\\
\iff\left|\lambda_5\right|&<&2+2\left|1+\lambda_5\right|-\left|\lambda_5\right|-\frac{2\left|\lambda_5^2\right|}{1-\left|\lambda_5\right|}\\
&=&2-\frac{\left|\lambda_5\right|}{1-\left|\lambda_5\right|}+2\left|1+\lambda_5\right|-\frac{\left|\lambda_5^2\right|}{1-\left|\lambda_5\right|}\\
\iff\left|\lambda_5^3\right|&<&r_2+r_3.
\end{eqnarray*}
The equality in the third line holds because $\frac{\left|\lambda_5\right|}{1-\left|\lambda_5\right|}=\left|\lambda_5\right|+\frac{\left|\lambda_5^2\right|}{1-\left|\lambda_5\right|}$.

\end{proof}

By Lemma~\ref{lem:outOneThird} the chain $\bigcup_{n\geq0}B_n$ is connected and lies in the complement of $\widetilde{\mathsf{A}}_{\lambda_5}$ (see Figure~\ref{fig:la5chai}). Then, by Theorem~\ref{thm:bdholem}, $\lambda_5\in\partial\mathcal{M}$ is an accessible point. Because $f$ is unique and has no zero coefficients, by Corollary~\ref{cor:sharedBd}, $\lambda_5$ is an accessible point of $\partial\mathcal{M}\cap\partial\mathcal{M}_0$.
\begin{figure}
    \centering
    \includegraphics[scale=0.35, trim=0 100 0 180,clip]{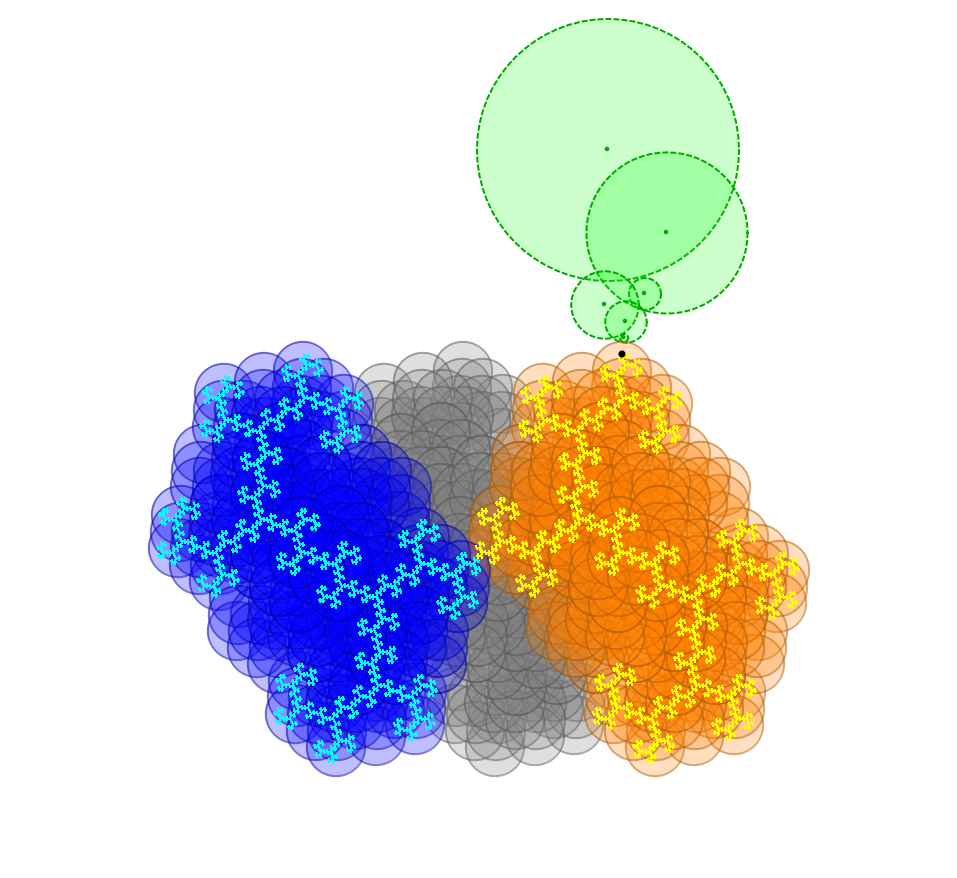}
    \caption{The attractor $\mathsf{A}_{\lambda_5}$ inside $\widetilde{\mathsf{I}}^{5}$. Compare description in Figure~\ref{fig:laChain}. }
    \label{fig:la5chai}
\end{figure}
\end{example}

It must be noted that the uncountable set $\mathcal{T}$ of Theorem \ref{thm:uncountLaZ} contains $\lambda_5\approx-0.366+0.520i$. In fact, such set was found by perturbing the number of repeating coefficients in the power series $f$ and by allowing some of the perturbed repeating coefficients to be $0$. The method Bandt and Hung used to prove that no other power series $g\in\mathcal{P}$ has a root in $\mathcal{T}$, entailed finding a uniform lower bound in a neighborhood of $\mathcal{T}$ on the normalized difference 
\[\frac{g(z)-h(z)}{z^k}=\sum_{j=0}^\infty \epsilon_jz^j,\quad \epsilon_j\in\{-2,-1,0,+1,+2\};~ \epsilon_0\neq0\]
where $k\geq1$ and $h(z)$ is a power series with a root in $\mathcal{T}$. It is unclear whether all parameters in $\mathcal{T}$ are accessible, since the associated itineraries are not necessarily preperiodic and hence our theorems do not apply.

\begin{remark}
We want to emphasize that the assumptions of Theorem~\ref{thm:bdholem} are not meant to be necessary for accessibility. The landmark point $\lambda_6\approx0.57395 + 0.368989 {\rm i}$ satisfies $\mathcal{F}_\lambda=\{1-z+z^3/(1-z)\}$. Based on computer pictures, Bandt \cite{Ba} describes $\lambda_6$ as the ``tip of the largest (visible) spiral of $\mathcal{M}$''. It is likely that $\lambda_6$ is accessible, but even though it is possible to prove that the disks of the chain exist, they are all disconnected and intersect the instar at every level. 
\end{remark}

\section*{Appendix}
We are grateful to the anonymous referees for their comments, in particular for the suggestion to add a table of notation. We include that below.

For completeness, the table makes mention of Hutchinson's operators on $\mathcal{K}$, the space of compact sets in $\mathbb{C}$. The definitions of attractors and instars in terms of these operators are completely analogous to the approach adopted in this text.

Note further that all entries in the table assume a fixed choice of the parameter $\lambda$, except of course the last row, where parameter spaces are defined. 

{\renewcommand{\arraystretch}{1.4}%
\begin{tabular}{|l|c|}
\hline
{\bf Generating maps} &
$\mathfrak{s}_-(z):=-1+\lambda z,\, \mathfrak{s}_{\mathtt{O}}(z):=\lambda z,\, \mathfrak{s}_+(z):=1+\lambda z$ \\
\hline
{\bf Hutchinson operators} &
$S(A) := \mathfrak{s}_-(A) \cup \mathfrak{s}_+(A)$ \\
$S,\widetilde{S}:\mathcal{K} \longrightarrow \mathcal{K}$ &
$\widetilde{S}(A) := \mathfrak{s}_-(A) \cup \mathfrak{s}_0(A)  \cup \mathfrak{s}_+(A)$ \\
\hline
{\bf $2$-attractor} &
 $\mathsf{A}_\lambda:=\left\{\sum_{n=0}^\infty a_n\lambda^n ~\bigg|~a_n\in\{-1,+1\}\right\}$ \\
(\text{fixed point of } $S$) &
$S(\mathsf{A}_{\lambda}) = \mathsf{A}_{\lambda}$ \\
\hline
{\bf $3$-attractor} &
$\widetilde{\mathsf{A}}_\lambda:=\left\{\sum_{n=0}^\infty a_n\lambda^n ~\bigg|~a_n\in\{-1,0,+1\}\right\}$ \\
(\text{fixed point of } $\widetilde{S}$) &
$\widetilde{S}(\widetilde{\mathsf{A}}_{\lambda}) = \widetilde{\mathsf{A}}_{\lambda}$ \\
\hline
{\bf Instars} &
$\mathsf{I}^n := \mathfrak{s}_-(\mathsf{I}^{n-1})\cup\mathfrak{s}_+(\mathsf{I}^{n-1})$ \\
($\mathsf{I}^{-1}, \widetilde{\mathsf{I}}^{-1} := D_R$, where $D_R$ is a &
$ = S^n(\mathsf{I}^{-1})$ \\
disk of radius $R\geq(1-\left|\lambda\right|)^{-1})$ &
$\widetilde{\mathsf{I}}^n := \mathfrak{s}_-(\widetilde{\mathsf{I}}^{n-1})\cup\mathfrak{s}_{\mathtt{O}}(\widetilde{\mathsf{I}}^{n-1})\cup\mathfrak{s}_+(\widetilde{\mathsf{I}}^{n-1})$ \\
 &
$ = \widetilde{S}^n(\widetilde{\mathsf{I}}^{-1})$ \\
\hline
{\bf Overlap set} &
$O_\lambda:=\mathsf{A}_\lambda^-\cap \mathsf{A}_\lambda^+$ \\
\hline
{\bf Power series spaces} &
$\mathcal{P} := \left\{f(z)=\sum_{j=0}^\infty c_jz^j~\bigg|~c_j\in\{-1,0,+1\},~c_0=1\right\}$ \\
 &
$\mathcal{F}_\lambda := \big\{f\in\mathcal{P}~|~ f(\lambda)=0\big\}$ \\
\hline
{\bf Symbolic dynamics} &
$\Sigma^n = \text{ words } \mathtt{\bm w}=a_0\cdots a_{n-1} \text{ in the alphabet } \{-,+\}$ \\
 &
$\Sigma^{\infty} = \text{ infinite words } \mathtt{\bm w'}=a_0\cdots a_{n-1}\cdots$ \\
 &$\mathfrak{s}_{\mathtt{\bm w}}=\mathfrak{s}_{a_0}\circ\ldots\circ\mathfrak{s}_{a_{n-1}}$ \\
 &
$\mathsf{A}_\lambda^\mathtt{\bm w}=\mathfrak{s}_{\mathtt{\bm w}}(\mathsf{A}_\lambda)${\ } (\text{so that }$\mathsf{A}_\lambda=\bigcup_{\mathtt{\bm w}\in\Sigma^n}\mathsf{A}_\lambda^\mathtt{\bm w}$) \\
Projection $\pi_\lambda:\Sigma^\infty\longrightarrow\mathsf{A}_\lambda$ &
$\pi_\lambda(\mathtt{\bm w'}) := \sum_{n=0}^\infty a_n\lambda^n$ \\
 &
($\mathtt{\bm w'}\in\Sigma^\infty$ is the {\em itinerary} of $\pi_\lambda(\mathtt{\bm w'})$ in $\mathsf{A}_\lambda$) \\
\hline
\hline
{\bf Parameter spaces} &
$\mathcal{M}:=\{\lambda\in\mathbb{D}~|~\mathsf{A}_\lambda \mbox{ is connected} \}$ \\
 &
$ = \big\{\lambda\in\mathbb{D}~|~0\in\mathfrak{s}_-(\widetilde{\mathsf{A}}_\lambda)\cap\mathfrak{s}_{\mathtt{O}}(\widetilde{\mathsf{A}}_\lambda)\cap\mathfrak{s}_+(\widetilde{\mathsf{A}}_\lambda)\big\}$ \\
 &
$\mathcal{M}_0:=\{\lambda\in\mathbb{D}~|~0\in\mathsf{A}_\lambda \}$ \\
 &
$ = \big\{\lambda\in\mathbb{D}~|~0\in\mathfrak{s}_-(\mathsf{A}_\lambda)\cap\mathfrak{s}_+(\mathsf{A}_\lambda)\big\}$ \\
\hline
\end{tabular} }

\newpage

\bibliography{SilvestriPerez19_AccessibilityBddThurstonSet}{}

\begin{thebibliography}{BDLW21}

\bibitem[Ban02]{Ba}
Christoph Bandt.
\newblock On the {Mandelbrot} set for pairs of linear maps.
\newblock {\em Nonlinearity}, 15(4):1127--1147, 2002.

\bibitem[BDLW21]{BDLW}
Harrison Bray, Diana Davis, Kathryn Lindsey, and Chenxi Wu.
\newblock The {S}hape of {T}hurston's {M}aster {T}eapot.
\newblock {\em Advances in Mathematics}, 377:107481, 2021.

\bibitem[BH85]{BarHar}
Michael~F Barnsley and Andrew~N Harrington.
\newblock A {M}andelbrot set for pairs of linear maps.
\newblock {\em Physica D: Nonlinear Phenomena}, 15(3):421--432, 1985.

\bibitem[BH08]{BaHu}
Christoph Bandt and Nguyen Hung.
\newblock Self-similar sets with an open set condition and great variety of
  overlaps.
\newblock {\em Proceedings of the American Mathematical Society},
  136(11):3895--3903, 2008.

\bibitem[Bou88]{Bo1}
Thierry Bousch.
\newblock Paires de similitudes, 1988.

\bibitem[Bou92]{Bo2}
Thierry Bousch.
\newblock Connexit{\'e} locale et par chemins h{\"o}lderiens pour les systemes
  it{\'e}r{\'e}s de fonctions, 1992.

\bibitem[BR07]{BaRa}
Christoph Bandt and Hui Rao.
\newblock Topology and separation of self-similar fractals in the plane.
\newblock {\em Nonlinearity}, 20(6):1463--1474, 2007.

\bibitem[CKW17]{CKW}
Danny Calegari, Sarah Koch, and Alden Walker.
\newblock Roots, {S}chottky semigroups, and a proof of {Bandt}'s conjecture.
\newblock {\em Ergodic Theory and Dynamical Systems}, 37(8):2487--2555, 2017.

\bibitem[ERS10]{ERS}
Kemal~Ilgar Ero{\u{g}}lu, Steffen Rohde, and Boris Solomyak.
\newblock Quasisymmetric conjugacy between quadratic dynamics and iterated
  function systems.
\newblock {\em Ergodic Theory and Dynamical Systems}, 30(6):1665--1684, 2010.

\bibitem[HP12]{HaisP}
Peter Ha\"{\i}ssinsky and Kevin~M. Pilgrim.
\newblock Examples of coarse expanding conformal maps.
\newblock {\em Discrete Contin. Dyn. Syst.}, 32(7):2403--2416, 2012.

\bibitem[Lei90]{L}
Tan Lei.
\newblock Similarity between the {M}andelbrot set and {J}ulia sets.
\newblock {\em Communications in mathematical physics}, 134(3):587--617, 1990.

\bibitem[OP93]{OdPo}
Andrew~M. Odlyzko and Bjorn Poonen.
\newblock Zeros of polynomials with 0,1 coefficients.
\newblock {\em L'Enseign. Math.}, 39:317--348, 1993.

\bibitem[Sol05]{Sol}
Boris Solomyak.
\newblock On the '{M}andelbrot set' for pairs of linear maps: asymptotic
  self-similarity.
\newblock {\em Nonlinearity}, 18(5):1927--1943, 2005.

\bibitem[Thu14]{Th}
William Thurston.
\newblock Entropy in dimension one.
\newblock {\em Frontiers in Complex Dynamics: In Celebration of John Milnor's
  80th Birthday}, pages 339--384, 02 2014.

\bibitem[Tio20]{Ti}
Giulio Tiozzo.
\newblock Galois conjugates of entropies of real unimodal maps.
\newblock {\em International Mathematics Research Notices}, 2:607--640, 2020.

\end{thebibliography}
\bibliographystyle{alpha}
\end{document}